\newtheorem{theorem}{Theorem}
\newtheorem{lemma}{Lemma}
\newtheorem{remark}{Remark}
\newtheorem{example}{Example}
\newtheorem{proposition}[theorem]{Proposition}
\newtheorem{definition}{Definition}
\newtheorem{corollary}[theorem]{Corollary}
\numberwithin{equation}{section}
\numberwithin{theorem}{section}
\numberwithin{lemma}{section}
\numberwithin{example}{section}
\numberwithin{definition}{section}
\title{Stability and convergence analysis of a class of continuous piecewise polynomial approximations for time fractional differential equations}
\author{%
{\sc
Han Zhou\thanks{Corresponding author: h.zhou@uu.nl}} \\[2pt]
Department of Mathematics, Utrecht University, The Netherlands\\[6pt]
{\sc and}\\[6pt]
{\sc Paul Andries Zegeling}\thanks{Email: p.a.zegeling@uu.nl}  \\
Department of Mathematics, Utrecht University, The Netherlands
}
\begin{document}
\maketitle
\section{Introduction}
Fractional calculus, as a generalization of ordinary calculus, has been an intriguing topic for many famous mathematicians since the end of the 17th century. During the last four decades, many scholars have been working on the development of theory for fractional derivatives and integrals, found their way in the world of fractional calculus and their applications. For more detailed information on the historical background, we refer the interested reader to the following books: \cite{Oldhamspanier:1974, Samko:1993, MillerB:1993, Podlubny:1999, Hilfer:2000, Kilbas:2006, Baleanu:2011} and \cite{Herrmann:2014}. Differential equations possessing terms with fractional derivatives in the space- or time- or space-time direction have become very important in many application areas. Particularly, in recent years a huge amount of interesting and surprising fractional models have been proposed. Here, we mention just a few typical applications: in the theory of Hankel transforms \cite{Erdelyi:1940}, in financial models \cite{Scalas:2000, Wyss:2000}, in elasticity theory \cite{Bagleytorvik:1983}, in medical applications \cite{Santamaria:2006, Langlands:2009}, in geology \cite{Benson:2000, Liu:2003}, in physics \cite{Carpinteri:1997,Barkai:2000, Metzler:2000} and many more.

Similar to the work for ordinary differential equations, that has started more than a century earlier, research on numerical methods for time fractional differential equations (tfDEs) started its development. In this paper we consider approximations to tfDEs involving Caputo fractional derivatives of order $0 < \alpha < 1$ in the form of
\begin{equation}
\label{eq:nolinfode}
{^{C}}D^{\alpha}u(t)=f(t,u(t)),\hspace{0.618cm} t\in (0, T]
\end{equation}
with prescribed initial condition $u(0)=u_{0}$. According to \cite{DiethelmK:2004}, it holds that if function $f(t, u(t))$ is continuous and satisfies the Lipschitz condition with respect to the second variable, the problem \eqref{eq:nolinfode} then possesses a unique solution $u(t)\in C([0, T])$. In terms of the numerical approximation of formula \eqref{eq:nolinfode}, we mainly aim at the numerical discretisation to the Caputo fractional derivative, the definition of which we refer to Definition \ref{def:Caputoderiv} in the next section. It is observed that the Caputo fractional derivative of a well-behaved function is an operator combined with the integer-order derivative and the fractional integral, which can be regarded as a convolution of the weakly singular kernel $t^{-\beta} (0<\beta<1)$ and a function. The research on numerical approximations to fractional integral was developed in numerically solving a type of Volterra integral equation
\begin{equation}
\label{eq:nolinfode1}
u(t)=u_{0}+\frac{1}{\Gamma(\alpha)}\int_{0}^{t}(t-\xi)^{\alpha-1}f(\xi, u(\xi))\mathrm{d}\xi,
\end{equation}
which is an equivalent form of \eqref{eq:nolinfode} when $f$ satisfies the Lipschitz continuous condition. With respect to numerical approximation for \eqref{eq:nolinfode1}, two general approaches are proposed respectively, i.e, the product integration method and fractional linear multistep methods. In these cases, a general discretized formula to \eqref{eq:nolinfode1} is written as
\begin{equation}
u_{n}=u_{0}+(\Delta t)^{\alpha}\sum_{j=0}^{n}\omega_{n-j}f(t_{j}, u_{j})+(\Delta t)^{\alpha}\sum_{j=0}^{k-1}w_{n,j}f(t_{j}, u_{j}), \hspace{0.5cm} n\ge k.
\end{equation}
The fractional linear multistep methods was originally proposed in \cite{LubichC:1986a} in the mid eighties of the last century. This type of methods is devoted to constructing the power series generated by the convolution quadrature weights $\{\omega_{j}\}_{j=0}^{\infty}$ based on classical implicit linear multistep formulae $(\rho, \sigma)$ with the following relationship 
\begin{equation*}
\sum_{j=0}^{\infty}\omega_{j}\xi^{j}=\left(\frac{\sigma(1/\xi)}{\rho(1/\xi)}\right)^{\alpha}.
\end{equation*}
For the motivation behind this idea we refer to \cite{LubichC:1988a}. For this type of methods, the accuracy and stability properties highly benefit from those of the corresponding multistep methods \cite{LubichC:1986b, Lubich:1985}. Another more straightforward approach to generate the weights $\{\omega_{j}\}$ and $\{w_{n,j}\}$ is based on product quadrature method applied to the underlying fractional integral, for examples, to replace the integrand $f(\xi, u(\xi))$ by the piecewise Lagrange interpolation polynomials of degree $k (k\ge 0)$, and approximate the corresponding fractional integral in \eqref{eq:nolinfode1}. On the accuracy and efficiency of this class of methods to a type of nonlinear Volterra integral equation with irregular kernel, we can refer to \cite{Linz:1969, HoogR:1974, CameronS:1984, DixonJ:1985} and recently \cite{Baleanu:2011, LiZeng:2015}.
Other useful approaches such as collocation methods on non-smooth solution are discussed in \cite{Brunner:1985, BrunnerH:1986}. In addition, a series of other approaches were developed such as in \cite{Garappa:2013}, exponential integrators are applied to fractional order problems. Generalized Adams methods and so-called $m$-steps methods are utilized by \cite{Aceto:2014, Aceto:2015}. 

In contrast to previous methods, the numerical approach to solve tfDEs in this paper is arrived at through directly approximating fractional derivative combining with numerical differentiation and integration. Recently, a new type of numerical schemes was designed to approximate the Caputo fractional derivative for solving time fractional partial differential equations, such as $L1$ method \cite{LinX:2007}, $L$1-2 method \cite{GaoSZ:2014}, $L$2-$1_\sigma$ method \cite{Alikhanov:2015}. These methods are all based on piecewise linear or quadratic interpolating polynomials approximation. It is natural to generalise the approach by improving the degree of the piecewise polynomial to approximate function that possesses suitable smoothness, in which situation the higher order of accuracy can be obtained. In the next section, we will devote to deriving a series of numerical schemes for formula \eqref{eq:nolinfode} based on constructing piecewise interpolation polynomials on interval $[0, t]$ as the approximations to solution $u(t)$, and consequently, the $\alpha$ order Caputo derivative of the polynomials as the approximation to ${^{C}}D^{\alpha}u(t)$. The local truncation errors of the numerical schemes are discussed correspondingly. The flexibility via choosing different interpolating points on subintervals to construct the piecewise polynomials will produce various schemes under the similar restriction of accuracy order.

In order to study the numerical stability of such methods applying to problem \eqref{eq:nolinfode}, we will examine the behaviour of the numerical method on the linear scalar equation
\begin{equation}
\label{eq:testeqC}
{^{C}}D^{\alpha}u(t)=\lambda u(t), \hspace{0.5cm} \lambda\in\mathbb{C}
\end{equation}
with initial value $u(0)=u_{0}$. It is already shown that the solution of \eqref{eq:testeqC} satisfies that $u(t)\to 0$ as $t\to +\infty$ provided that $|\mathrm{arg}(\lambda)|>\frac{\alpha\pi}{2}~ (-\pi\le \mathrm{arg}(\lambda)\le \pi)$ for arbitrary bounded initial value \cite{LubichC:1986b, MatignonD:1996},
accordingly, it can be studied in seeking those $\lambda$ for which the corresponding numerical solutions preserve the same property as true solution. In fact, several classical numerical stability theories have been constructed on solving problem \eqref{eq:testeqC} in the case of $\alpha=1$ \cite{HairerWN:1991,HairerWN:1987}. Furthermore, there are some efforts on generalising the numerical stability theory on linear multistep methods to integral equations,  such as Volterra-type integral equation \cite{LubichC:1983, LubichC:1986b}. It is known that, for example, in the case of all those $\lambda$ satisfying $|\mathrm{arg}(\lambda)|>\frac{\alpha\pi}{2}~(0<\alpha\le 1)$, if the numerical solution has the same asymptotical stability property as true solution, the numerical method is called $A$-stable, and in other case of $\lambda$ lying in the sector with $|\mathrm{arg}(\lambda)|\ge \theta~(\frac{\alpha\pi}{2}<|\theta|\le \pi)$, it is referred to as $A(\theta)$-stable. Inspired by the successive work, we make use of the technique pioneered in \cite{LubichC:1986b}, specialise and refine the results to the fractional case in this paper. We confirm the stability regions and strong stability of the proposed numerical methods, and provide the rigorous analysis on the $A(\frac{\pi}{2})$-stability of some methods. Actually, it can be observed from the numerical experiments that the class of methods possesses the property of $A(\theta)$-stability uniformly for $0<\alpha<1$, and for some specific $\alpha\in (0,1)$, the $A$-stability can be obtained.

The paper is organized as follows. Section \ref{piecewise} introduces the continuous piecewise polynomial approximations of the Caputo derivative. We also derive some useful properties of the weight coefficients and discuss the local truncation errors. Section \ref{stabsection} and Section \ref{conversection} respectively treat the stability and convergence aspects of the numerical schemes when applied to time fractional differential equations. In section \ref{numexpsection} numerical experiments confirm our theoretical considerations with respect to order of convergence and stability restrictions.

\section{Continuous piecewise polynomial approximation to the Caputo fractional derivative}
\label{piecewise}
We first introduce the fractional derivative in the Caputo sense:
\begin{definition}[\cite{DiethelmK:2004}]\label{def:Caputoderiv}
Let $\alpha>0$, and $n=\lceil \alpha \rceil$, the $\alpha$ order Caputo derivative of function $u(t)$ on $[0,T]$ is defined by
\begin{equation}
{^{C}}D^{\alpha}u(t)=\frac{1}{\Gamma(n-\alpha)}\int_{0}^{t}\frac{u^{(n)}(\xi)}{(t-\xi)^{\alpha-n+1}}\mathrm{d}\xi
\end{equation}
whenever $u^{(n)}(t)\in L^{1}[0, T]$. In particular, the Caputo derivative of order $\alpha\in(0,1)$ is defined by
\begin{equation}
\label{eq:Cderiv}
{^C}D^{\alpha} u(t)=\frac{1}{\Gamma(1-\alpha)}\int_{0}^{t}(t-\xi)^{-\alpha}u^{(1)}(\xi)\mathrm{d}\xi
\end{equation}
whenever $u^{(1)}(t)\in L^{1}[0, T]$.
\end{definition}
In the sequel, we will derive a class of piecewise polynomial approximations to the Caputo fractional derivative of order $\alpha\in(0,1)$. The main idea is as follows. 
Let $I=[0, T]$ be an interval, the $M+1$ nodes $\{t_{i}\}_{i=0}^{M}$ define a partition
\begin{equation}\label{par}
0=t_{0}<t_{1}\cdots<t_{M-1}<t_{M}=T.
\end{equation}
If the solution $u(t)$ in \eqref{eq:Cderiv} is assumed to be continuous on the interval $I$, when we think about an piecewise polynomial approximation to $u(t)$, it is reasonable to find the approximate solution at least in the continuous piecewise polynomial space, which is defined by
\begin{equation*}
C_{p}(I)=\{v(t)\in C(I): ~~v(t) \text{ is a polynomial on each subinterval } I_{j}=[t_{j-1}, t_{j}]\}.
\end{equation*}
Specifically, denoting the space of continuous piecewise polynomial of degree at most $k$ by 
\begin{equation*}
C_{p}^{k}(I)=\{v(t)\in C(I):~~ v(t)=\sum_{l=0}^{k}a_{j,l}t^{l} \text{ on } I_{j}\},
\end{equation*}
we then construct a class of approximate solutions of $u(t)$ in the space $C_{p}^{k}(I)$. Here the Lagrange interpolation technique by prescribing interpolation conditions on distinct $k+1$ nodes is made use of such that  the coefficients $\{a_{j,l}\}$ for $0\le l\le k$ on each $I_{j}$ are uniquely determined. In addition, suppose that $u(t)$ is not a constant function, we only need to focus on the cases of $k\ge 1$ in view of the continuity restriction. The choice of the interpolating points is provided in the following way. 

We define a class of polynomials $p_{j, q}^{k}(t)$ which are of degree $k\ge 1$ and have a compact support $I_{j}$. The coefficients of the polynomials are uniquely determined by the following $k+1$ interpolation conditions
\begin{equation}\label{eq:interpcond}
p_{j, q}^{k}(t_{n})=u(t_{n}),\hspace{0.618cm} n=j+q-1, j+q-2,\cdots, j+q-k-1.
\end{equation}
Here the index $q$ records the number of shifts of the $k+1$ interpolating nodes $\{t_{n}\}_{n=j-1-k}^{j-1}$, and the sign of $q$ indicates the direction of the shift. Based on \eqref{eq:interpcond}, the polynomials can be represented by a Lagrange form 
\begin{equation}\label{eq:pjqk}
p_{j, q}^{k}(t)=\sum_{n=j+q-k-1}^{j+q-1}\prod_{m=j+q-k-1 \atop m\ne n}^{j+q-1}\frac{t-t_{m}}{t_{n}-t_{m}} u(t_{n}).
\end{equation}
In particular, if the partition \eqref{par} is equidistant, i.e., $t_{n}=n\Delta t$ and $\Delta t=\frac{T}{M}$ as $M\in \mathbb{N}^{+}$, the alternative Newton expression is given by 
\begin{equation}\label{eq:piqk}
p_{j, q}^{k}(t)=\sum_{n=0}^{k}\frac{\nabla^{n}u(t_{j+q-1})}{n!(\Delta t)^{n}}\prod_{l=0}^{n-1}(t-t_{j+q-1-l}).
\end{equation}
For the convenience of notation, we then rewrite \eqref{eq:piqk} by changing the variable $t=t_{j-1}+s\Delta t$ to obtain  
\begin{equation}
\label{eq:pk}
p_{j,q}^{k}(t)=p_{j,q}^{k}(t_{j-1}+s\Delta t)=\sum_{r=0}^{k}\binom{s-q+r-1}{r}\nabla^{r}u(t_{j+q-1}),
\end{equation}
where the $r$-th order backward difference operator $\nabla^{r}$ is commonly defined by
\begin{equation*}
\nabla^{0} u(t_{i})=u(t_{i}),\hspace{0.4cm} \nabla^{r}u(t_{i})=\nabla^{r-1}u(t_{i})-\nabla^{r-1}u(t_{i-1})
\end{equation*}
and $\binom{s-q+r-1}{r}$ is the binomial coefficient. 

In the following, we construct a class of approximate solution $P_{i}^{k}(t)\in C_{p}^{k}(I)$ to $u(t)$ on the uniform grid for $1\le i\le k\le 6$. The general representations are proposed by
\begin{equation}\label{eq:Pik}
P_{i}^{k}(t)=\sum_{j=1}^{k-i}p_{j,k-j}^{k-1}(t)+\sum_{j=k}^{n}p_{j-i+1,i}^{k}(t)+\sum_{j=n-i+2}^{n}p_{j,n+1-j}^{k}(t)
\end{equation}
for $t\in (t_{n-1}, t_{n}]$ and $1\le n\le M$, where $\sum_{j=1}^{k-i}p_{j,k-j}^{k-1}(t)=0$ and $\sum_{j=n-i+2}^{n}p_{j,n+1-j}^{k}(t)=0$ if $k-i<1$ and $n-i+2>n$, respectively. 
\begin{remark}\label{re:1}
The construction of polynomials $P_{i}^{k}(t)$ is mainly based on the continuity requirement on interval $I$, i.e., the interpolation conditions 
\begin{equation}
\label{eq:2.4}
p_{j,q}^{k}(t_{n})=u(t_{n}),\hspace{0.618cm} n=j-1,\hspace{0.1cm} j
\end{equation}
should be satisfied. It yields that on each subinterval $I_{j}$, according to \eqref{eq:interpcond}, both conditions $j+q-1\ge j$ and $j+q-k-1\le j-1$ should be satisfied, which indicates $1\le q\le k$. Therefore, in the case of $k=1$, there is a unique continuous piecewise linear polynomial, denoted by $P_{1}^{1}(t)$, in the space $C_{p}^{1}(I)$.  According to \eqref{eq:Pik}, it is expressed by
\begin{equation*}\label{eq:P11}
P_{1}^{1}(t)=\sum_{j=1}^{n}p_{j,1}^{1}(t),
\end{equation*}
In the other case of $k=2$, there are three options on each $I_{j}$, that is, $p_{j,1}^{1}(t)$, $p_{j,1}^{2}(t)$ and $p_{j,2}^{2}(t)$ to constitute the interpolating polynomial that belongs to space $C_{p}^{2}(I)$. It is known that the construction of $P^{2}(t)$ is therefore not unique. In order to preserve the convolution property as much as possible, here we propose three available continuous piecewise polynomials in the forms of 
 \begin{equation}\label{eq:P2}
P_{1}^{2}(t)=p_{1,1}^{1}(t)+\sum_{j=2}^{n}p_{j,1}^{2}(t),\hspace{0.618cm} P_{2}^{2}(t)=\sum_{j=1}^{n-1}p_{j,2}^{2}(t)+p_{n,1}^{2}(t)
\end{equation}
and 
 \begin{equation*}\label{eq:P23}
P_{3}^{2}(t)=p_{1,2}^{2}(t)+\sum_{j=2}^{n}p_{j,1}^{2}(t)
\end{equation*}
when $t\in (t_{n-1}, t_{n}]$. In addition, as shown in \eqref{eq:Pik}, we restrict our further discussion to the case $i\le k$, and it is because in that situation, the corresponding discretized operators $D_{k,i}^{\alpha}u_{n}$ defined in \eqref{Dki1} can be computed recursively when the least starting values are prescribed.
\end{remark}
As a consequence, the operator
\begin{equation}
\label{Dki}
D_{k,i}^{\alpha}u(t)=\frac{1}{\Gamma(1-\alpha)}\int_{0}^{t}(t-\xi)^{-\alpha}\frac{\mathrm{d} P_{i}^{k}}{\mathrm{d} \xi}\mathrm{d}\xi
\end{equation}
is proposed on $t\in I$ as the approximations to ${^{C}}D^{\alpha}u(t)$. In the case of $t=t_{n}$, formula \eqref{Dki} can also be rewritten as 
\begin{equation}
\label{Dki1}
\begin{split}
D_{k,i}^{\alpha}u_{n}&=\frac{(\Delta t)^{-\alpha}}{\Gamma(1-\alpha)}\sum_{j=1}^{n}\int_{0}^{1}(n-j+1-s)^{-\alpha}\frac{\mathrm{d} P_{i}^{k}(t_{j-1}+s\Delta t)}{\mathrm{d}s}\mathrm{d}s \\
&=(\Delta t)^{-\alpha}\sum_{j=0}^{k-1}w_{n,j}^{(k,i)}u_{j}+(\Delta t)^{-\alpha}\sum_{j=0}^{n}\omega_{n-j}^{(k,i)}u_{j},
\end{split}
\end{equation}
where $u_{n}:=u(t_{n})$. 

In the following part, we will present the explicit representations of the weight coefficients $\{w_{n,j}^{(k,i)}\}$ and $\{\omega_{j}^{(k,i)}\}$ when $1\le i\le k\le 3$ as examples. First, define that
\begin{equation}
\label{ljqr}
I_{n,q}^{r}=\left\{
\begin{array}{cl}
\frac{1}{\Gamma(1-\alpha)}\int\limits_{0}^{1}(n+1-s)^{-\alpha}\mathrm{d}\binom{s-q+r-1}{r},& n\ge 0, \\
0, & n<0,  \\
\end{array}
\right. 
\end{equation}
where $q, r\in\mathbb{N}^{+}$ and $n\in\mathbb{Z}$. In addition, note that
\begin{equation*}
\begin{split}
& I_{n}:=I_{n,q}^{1},\hspace{0.3cm} \forall~q=1,2,\cdots,    \\
&\nabla^{k}I_{n,q}^{r}=\nabla^{k-1}I_{n,q}^{r}-\nabla^{k-1}I_{n-1,q}^{r}, \hspace{0.3cm} \forall~k\in\mathbb{N}^{+}.
\end{split}
\end{equation*}
Then the weight coefficients can be expressed in terms of integrals $I_{n,q}^{r}$ by 
\begin{equation*}
\left\{
\begin{split}
(k,i)=(1,1): &\hspace{0.2cm}w_{m,0}=-I_{m},\hspace{0.3cm} m\ge 1,  \hspace{0.4cm} \omega_{n}=\nabla I_{n},\hspace{0.3cm}n\ge 0,       \\
(k,i)=(2,1): &\hspace{0.2cm}w_{m,0}=2I_{m-1,1}^{2}-I_{m,1}^{2}-I_{m}, \hspace{0.4cm}  w_{m,1}=-I_{m-1,1}^{2},\hspace{0.2cm}m\ge 2,      \\
                 &\hspace{0.2cm}\omega_{n}=\nabla I_{n}+\nabla^{2} I_{n,1}^{2},\hspace{0.3cm} n\ge 0,    \\     
(k,i)=(2,2): &\hspace{0.2cm}w_{m,0}=-\nabla I_{m+1,1}^{2}+I_{m,2}^{2}, \hspace{0.4cm}w_{m,1}=-I_{m,1}^{2},\hspace{0.3cm} m\ge 2,        \\
                 &\hspace{0.2cm}\omega_{0}=I_{0}+I_{1}+I_{0,1}^{2}+I_{1,2}^{2},\hspace{0.4cm}\omega_{1}=\nabla I_{2}-I_{0}+I_{2,2}^{2}-2I_{0,1}^{2}-2I_{1,2}^{2},     \\
                 &\hspace{0.2cm}\omega_{2}=\nabla I_{3}+\nabla^{2}I_{3,2}^{2}+I_{0,1}^{2},  \hspace{0.4cm} \omega_{n}=\nabla I_{n+1}+\nabla^{2} I_{n+1,2}^{2},\hspace{0.2cm}n\ge 3,     \\
\end{split}
\right.
\end{equation*}
and by more complicated formulae of forms
\begin{description}
\item [i).]  $(k,i)=(3,1)$,
\begin{equation}\label{eq:ki31}
\left\{
\begin{split}
                &w_{m,0}=-\nabla I_{m}-I_{m,1}^{2}+2I_{m-1,1}^{2}+I_{m-1,2}^{2}-I_{m,1}^{3}+3I_{m-1,1}^{3}-3I_{m-2,1}^{3},\\
                &w_{m,1}=-2I_{m-1}-2I_{m-1,2}^{2}-I_{m-1,1}^{2}-I_{m-1,1}^{3}+3I_{m-2,1}^{3},\\
                &w_{m,2}=I_{m-1}+I_{m-1,2}^{2}-I_{m-2,1}^{3},\hspace{0.5cm} m\ge 3, \\
                &\omega_{n}=\nabla I_{n}+\nabla^{2}I_{n,1}^{2}+\nabla^{3}I_{n,1}^{3}, \hspace{0.5cm} n\ge 0, \\
\end{split}
\right.
\end{equation}
 \item [ii).] $(k,i)=(3,2)$,
\begin{equation}\label{eq:ki32}
\left\{
\begin{split}
&w_{m,0}=-\nabla I_{m+1}-I_{m+1,2}^{2}+2I_{m,2}^{2}-I_{m+1,2}^{3}+3I_{m,2}^{3}-3I_{m-1,2}^{3},\\
&w_{m,1}=-I_{m}-I_{m,2}^{2}-I_{m,2}^{3}+3I_{m-1,2}^{3},\\
&w_{m,2}=-I_{m-1,2}^{3},\hspace{0.3cm}m\ge 3,       \\
&\omega_{0}=I_{0}+I_{1}+I_{1,2}^{2}+I_{0,1}^{2}+I_{1,2}^{3}+I_{0,1}^{3}, \\
&\omega_{1}=\nabla I_{2}-I_{0}+I_{2,2}^{2}-2I_{1,2}^{2}-2I_{0,1}^{2}+I_{2,2}^{3}-3I_{1,2}^{3}-3I_{0,1}^{3},  \\
&\omega_{2}=\nabla I_{3}+\nabla^{2}I_{3,2}^{2}+I_{0,1}^{2}+I_{3,2}^{3}-3I_{2,2}^{3}+3I_{1,2}^{3}+3I_{0,1}^{3}, \\
&\omega_{3}=\nabla I_{4}+\nabla^{2}I_{4,2}^{2}+\nabla^{3}I_{4,2}^{3}-I_{0,1}^{3},       \\
&\omega_{n}=\nabla I_{n+1}+\nabla^{2}I_{n+1,2}^{2}+\nabla^{3}I_{n+1,2}^{3}, \hspace{0.3cm} n\ge 4,\\
\end{split}
\right.
\end{equation}
\item [iii).]$(k,i)=(3,3)$,
\begin{equation}\label{eq:ki33}
\left\{
\begin{split}
&w_{m,0}=-\nabla I_{m+2}-\nabla^{2} I_{m+2,3}^{2}-I_{m+2,3}^{3}+3I_{m+1,3}^{3}-3I_{m,3}^{3},\\
&w_{m,1}=-\nabla I_{m+1}-I_{m+1,3}^{2}+2I_{m,3}^{2}-I_{m+1,3}^{3}+3I_{m,3}^{3},\\
&w_{m,2}=-I_{m}-I_{m,3}^{2}-I_{m,3}^{3},\hspace{0.3cm}m\ge 3,    \\
&\omega_{0}=I_{0}+I_{1}+I_{2}+I_{0,1}^{2}+I_{1,2}^{2}+I_{2,3}^{2}+I_{0,1}^{3}+I_{1,2}^{3}+I_{2,3}^{3}, \\
&\omega_{1}=\nabla I_{3}-I_{0}-I_{1}+I_{3,3}^{2}-2I_{2,3}^{2}-2I_{1,2}^{2}-2I_{0,1}^{2}+I_{3,3}^{3}-3I_{2,3}^{3}-3I_{1,2}^{3}-3I_{0,1}^{3},\\
&\omega_{2}=\nabla I_{4}+\nabla^{2}I_{4,3}^{2}+I_{1,2}^{2}+I_{0,1}^{2}+I_{4,3}^{3}-3I_{3,3}^{3}+3I_{2,3}^{3}+3I_{0,1}^{3}+3I_{1,2}^{3}, \\
&\omega_{3}=\nabla I_{5}+\nabla^{2} I_{5,3}^{2}+\nabla^{3} I_{5,3}^{3}-I_{1,2}^{3}-I_{0,1}^{3},\\
&\omega_{n}=\nabla I_{n+2}+\nabla^{2}I_{n+2,3}^{2}+\nabla^{3}I_{n+2,3}^{3}, \hspace{0.3cm}n\ge 4.\\
\end{split}
\right.
\end{equation}
\end{description}
In addition, it is observed that when $\alpha\to1$, the difference operator $D_{k,i}^{\alpha}u_{n}$ in $\eqref{Dki1}$ recovers to the $k$-step BDF method.
\begin{remark}
The construction process of operator \eqref{Dki} can be extended to the case of $\alpha>1$ as well. In a general case of $\lceil \alpha \rceil-1<\alpha<\lceil \alpha \rceil$, the interpolating polynomials $P_{i}^{k}(t)\in C_{p}^{k}(I)$ could be constructed as the approximations to $u(t)$ under the condition that $k\ge \lceil \alpha \rceil$, and the $\alpha$ order Caputo derivative of $P_{i}^{k}(t)$ are proposed in an analogous way by
\begin{equation*}
D_{k,i}^{\alpha}u(t)=\frac{1}{\Gamma(\lceil \alpha \rceil-\alpha)}\int_{0}^{t}(t-\xi)^{-\alpha+\lceil \alpha \rceil-1}\frac{\mathrm{d^{\lceil \alpha \rceil}} P_{i}^{k}}{\mathrm{d} \xi^{\lceil \alpha \rceil}}\mathrm{d}\xi 
\end{equation*}
as the approximation to ${^{C}}D^{\alpha}u(t)$. Here the condition of $k\ge \lceil \alpha \rceil$ is required such that the $\lceil \alpha \rceil$ order derivative of $P_{i}^{k}(t)$ is nonzero a.e.. We take the case of $\lceil \alpha \rceil=2$ as an example. Assume that $\beta=\alpha-1\in (0,1)$, the polynomials $P_{i}^{2}(t)$ denoted by \eqref{eq:P2} are in the space $C_{p}^{2}(I)$, and it follows
\begin{equation*}
\begin{split}
D_{2,i}^{\alpha}u(t_{n})=&\frac{1}{\Gamma(1-\beta)}\sum_{j=1}^{n}\int_{t_{j-1}}^{t_{j}}(t_{n}-\xi)^{-\beta}\frac{\mathrm{d^2} P_{i}^{2}}{\mathrm{d} \xi^2}\mathrm{d}\xi \\
                     =&\frac{(\Delta t)^{-\alpha}}{\Gamma(1-\beta)}\sum_{j=1}^{n}\int_{0}^{1}(n-j+1-s)^{-\beta}\frac{\mathrm{d}^{2} P_{i}^{2}(t_{j-1}+s\Delta t)}{\mathrm{d}s^{2}}\mathrm{d}s,
\end{split}
\end{equation*}
where the last equality holds based on the relation $\frac{\mathrm{d}^{2} P_{i}^{k}(\xi(s))}{\mathrm{d}s^{2}}=\frac{\mathrm{d^2} P_{i}^{k}(\xi)}{\mathrm{d} \xi^2}\left(\frac{\mathrm{d}\xi}{\mathrm{d}s}\right)^{2}+\frac{\mathrm{d} P_{i}^{k}(\xi)}{\mathrm{d}\xi}\frac{\mathrm{d}^{2}\xi}{\mathrm{d}s^{2}}$. 
Moreover, it can be rewritten as a form analogous to \eqref{Dki1}, and the corresponding weights coefficients $\{w_{n,i}\}$ and $\{\omega_{n}\}$ are therefore derived by
\begin{equation}\left\{
\begin{split}
&\omega_{j}^{(2,1)}=\nabla^{2} I_{j},\hspace{0.4cm} j\ge 0,\hspace{0.4cm} w_{n,0}^{(2,1)}=-\left(I_{n}-2I_{n-1}\right), \hspace{0.4cm}w_{n,1}^{(2,1)}=-I_{n-1},\hspace{0.4cm}n\ge 2, \\
&\omega_{0}^{(2,2)}=I_{0}+I_{1},\hspace{0.5cm}\omega_{1}^{(2,2)}=I_{2}-2I_{1}-2I_{0}, \hspace{0.5cm} \omega_{2}^{(2,2)}=\nabla^{2}I_{3}+I_{0},   \\
&\omega_{j}^{(2,2)}=\nabla^{2}I_{j+1}, \hspace{0.4cm}j\ge 3, \hspace{0.5cm} w_{n,0}^{(2,2)}=-I_{n+1}+2I_{n}, \hspace{0.4cm} w_{n,1}^{(2,2)}=-I_{n},\hspace{0.4cm}n\ge 2.
\end{split}
\right.
\end{equation}
Here the integrals $I_{n,q}^{r}=I_{n,q}^{r}(\beta)$ are defined by \eqref{ljqr} where the index $\alpha$ is replaced by $\beta$. 
\end{remark}

\subsection{Complete monotonicity and error analysis}
First, we explore the completely monotonic property of the sequence $\{I_{n,q}^{r}\}_{n=0}^{\infty}$.
\begin{lemma}
\label{le:Ijqr}
Assume that $I_{n,q}^{r}$ is defined by \eqref{ljqr}, then for $n\ge k$ with $k\in \mathbb{N}$, it holds that
\begin{equation}
\label{eq:r<q}
(-1)^{k+r+1}\nabla^{k} I_{n,q}^{r}\ge 0
\end{equation}
in the case of $r\le q$, and
\begin{equation}
\label{eq:r>q}
(-1)^{k+q+1}\nabla^{k} I_{n,q}^{r}\ge 0
\end{equation}
in the case of $ r>q$.
\end{lemma}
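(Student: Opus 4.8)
The plan is to differentiate the \emph{singular kernel} rather than the polynomial factor, and then to let the behaviour of the interpolation weight $b(s):=\binom{s-q+r-1}{r}$, a degree-$r$ polynomial in $s$, dictate the argument according to whether its roots lie to the right of $[0,1]$ or straddle it. First I would note that for $n\ge k$ all the indices $n,n-1,\dots,n-k$ are nonnegative, so the truncation $I_{n,q}^{r}=0$ for $n<0$ never enters and $\nabla^{k}$ may be moved inside the integral: writing $h_{\gamma}(x):=(x+1-s)^{-\gamma}$,
\[
\nabla^{k} I_{n,q}^{r}=\frac{1}{\Gamma(1-\alpha)}\int_{0}^{1}\big(\nabla^{k}_{n}h_{\alpha}(n)\big)\,b'(s)\,\mathrm{d}s .
\]
The auxiliary fact I would isolate is that $(-1)^{k}\nabla^{k}_{n}h_{\gamma}(n)\ge 0$ for every $s\in(0,1)$ and every $\gamma>0$. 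This follows from the representation $\nabla^{k}h_{\gamma}(n)=\int_{[0,1]^{k}}h_{\gamma}^{(k)}(n-u_{1}-\cdots-u_{k})\,\mathrm{d}u$ together with $h_{\gamma}^{(k)}(x)=(-1)^{k}\gamma(\gamma+1)\cdots(\gamma+k-1)(x+1-s)^{-\gamma-k}$, whose sign is constantly $(-1)^{k}$ on the relevant range (since $n-\sum u_{i}\ge n-k\ge 0$).

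In the case $r\le q$ I would argue directly, \emph{without} integrating by parts. The roots of $b$ are $q,q-1,\dots,q-r+1$, all $\ge q-r+1\ge 1$, so $[0,1]$ lies to the left of every root; by Rolle's theorem the roots of $b'$ interlace those of $b$ and therefore also exceed $q-r+1$, whence $b'$ has no zero on $[0,1)$ and keeps the constant sign it carries as $s\to-\infty$, namely $(-1)^{r+1}$ (read off from the leading term $s^{r-1}/(r-1)!$). Multiplying the two constant signs, the integrand has sign $(-1)^{k}(-1)^{r+1}=(-1)^{k+r+1}$ throughout $(0,1)$, which gives $(-1)^{k+r+1}\nabla^{k}I_{n,q}^{r}\ge 0$, i.e. \eqref{eq:r<q}.

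In the case $r>q$ the polynomial $b'$ changes sign on $[0,1]$, so the direct route fails and I would integrate by parts once in $s$:
\[
I_{n,q}^{r}=\frac{1}{\Gamma(1-\alpha)}\Big[(n+1-s)^{-\alpha}b(s)\Big|_{0}^{1}-\alpha\int_{0}^{1}(n+1-s)^{-\alpha-1}b(s)\,\mathrm{d}s\Big].
\]
The decisive point is that when $r>q$ both $s=0$ and $s=1$ are roots of $b$ (its integer roots run through $q-r+1,\dots,-1,0,1,\dots,q$), so the boundary term vanishes identically in $n$; applying $\nabla^{k}$ and the auxiliary fact with $\gamma=\alpha+1$ leaves
\[
\nabla^{k}I_{n,q}^{r}=-\frac{\alpha}{\Gamma(1-\alpha)}\int_{0}^{1}\big(\nabla^{k}_{n}h_{\alpha+1}(n)\big)\,b(s)\,\mathrm{d}s .
\]
On $(0,1)$ the polynomial $b$ has the constant sign $(-1)^{q}$ (exactly $q$ of its linear factors are negative there), so the integrand has sign $(-1)(-1)^{k}(-1)^{q}=(-1)^{k+q+1}$, yielding \eqref{eq:r>q}.

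The main obstacle, and the reason the two cases are handled by opposite techniques, is the boundary term from integration by parts. For $r\le q$ one of the two endpoint contributions carries the \emph{wrong} sign and does not cancel term by term, so integrating by parts is counter-productive and one must instead exploit that $b'$ is single-signed on $[0,1]$; conversely for $r>q$ it is $b'$ that oscillates while $b$ vanishes at both endpoints, which is precisely what makes the integration-by-parts route clean. A few degenerate situations (the root of $b$ landing on $s=1$ when $r=q$, and the kernel base $n-\sum u_{i}+1-s$ reaching $0$ when $n=k$) are harmless, being controlled by continuity and by the integrability of the weak singularity.
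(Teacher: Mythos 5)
Your proof is correct and follows essentially the same route as the paper's: the same case split, the same single integration by parts for $r>q$ with the boundary term annihilated by the roots of $\binom{s-q+r-1}{r}$ at $s=0,1$ and the sign $(-1)^{q}$ of that polynomial on $(0,1)$, and the same mechanism of expressing $\nabla^{k}$ of the kernel as an iterated integral carrying the sign $(-1)^{k}$ (the paper runs this as an induction, you state it as a one-shot derivative representation). The only substantive variation is cosmetic: you get the constant sign $(-1)^{r+1}$ of $b'$ on $[0,1]$ in the case $r\le q$ via Rolle interlacing of roots, whereas the paper reads it off from $b'(s)=b(s)\sum_{m=0}^{r-1}(s-q+m)^{-1}$ with each factor $(s-q+m)\le 0$.
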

\begin{proof}
We begin with the case of $r\le q$, according to the definition of $I_{n,q}^{r}$ in \eqref{ljqr}, it holds that
\begin{equation}\label{eq:inqr}
\begin{split}
I_{n,q}^{r}=&\frac{1}{\Gamma(1-\alpha)}\int_{0}^{1}(n+1-s)^{-\alpha}\mathrm{d}\binom{s-q+r-1}{r}  \\
               =&\frac{1}{\Gamma(1-\alpha)}\int_{0}^{1}(n+1-s)^{-\alpha}\sum_{n=0}^{r-1}\frac{1}{(s-q+n)}\binom{s-q+r-1}{r}\mathrm{d}s,
\end{split}
\end{equation}
since $(s-q+n)\le 0$ for $0\le s\le 1$ and $n=0,\cdots,r-1$, it yields that $(-1)^{r}\binom{s-q+r-1}{r}\ge 0$,
and consequently $(-1)^{r+1}\frac{\mathrm{d}}{\mathrm{d}s}\binom{s-q+r-1}{r}\ge 0$, combined with $(n+1-s)^{-\alpha}>0$ for any $n\ge 0$ and $\alpha>0$, it leads to $(-1)^{r+1}I_{n,q}^{r}\ge 0$. In addition, by definition, we may see that
\begin{equation}\label{eq:nablainqr}
\begin{split}
\nabla I_{n,q}^{r}=&\frac{1}{\Gamma(1-\alpha)}\int_{0}^{1}\Big((n+1-s)^{-\alpha}-(n-s)^{-\alpha}\Big)\mathrm{d}\binom{s-q+r-1}{r}      \\
                          =&\frac{-\alpha}{\Gamma(1-\alpha)}\int_{0}^{1}\int_{n}^{n+1}(\xi-s)^{-\alpha-1}\mathrm{d}\xi\mathrm{d}\binom{s-q+r-1}{r}  \\
	                  =&\frac{-\alpha}{\Gamma(1-\alpha)}\int_{0}^{1}\int_{0}^{1}(\xi+n-s)^{-\alpha-1}\mathrm{d}\xi\mathrm{d}\binom{s-q+r-1}{r},
\end{split}
\end{equation}
with $(\xi+n-s)^{-\alpha-1}\ge 0$ for $n\ge 1$ and $0\le \xi, s\le 1$, then $(-1)^{r+2}\nabla I_{n,q}^{r}\ge 0$.

Assume that for $k\ge 2$, it holds 
\begin{equation*}
	\nabla^{k-1}I_{n,q}^{r}=\frac{(-\alpha)_{k-1}}{\Gamma(1-\alpha)}\int_{[0,1]^{k}}(\sum_{i=1}^{k-1}\xi_{i}+n-k+2-s)^{-\alpha-k+1}\mathrm{d}^{k-1}\mathbf{\xi}\mathrm{d}\binom{s-q+r-1}{r},
\end{equation*}
where denote that $(\alpha)_{k-1}=\alpha(\alpha-1)\cdots(\alpha-k+2)$ and $\mathrm{d}^{k-1}\mathbf{\xi}=\mathrm{d}\xi_{1}\cdots\mathrm{d}\xi_{k-1}$, then 
	\begin{equation}\label{eq:nablakinqr}
	\begin{split}
	\nabla^{k}I_{n,q}^{r}&=\nabla^{k-1}I_{n,q}^{r}-\nabla^{k-1}I_{n-1,q}^{r}      \\
	                           &=\frac{(-\alpha)_{k-1}}{\Gamma(1-\alpha)}\int_{[0,1]^{k}}\nabla(\sum_{i=1}^{k-1}\xi_{i}+n-k+2-s)^{-\alpha-k+1}\mathrm{d}^{k-1}\mathbf{\xi}\mathrm{d}\binom{s-q+r-1}{r}    \\
	                           &=\frac{(-\alpha)_{k}}{\Gamma(1-\alpha)}\int_{[0,1]^{k}}\int_{n+1}^{n+2}(\sum_{i=1}^{k}\xi_{i}-k-s)^{-\alpha-k}\mathrm{d}\xi_{k}\mathrm{d}^{k-1}\mathbf{\xi}\mathrm{d}\binom{s-q+r-1}{r}     \\
	                           &=\frac{(-\alpha)_{k}}{\Gamma(1-\alpha)}\int_{[0,1]^{k+1}}(\sum_{i=1}^{k}\xi_{i}+n-k+1-s)^{-\alpha-k}\mathrm{d}^{k}\mathbf{\xi}\mathrm{d}\binom{s-q+r-1}{r}.
	\end{split}
	\end{equation}
Since $(\sum\limits_{i=1}^{k}\xi_{i}+n-k+1-s)\ge 0$ for $n\ge k\ge 1$ and $0\le\xi_{i}, s\le 1$, then \eqref{eq:r<q} holds.

In the other case of $r\ge q+1$, integrating by part yields that
\begin{equation}\label{eq:inqr1}
\begin{split}
I_{n,q}^{r}&=\frac{1}{\Gamma(1-\alpha)}\int_{0}^{1}(n+1-s)^{-\alpha}\mathrm{d}\binom{s-q+r-1}{r} \\
              &=\frac{-\alpha}{\Gamma(1-\alpha)}\int_{0}^{1}(n+1-s)^{-\alpha-1}\binom{s-q+r-1}{r}\mathrm{d}s,
\end{split}
\end{equation}
since $\binom{s-q+r-1}{r}$ includes a factor $s(s-1)$ for $r\ge q+1, q\in\mathbb{N}^{+}$. The sign of $\binom{s-q+r-1}{r} $ is the same with that of $\prod\limits_{i=1}^{q}(s-i)$, thus $(-1)^{q}\binom{s-q+r-1}{r}\ge 0$, and  it holds that $(-1)^{q+1}I_{n,q}^{r}\ge 0$ for $n\ge 0$.
Furthermore, the induction process demonstrates that
\begin{equation}\label{eq:nablakinqr1}
\nabla^{k}I_{n,q}^{r}=\frac{(-\alpha)_{k+1}}{\Gamma(1-\alpha)}\int_{[0, 1]^{k+1}}(\sum_{i=1}^{k}\xi_{i}+n-k+1-s)^{-\alpha-k-1}\binom{s-q+r-1}{r}\mathrm{d}^{k}\mathbf{\xi}\mathrm{d}s
\end{equation}
for $n\ge k\ge 1$, which arrives at \eqref{eq:r>q}.
\end{proof}
Moreover, we discuss the complete monotonicity of a general class of sequences.
\begin{lemma}
\label{le:1.5}
The sequence $\{s_{n}\}_{n=0}^{\infty}$ is defined by
\begin{equation*}
s_{n}=\frac{1}{\Gamma(1-\alpha)}\int_{0}^{1}(n+1-s)^{-\alpha}\varphi(s)\mathrm{d}s, \hspace{0.5cm} n\ge 0,
\end{equation*}
where $\varphi(s)\ge 0$ for $0\le s\le 1$. Then for $n\ge k$, it  holds that $(-1)^{k}\nabla^{k}s_{n}\ge 0$.
\end{lemma}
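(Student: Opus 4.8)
The plan is to follow exactly the strategy of Lemma \ref{le:Ijqr}, transferring the backward difference operator $\nabla^{k}$ from the index $n$ onto the weakly singular kernel $(n+1-s)^{-\alpha}$ inside the integral. Since $\varphi$ does not depend on $n$, the whole $n$-dependence sits in the kernel, so I would first reproduce the one-step computation of \eqref{eq:nablainqr}: writing $(n+1-s)^{-\alpha}-(n-s)^{-\alpha}=-\alpha\int_{0}^{1}(\xi+n-s)^{-\alpha-1}\,\mathrm{d}\xi$ gives
\begin{equation*}
\nabla s_{n}=\frac{-\alpha}{\Gamma(1-\alpha)}\int_{0}^{1}\int_{0}^{1}(\xi+n-s)^{-\alpha-1}\varphi(s)\,\mathrm{d}\xi\,\mathrm{d}s.
\end{equation*}
Iterating this identity exactly as in \eqref{eq:nablakinqr} --- at each step replacing the difference of two kernels by an integral of the next higher derivative --- I would obtain by induction the representation
\begin{equation*}
\nabla^{k}s_{n}=\frac{(-\alpha)_{k}}{\Gamma(1-\alpha)}\int_{[0,1]^{k+1}}\Big(\sum_{i=1}^{k}\xi_{i}+n-k+1-s\Big)^{-\alpha-k}\varphi(s)\,\mathrm{d}^{k}\mathbf{\xi}\,\mathrm{d}s,
\end{equation*}
which is the precise analogue of \eqref{eq:nablakinqr} with the measure $\mathrm{d}\binom{s-q+r-1}{r}$ replaced by $\varphi(s)\,\mathrm{d}s$.

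Once this formula is in hand, the conclusion follows from a sign count. The Pochhammer-type constant factorises as $(-\alpha)_{k}=\prod_{j=0}^{k-1}(-\alpha-j)=(-1)^{k}\prod_{j=0}^{k-1}(\alpha+j)$, and since $0<\alpha<1$ every factor $\alpha+j$ is strictly positive; hence $\mathrm{sign}\big((-\alpha)_{k}\big)=(-1)^{k}$. For $n\ge k$ the argument of the kernel satisfies $\sum_{i=1}^{k}\xi_{i}+n-k+1-s\ge n-k\ge 0$ on $[0,1]^{k+1}$, so the factor $(\cdots)^{-\alpha-k}$ is nonnegative there; combined with $\varphi(s)\ge 0$ and $\Gamma(1-\alpha)>0$, the integral itself is nonnegative. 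Multiplying the representation by $(-1)^{k}$ therefore yields $(-1)^{k}\nabla^{k}s_{n}\ge 0$, as claimed. I would also remark that the restriction $n\ge k$ guarantees that the smallest index appearing in $\nabla^{k}s_{n}$ is $n-k\ge 0$, so every term $s_{m}$ is well defined (the only singularity $(1-s)^{-\alpha}$ at $m=0$ being integrable because $\alpha<1$), whence $\nabla^{k}s_{n}$ is finite.

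The one genuinely delicate point in Lemma \ref{le:Ijqr} was tracking the sign of the binomial weight $\binom{s-q+r-1}{r}$, which forced the split into the cases $r\le q$ and $r>q$. Here that difficulty evaporates: the hypothesis $\varphi\ge 0$ supplies the required sign of the weight directly, so the entire sign of $\nabla^{k}s_{n}$ is carried by the constant $(-\alpha)_{k}$. I therefore expect no real obstacle; the only care needed is the routine induction establishing the $(k+1)$-fold integral representation and the bookkeeping ensuring the integrand stays nonnegative for $n\ge k$.
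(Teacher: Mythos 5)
Your proposal is correct and follows essentially the same route as the paper's own proof: a one-step transfer of the difference onto the kernel via $(n+1-s)^{-\alpha}-(n-s)^{-\alpha}=-\alpha\int_{0}^{1}(\xi+n-s)^{-\alpha-1}\mathrm{d}\xi$, induction to the $(k+1)$-fold integral representation with constant $(-\alpha)_{k}$, and a sign count using $\varphi\ge 0$ and the nonnegativity of the kernel argument for $n\ge k$. Your explicit factorisation $(-\alpha)_{k}=(-1)^{k}\prod_{j=0}^{k-1}(\alpha+j)$ and the remark on finiteness are slightly more careful than the paper's write-up (which even carries a typo $(-\alpha)_{n}$ in the subscript), but the argument is the same.
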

\begin{proof}
It is easy to check that $s_{n}\ge 0$ for all $n\ge 0$, since for $n\ge 0$, $0\le s\le 1$, it holds that $(n+1-s)^{-\alpha}>0$ and $\varphi(s)\ge0$ by assumption. The definition of $s_{n}$ implies that
\begin{equation*}
\begin{split}
\nabla s_{n}&=\frac{1}{\Gamma(1-\alpha)}\int_{0}^{1}\left((n+1-s)^{-\alpha}-(n-s)^{-\alpha}\right)\varphi(s)\mathrm{d}s   \\
                                          &=\frac{-\alpha}{\Gamma(1-\alpha)}\int_{0}^{1}\int_{0}^{1}(n+\xi-s)^{-\alpha-1}\varphi(s)\mathrm{d}\xi\mathrm{d}s, \\
\end{split}
\end{equation*}
since $(n+\xi-s)^{-\alpha-1}>0$ and $\varphi(s)\ge 0$ for $n\ge 1$ and $0\le s, \xi\le 1$, thus $\nabla s_{n}\le 0$ holds.
Therefore an induction process yields that
\begin{equation*}
\nabla^{k} s_{n}=\frac{(-\alpha)_{n}}{\Gamma(1-\alpha)}\int_{[0, 1]^{k+1}}(\sum_{i=1}^{k}\xi_{i}+n-k+1-s)^{-\alpha-k}\varphi(s)\mathrm{d}^{k}\xi\mathrm{d}s.
\end{equation*}
Since for $n\ge k$ and $0\le s,\xi_{i}\le 1$, it holds that $\sum\limits_{i=1}^{k}\xi_{i}+n-k+1-s)^{-\alpha-k}\varphi(s)\ge 0$, thus we can obtain that $(-1)^{k}\nabla^{k}s_{n}\ge 0$ for $n\ge k$.
\end{proof}
Next, we construct the numerical scheme
\begin{equation}
\label{eq:nnonliode}
D_{k,i}^{\alpha} u_{n}=f(t_{n},u_{n}), \hspace{0.5cm}  n\ge k,
\end{equation}
as the approximation to problem \eqref{eq:nolinfode} with prescribed starting values, and define the local truncation error of the $n$-th step by 
\begin{equation}
\label{eq:trunc}
\tau_{n}^{(k,i)}=D_{k,i}^{\alpha}u(t_{n})-{^{C}}D^{\alpha}u(t_{n}), \hspace{0.5cm} n\ge k,
\hspace{0.2cm}n\in\mathbb{N}^{+},
\end{equation}
where $u(t)$ is the exact solution of problem \eqref{eq:nolinfode}.

\begin{theorem}
\label{th:errorestmat}
Assume that $u(t)\in C^{k+1}[0, T]$ and $0<\alpha<1$, it holds that
\begin{equation}\label{eq:truncki}
D_{k,i}^{\alpha}u(t_{n})-{^{C}}D^{\alpha}u(t_{n})=O\left((t_{n-k+i})^{-\alpha-1}\Delta t^{k+1}+\Delta t^{k+1-\alpha}\right),\hspace{0.4cm}
\end{equation}
for $n\ge k$ in the cases of $1\le i<k\le 6$. In particular
\begin{equation}\label{eq:trunckk}
D_{k,k}^{\alpha}u(t_{n})-{^{C}}D^{\alpha}u(t_{n})=O(\Delta t^{k+1-\alpha}), \hspace{0.4cm} k=1,\cdots,6
\end{equation}
holds uniformly for $n\ge k$.
\end{theorem}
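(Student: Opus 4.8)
The plan is to work directly with the integral representation of the truncation error rather than with the explicit weight formulae. Subtracting \eqref{eq:Cderiv} from \eqref{Dki} at $t=t_n$ and abbreviating the interpolation error by $e(\xi):=P_i^k(\xi)-u(\xi)$, one gets
\[
\tau_n^{(k,i)}=\frac{1}{\Gamma(1-\alpha)}\int_0^{t_n}(t_n-\xi)^{-\alpha}e'(\xi)\,\mathrm{d}\xi ,
\]
so the entire problem reduces to estimating the convolution of the weakly singular kernel against $e'$. The structural fact I would exploit throughout is the continuity requirement \eqref{eq:2.4} (see Remark \ref{re:1}): every polynomial piece interpolates $u$ at both endpoints of its subinterval, hence $e(t_{m-1})=e(t_m)=0$ and in particular $\int_{I_m}e'\,\mathrm{d}\xi=0$ for every $m$.

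Next I would read off from \eqref{eq:Pik} the polynomial degree active on each subinterval. For $t\in I_m$ with $k-i+1\le m\le n$ (the middle and third sums) the active piece has degree $k$, so by the Lagrange remainder and $u\in C^{k+1}$ one has $|e|\lesssim\Delta t^{k+1}$ and $|e'|\lesssim\Delta t^{k}$ there; only the first sum in \eqref{eq:Pik} produces degree-$(k-1)$ pieces, on $I_1,\dots,I_{k-i}$, where instead $|e|\lesssim\Delta t^{k}$ and $|e'|\lesssim\Delta t^{k-1}$. This is exactly where the dichotomy between $i<k$ and $i=k$ originates: when $i=k$ the first sum is empty, so every piece has degree $k$.

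The estimate then splits into three regimes. On each $I_m$ with $m\le n-1$ the kernel is smooth; using $\int_{I_m}e'=0$ I would subtract the value $(t_n-t_{m-1})^{-\alpha}$ and bound the difference by $\Delta t\,(t_n-t_m)^{-\alpha-1}$, so that combined with $\int_{I_m}|e'|$ the contribution is $\lesssim(n-m)^{-\alpha-1}\Delta t^{k+1-\alpha}$ on degree-$k$ intervals and $\lesssim(n-m)^{-\alpha-1}\Delta t^{k-\alpha}$ on degree-$(k-1)$ intervals. Summing the degree-$k$ terms gives the convergent series $\sum_m(n-m)^{-\alpha-1}$, hence $O(\Delta t^{k+1-\alpha})$, while the finitely many degree-$(k-1)$ terms, all sitting near $\xi=0$, localize to $(n-k+i)^{-\alpha-1}\Delta t^{k-\alpha}=t_{n-k+i}^{-\alpha-1}\Delta t^{k+1}$, which is precisely the first term of \eqref{eq:truncki}. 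The remaining interval $I_n$ carries the genuine singularity; there I would integrate by parts, the boundary contributions vanishing because $e(t_{n-1})=e(t_n)=0$ (the piece on $I_n$ interpolates at $t_n$), reducing it to $\int_{I_n}(t_n-\xi)^{-\alpha}\bigl[e(\xi)/(t_n-\xi)\bigr]\mathrm{d}\xi$ with the bracket bounded by $\Delta t^{k}$, hence $O(\Delta t^{k+1-\alpha})$. Collecting the three regimes gives \eqref{eq:truncki}, and the empty first sum when $i=k$ deletes the $t_{n-k+i}^{-\alpha-1}$ term and yields the uniform bound \eqref{eq:trunckk}.

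The main obstacle is the singular interval $I_n$: a naive uniform bound $|e|\lesssim\Delta t^{k+1}$ makes $\int(t_n-\xi)^{-\alpha-1}|e|$ diverge, so one must use the vanishing of $e$ at the node $t_n$ to recover a factor $(t_n-\xi)$ before integrating. The second delicate point is purely bookkeeping: keeping the interpolation-error constants uniform in $n$ (they depend only on the bounded local node configuration on the uniform grid) and checking that the degree-$(k-1)$ contributions collapse exactly to the advertised $t_{n-k+i}^{-\alpha-1}\Delta t^{k+1}$ rather than to something weaker.
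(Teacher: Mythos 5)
Your proof is correct, and it reaches the paper's two estimates by a genuinely different decomposition. The paper performs a single global integration by parts, valid because by \eqref{eq:2.4} the error $e=P_i^k-u$ vanishes at every node and behaves like $O(t_n-\xi)$ at the singular endpoint, arriving at $\tau_n^{(k,i)}=\frac{-\alpha}{\Gamma(1-\alpha)}\int_0^{t_n}(t_n-\xi)^{-\alpha-1}e(\xi)\,\mathrm{d}\xi$ as in \eqref{eq:2.23}; it then inserts the explicit Lagrange remainder \eqref{eq:2.9} and exploits the algebraic fact that $\binom{s-q+k}{k+1}$ contains the factor $(1-s)$ for $q\le k$, which is precisely what tames the last interval and lets one uniform estimate cover all subintervals. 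You instead stay with the $e'$-form on interior intervals and replace the explicit remainder by two structural facts: $\int_{I_m}e'=0$, which licenses subtracting a constant kernel value and wins the increment bound $\alpha\,\Delta t\,(t_n-t_m)^{-\alpha-1}$, together with the size bounds $|e'|\lesssim\Delta t^{k}$ (respectively $\Delta t^{k-1}$ on the $k-i$ degree-$(k-1)$ intervals read off from \eqref{eq:Pik}); only on $I_n$ do you integrate by parts, recovering the crucial factor $(t_n-\xi)$ from $e(t_n)=0$ via the mean value theorem rather than from the binomial coefficient. The per-interval orders coincide with the paper's — $(n-m)^{-\alpha-1}\Delta t^{k+1-\alpha}$ for degree-$k$ pieces, summing to $O(\Delta t^{k+1-\alpha})$, and the finitely many near-origin degree-$(k-1)$ pieces collapsing to $t_{n-k+i}^{-\alpha-1}\Delta t^{k+1}$ — so \eqref{eq:truncki} and \eqref{eq:trunckk} follow exactly as stated, including the disappearance of the first term when $i=k$. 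What your route buys is robustness: it uses only endpoint interpolation conditions and local error magnitudes, so it would survive on nonuniform meshes or for other continuous piecewise approximants where no closed-form remainder is available; what the paper's route buys is lighter bookkeeping, since after one global integration by parts there are no boundary terms to track and no kernel-increment estimates to sum. The two caveats you flagged are indeed the only delicate points, and both check out: every piece appearing in \eqref{eq:Pik} has shift $1\le q\le k$ and hence interpolates $u$ at both endpoints of its support (justifying $\int_{I_m}e'=0$ and the vanishing boundary terms on $I_n$), and all node sets lie within $O(k\Delta t)$ of the supporting interval on the uniform grid, so the interpolation constants are uniform in $n$ and $m$.
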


\begin{proof}
According to \eqref{eq:pk}, it holds that
\begin{equation}
\label{eq:2.9}
p_{j,q}^{k}(t)-u(t)=u^{(k+1)}(\xi_{j})\binom{s-q+k}{k+1}(\Delta t)^{k+1},
\end{equation}
where $t=t_{j-1}+s\Delta t$ with $0\le s\le 1$ and $t_{j+q-k-1}\le \xi_{j}\le t_{j+q-1}$.

Inspired by \cite{GaoSZ:2014}, making use of the integration by part technique, we arrive at
\begin{equation}\label{eq:2.23}
\begin{split}
D_{k,i}^{\alpha}u(t_{n})-{^{C}}D^{\alpha}u(t_{n})&=\frac{1}{\Gamma(1-\alpha)}\sum_{j=1}^{n}\int_{t_{j-1}}^{t_{j}}(t_{n}-t)^{-\alpha}
\left(\frac{\mathrm{d} P_{i}^{k}(t)}{\mathrm{d}t}-\frac{\mathrm{d} u(t)}{\mathrm{d} t}\right)\mathrm{d}t \\
                                                                           &=\frac{-\alpha}{\Gamma(1-\alpha)}\sum_{j=1}^{n}\int_{t_{j-1}}^{t_{j}}(t_{n}-t)^{-\alpha-1}\left(P_{i}^{k}(t)-u(t)\right)\mathrm{d}t  \\
                                                                           &=\frac{-\alpha (\Delta t)^{-\alpha}}{\Gamma(1-\alpha)}\sum_{j=1}^{n}\int_{0}^{1}(n-j+1-s)^{-\alpha-1}\left(P_{i}^{k}(t_{j-1}+s\Delta t)-u(t_{j-1}+s\Delta t)\right)\mathrm{d}s
\end{split}
\end{equation}
for $n\ge k$,  which is based on the conditions of \eqref{eq:2.4} and \eqref{eq:2.9}. 
From the general representation of $P_{i}^{k}(t)$ in \eqref{eq:Pik}, it is known that for $k-i\ge 1$, the polynomials of degree $(k-1)$ are used on subinterval $\cup_{j=1}^{k-i}I_{j}$ to construct $P_{i}^{k}(t)$, in the other case of $k=i$, the polynomials of degree $k$ are chosen on each subinterval $I_{j}$ instead. Therefore, we next consider the two cases seperately.  

Substituting \eqref{eq:Pik} and \eqref{eq:2.9} into the last equivalent formula of \eqref{eq:2.23}, if $k=i$, one obtains  
\begin{equation*}
\begin{split}
|D_{k,k}^{\alpha}u(t_{n})-{^{C}}D^{\alpha}u(t_{n})|&\le\frac{\alpha\left(\Delta t\right)^{k+1-\alpha} }{\Gamma(1-\alpha)}\max_{\xi\in I}|u^{(k+1)}(\xi)|\Big(\sum_{j=1}^{n-k+1}\Big|\int_{0}^{1}(n-j+1-s)^{-\alpha-1}\binom{s}{k+1}\mathrm{d}s\Big|      \\
                                            &+\sum_{j=n-k+2}^{n}\Big|\int_{0}^{1}(n-j+1-s)^{-\alpha-1}\binom{s+k-n-1+j}{k+1}\mathrm{d}s\Big|\Big),
\end{split}
\end{equation*}
and if $1\le i\le k-1$, one has
\begin{equation*}
\begin{split}
|D_{k,i}^{\alpha}u(t_{n})-{^{C}}D^{\alpha}u(t_{n})|&\le \frac{\alpha\left(\Delta t\right)^{-\alpha} }{\Gamma(1-\alpha)}\Big((\Delta t)^{k}\max_{\xi\in \cup_{j=1}^{k-i}I_{j}}|u^{(k)}(\xi)|\sum_{j=1}^{k-i}\Big|\int_{0}^{1}(n-j+1-s)^{-\alpha-1}\binom{s+j-1}{k}\mathrm{d}s\Big| \\
                &+(\Delta t)^{k+1}\max_{\xi\in I}|u^{(k+1)}(\xi)|\sum_{j=k-i+1}^{n-i+1}\Big|\int_{0}^{1}(n-j+1-s)^{-\alpha-1}\binom{s+k-i}{k+1}\mathrm{d}s\Big| \\
                &+(\Delta t)^{k+1}\max_{\xi\in I}|u^{(k+1)}(\xi)|\sum_{j=n-i+2}^{n}\Big|\int_{0}^{1}(n-j+1-s)^{-\alpha-1}\binom{s+k-n-i+j}{k+1}\mathrm{d}s\Big|\Big).
                \end{split}
\end{equation*}

Since for any $q\le k$ and $q, k\in \mathbb{N}^{+}$, the factor $(1-s)$ is included in $\binom{s-q+k}{k+1}$ and $\frac{1}{1-s}\binom{s-q+k}{k+1}$ is bounded for $0\le s
\le 1$, thus we can obtain that
\begin{equation*}
\begin{split}
|D_{k,k}^{\alpha}u(t_{n})-{^{C}}D^{\alpha}u(t_{n})|\le&\frac{\alpha\left(\Delta t\right)^{k+1-\alpha} }{\Gamma(1-\alpha)}C^{(k)}\sum_{j=1}^{n}\int_{0}^{1}(n-j+1-s)^{-\alpha-1}(1-s)\mathrm{d}s     \\
                                                                              \le&\frac{\alpha\left(\Delta t\right)^{k+1-\alpha} }{\Gamma(1-\alpha)}C^{(k)}\Big(\sum_{j=1}^{n-1}\int_{0}^{1}(n-j+1-s)^{-\alpha-1}\mathrm{d}s+\int_{0}^{1}(1-s)^{-\alpha}\mathrm{d}s\Big) \\
                                                                              \le&\left(\Delta t\right)^{k+1-\alpha}C^{(k)}
                                                                              \Big(\frac{1}{\Gamma(1-\alpha)}+\frac{1}{\Gamma(2-\alpha)}\Big),
\end{split}
\end{equation*}
where $C^{(k)}$ is bounded relevant to $u^{(k+1)}$ and $k$. 
Moreover, for $i<k$, it holds
\begin{equation*}
\begin{split}
|D_{k,i}^{\alpha}u(t_{n})-{^{C}}D^{\alpha}u(t_{n})|&\le \frac{\alpha}{\Gamma(1-\alpha)}C^{(k,i)}\Big((\Delta t)^{k-\alpha}\sum_{j=1}^{k-i}\int_{0}^{1}(n-j+1-s)^{-\alpha-1}\mathrm{d}s     \\
&+(\Delta t)^{k+1-\alpha}\sum_{j=1}^{n}\int_{0}^{1}(n-j+1-s)^{-\alpha-1}(1-s)\mathrm{d}s\Big)     \\
                                                                             &\le C^{(k,i)}\Big(\frac{\alpha}{\Gamma(1-\alpha)}(\Delta t)^{k+1}(k-i)(t_{n-k+i})^{-\alpha-1}      \\
                                                                             &+(\Delta t)^{k+1-\alpha}\big(\frac{1}{\Gamma(1-\alpha)}+\frac{1}{\Gamma(2-\alpha)}\big)\Big),
\end{split}
\end{equation*}
where $C^{(k,i)}$ is a constant depending on $u^{(k)}$, $u^{(k+1)}$ and $k, i$.
\end{proof}
\begin{remark}
It is shown from formula \eqref{eq:truncki} that the order accuracy isn't uniform for all $n\ge k$. In the case of $t_{n}$ being near the origin, the accuracy order of the local truncation error reduced to the $(k-\alpha)$ order, in view that the $(k-1)$ degree polynomials as shown in \eqref{eq:P23} are chosen on the subinterval $\cup_{j=1}^{k-i} I_{j}$. However, replacing polynomials of degree $k$ on the corresponding subinterval can avoid this drawback, which is shown in \eqref{eq:trunckk}.
\end{remark}
\begin{remark}
There is need to point out that the local truncation error estimations \eqref{eq:truncki} and \eqref{eq:trunckk} holds only in the case of the solution $u(t)$ possessing proper smoothness on the closed interval $[0, T]$. In order to check the convergence rate of the global error when $f(t, u(t))$ is smooth with respect to $t$ and $u$, we apply the methods \eqref{Dki1} in the cases of $1\le i\le k\le 3$ on the test equation
\begin{equation}\label{eq:duf}
{^{C}}D^{\alpha}u(t)=f(t),\hspace{0.618cm} t\in (0, 1],
\end{equation}
such that the exact solution is $u(t)=E_{\alpha,1}(-t^{\alpha})\in C[0,1]\cap C^{\infty}(0,1]$. In Table \ref{ta:1} and \ref{ta:2}, the  accuracy and the convergence order of the error $|u(t_{M})-u_{M}|$ are shown for different timestep and order $\alpha$. According to the numerical experiment, the high order convergence seems to reduce to the first order in the cases of $1\le i\le k\le 3$.   
It is because that the solution of the problem \eqref{eq:nolinfode} only possess continuity on interval $I$ if function $f(t, u(t))$ is smooth on $I$. On the other hand, the integer order derivative of any smooth function on a compact domain $I$ still preserves to be smooth on $I$, in contrast, the $\alpha$ order fractional derivative of the smooth function isn't smooth any more, which implies there exist some continuous functions $f(t,u)$ such that the solution is smooth on $I$.
\end{remark}

\begin{table}
\centering
\caption{The error accuracy and convergence rate of $|u(t_{M})-u_{M}|$ in problem \eqref{eq:duf}.}	
\label{ta:1}
\footnotesize
\begin{tabular*}{\textwidth}{@{\extracolsep{\fill}}l@{\extracolsep{1cm}}l @{\extracolsep{1.4cm}}l@{\extracolsep{1cm}}l@{\extracolsep{1cm}} l@{\extracolsep{1cm}}l @{\extracolsep{1cm}}l@{\extracolsep{1cm}}l@{\extracolsep{\fill}} }
\toprule
$\alpha$& $M$ & \multicolumn{2}{l}{$(k, i)=(1,1)$} & \multicolumn{2}{l}{$(k,i)=(2, 1)$}&\multicolumn{2}{l}{$(k,i)=(2, 2)$} \\
\cline{3-8}
&&$|u(t_{M})-u_{M}|$&rate&$|u(t_{M})-u_{M}|$ &rate &$|u(t_{M})-u_{M}|$ &rate \\
\midrule
 0.1 & 20 & 4.70994E-03 & - & 9.55476E-04 & - & 9.67511E-04 & -   \\
  & 40 & 2.37818E-03 &  0.99 & 4.93426E-04 & 0.95 & 4.96390E-04 &  0.96  \\
  & 80 & 1.21118E-03 &  0.97 & 2.53608E-04 & 0.96 & 2.54352E-04 &  0.96  \\
  & 160 & 6.19034E-04 &  0.97 & 1.30008E-04 & 0.96 & 1.30197E-04 &  0.97  \\
  & 320 & 3.16763E-04 &  0.97 & 6.65299E-05 & 0.97 & 6.65780E-05 &  0.97  \\
\midrule
 0.5 & 20 & 3.59879E-02 & - & 1.27599E-03 & - & 1.24693E-03 & -   \\
  & 40 & 1.87445E-02 &  0.94 & 5.84522E-04 & 1.13 & 5.76449E-04 &  1.11  \\
  & 80 & 9.67807E-03 &  0.95 & 2.79573E-04 & 1.06 & 2.77440E-04 &  1.06  \\
  & 160 & 4.95832E-03 &  0.96 & 1.36716E-04 & 1.03 & 1.36166E-04 &  1.03  \\
  & 320 & 2.52435E-03 &  0.97 & 6.76067E-05 & 1.02 & 6.74666E-05 &  1.01  \\
\midrule
 0.9 & 20 & 6.28955E-02 & - & 2.95957E-03 & - & 2.96453E-03 & -   \\
  & 40 & 3.23694E-02 &  0.96 & 1.33551E-03 & 1.15 & 1.33646E-03 &  1.15  \\
  & 80 & 1.64619E-02 &  0.98 & 6.25412E-04 & 1.09 & 6.25601E-04 &  1.10  \\
  & 160 & 8.31769E-03 &  0.98 & 3.00737E-04 & 1.06 & 3.00775E-04 &  1.06  \\
  & 320 & 4.18769E-03 &  0.99 & 1.47049E-04 & 1.03 & 1.47057E-04 &  1.03  \\
 \bottomrule
\end{tabular*}
\end{table}
\begin{table}[ht]
\centering
\footnotesize
\caption{The error accuracy and convergence rate of $|u(t_{M})-u_{M}|$ in problem \eqref{eq:duf}.}	
\label{ta:2}
\begin{tabular*}{\textwidth}{@{\extracolsep{\fill}}l@{\extracolsep{1cm}}l @{\extracolsep{1.4cm}}l@{\extracolsep{1cm}}l@{\extracolsep{1cm}} l@{\extracolsep{1cm}}l @{\extracolsep{1cm}}l@{\extracolsep{1cm}}l@{\extracolsep{\fill}} }
\toprule
$\alpha$& $M$ & \multicolumn{2}{l}{$(k, i)=(3,1)$} & \multicolumn{2}{l}{$(k,i)=(3,2)$}&\multicolumn{2}{l}{$(k,i)=(3,3)$} \\
\cline{3-8}
&&$|u(t_{M})-u_{M}|$&rate&$|u(t_{M})-u_{M}|$ &rate &$|u(t_{M})-u_{M}|$ &rate \\
\midrule
0.1  & 20 & 9.24206E-04 & - & 9.17788E-04 & - & 9.23060E-04 & -   \\
  & 40 & 4.71821E-04 &  0.97 & 4.70329E-04 & 0.96 & 4.71636E-04 &  0.97  \\
  & 80 & 2.41265E-04 &  0.97 & 2.40900E-04 & 0.97 & 2.41229E-04 &  0.97  \\
  & 160 & 1.23379E-04 &  0.97 & 1.23288E-04 & 0.97 & 1.23371E-04 &  0.97  \\
  & 320 & 6.30607E-05 &  0.97 & 6.30376E-05 & 0.97 & 6.30591E-05 &  0.97  \\
  \midrule
 0.5  & 20 & 2.87149E-03 & - & 2.85982E-03 & - & 2.86650E-03 & -   \\
  & 40 & 1.42015E-03 &  1.02 & 1.41730E-03 & 1.01 & 1.41912E-03 &  1.01  \\
  & 80 & 7.06991E-04 &  1.01 & 7.06289E-04 & 1.00 & 7.06757E-04 &  1.01  \\
  & 160 & 3.52818E-04 &  1.00 & 3.52644E-04 & 1.00 & 3.52763E-04 &  1.00  \\
  & 320 & 1.76251E-04 &  1.00 & 1.76208E-04 & 1.00 & 1.76237E-04 &  1.00  \\
\midrule
0.9  & 20 & 1.30920E-03 & - & 1.30931E-03 & - & 1.30919E-03 & -   \\
  & 40 & 6.18111E-04 &  1.08 & 6.18099E-04 & 1.08 & 6.18098E-04 &  1.08  \\
  & 80 & 3.00274E-04 &  1.04 & 3.00268E-04 & 1.04 & 3.00270E-04 &  1.04  \\
  & 160 & 1.47969E-04 &  1.02 & 1.47967E-04 & 1.02 & 1.47968E-04 &  1.02  \\
  & 320 & 7.34347E-05 &  1.01 & 7.34342E-05 & 1.01 & 7.34345E-05 &  1.01  \\
\bottomrule
\end{tabular*}
\end{table}

\section{Stability analysis}\label{stabsection}

To consider the numerical stability of schemes \eqref{eq:nnonliode} with initial value $u(0)=u_{0}$, 
the analysis on the linear difference equation 
\begin{equation}
\label{DkiTest}
D_{k,i}^{\alpha}u_{n}=\lambda u_{n}, \hspace{0.618cm} n\ge k
\end{equation}
is given as follows. Using formulae \eqref{DkiTest}, we construct the equivalent relationship with respect to the generating power series 
\begin{equation*}
\sum_{n=0}^{\infty}D_{k,i}^{\alpha}u_{n+k}\xi^{n}=\lambda\sum_{n=0}^{\infty}u_{n+k}\xi^{n}.
\end{equation*}
Replacing \eqref{Dki1}, one hence has
\begin{equation}
\label{DkiTest1}
\omega^{(k,i)}(\xi)u(\xi)=zu(\xi)+g^{(k,i)}(\xi),
\end{equation}
where $z:=\lambda(\Delta t)^{\alpha}$,  the formal power series are denoted by
\begin{equation}\label{eq:fps}
\begin{split}
& u(\xi)=\sum\limits_{n=0}^{\infty}u_{n+k}\xi^{n}, \hspace{0.2cm} \omega^{(k,i)}(\xi)=\sum_{n=0}^{\infty}\omega_{n}^{(k,i)}\xi^{n}, \\
&g^{(k,i)}(\xi)=-\sum_{j=0}^{k-1}u_{j}\sum_{n=0}^{\infty}\big(w_{n+k,j}^{(k,i)}+\omega_{n+k-j}^{(k,i)}\big)\xi^{n}.
\end{split}
\end{equation}
Inspired by stability anlyses in \cite{LubichC:1983,LubichC:1986b}, we therefore present the following preliminary conclusions.
\begin{lemma}[\cite{LubichC:1986b}]
\label{le:3.4}
Assume that the coefficient sequence of $a(\xi)$ is in $l^{1}$. Let $|\xi_{0}|\le 1$. Then the coefficient sequence of
\begin{equation*}
b(\xi)=\frac{a(\xi)-a(\xi_{0})}{\xi-\xi_{0}}
\end{equation*}
converges to zero.
\end{lemma}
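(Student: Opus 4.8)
The plan is to make everything explicit at the level of coefficients and then reduce the claim to the elementary fact that the tail of a convergent series vanishes. First I would record that since the coefficient sequence $\{a_{n}\}_{n=0}^{\infty}$ lies in $\ell^{1}$, the series $a(\xi)=\sum_{n=0}^{\infty}a_{n}\xi^{n}$ converges absolutely on the whole closed unit disk $|\xi|\le 1$. In particular $a(\xi_{0})$ is well defined for any $\xi_{0}$ with $|\xi_{0}|\le 1$, so the quotient $b(\xi)$ is a legitimate object and not merely a formal symbol.

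Next I would compute the coefficients of $b(\xi)$ directly, avoiding any division by $\xi-\xi_{0}$. Using the algebraic identity $\xi^{n}-\xi_{0}^{n}=(\xi-\xi_{0})\sum_{j=0}^{n-1}\xi^{j}\xi_{0}^{\,n-1-j}$ for $n\ge 1$, I would write
\begin{equation*}
b(\xi)=\frac{a(\xi)-a(\xi_{0})}{\xi-\xi_{0}}=\sum_{n=1}^{\infty}a_{n}\sum_{j=0}^{n-1}\xi^{j}\xi_{0}^{\,n-1-j},
\end{equation*}
and then collect the coefficient of $\xi^{m}$, which comes from all terms with $n\ge m+1$ and $j=m$. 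This yields
\begin{equation*}
b_{m}=\sum_{n=m+1}^{\infty}a_{n}\,\xi_{0}^{\,n-1-m},\hspace{0.4cm} m\ge 0,
\end{equation*}
a series that converges absolutely because $|a_{n}\xi_{0}^{\,n-1-m}|\le|a_{n}|$ and $\{a_{n}\}\in\ell^{1}$.

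The final step is the estimate in which the hypothesis $|\xi_{0}|\le 1$ does all the work: since $|\xi_{0}|^{\,n-1-m}\le 1$ for every $n\ge m+1$, I obtain $|b_{m}|\le\sum_{n=m+1}^{\infty}|a_{n}|$, and the right-hand side is the tail of the convergent series $\sum_{n}|a_{n}|$, hence tends to $0$ as $m\to\infty$. The only point requiring genuine care is not analytic depth but the bookkeeping of the coefficient extraction, namely getting the exponent $n-1-m$ correct and confirming that collecting coefficients is legitimate; I would verify this rigorously by checking that the sequence $\{b_{m}\}$ defined above satisfies the defining relation $b(\xi)(\xi-\xi_{0})=a(\xi)-a(\xi_{0})$ coefficient by coefficient, so that no appeal to termwise rearrangement of a possibly non-absolutely-convergent double series in $\xi$ is needed. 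Once the explicit formula for $b_{m}$ is in hand, the conclusion is immediate.
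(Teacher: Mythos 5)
Your proof is correct. One thing to be aware of: the paper does not actually prove this lemma --- it is quoted from \cite{LubichC:1986b} and used as a black box in the proof of Theorem \ref{the:stabilityr} --- so there is no internal argument to compare against; your writeup supplies the missing standard proof, and it is the same computation that appears in Lubich's original paper. The key formula $b_{m}=\sum_{n=m+1}^{\infty}a_{n}\xi_{0}^{\,n-1-m}$ is right, and your instinct about where the care is needed is also right: the naive rearrangement is in fact already safe for $|\xi|<1$, since $\sum_{j=0}^{n-1}|\xi|^{j}|\xi_{0}|^{\,n-1-j}\le(1-|\xi|)^{-1}$ uniformly in $n$, so the double series converges absolutely and coefficients may be collected; alternatively your verification that $b(\xi)(\xi-\xi_{0})=a(\xi)-a(\xi_{0})$ holds coefficientwise (the coefficient of $\xi^{p}$, $p\ge 1$, is $b_{p-1}-\xi_{0}b_{p}=a_{p}$, and that of $\xi^{0}$ is $-\xi_{0}b_{0}=a_{0}-a(\xi_{0})$) closes the same gap without any convergence discussion. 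Note also that your tail estimate $|b_{m}|\le\sum_{n=m+1}^{\infty}|a_{n}|$ proves exactly the right statement and no more: $\{b_{m}\}$ need not lie in $l^{1}$ (take $\xi_{0}=1$ and $a_{n}\ge 0$ with $\sum_{n}na_{n}=\infty$), which is precisely why the paper's stability argument in Theorem \ref{the:stabilityr} can conclude only that this coefficient sequence converges to zero, in contrast with the $l^{1}$ statements of Lemma \ref{le:3.2} and Lemma \ref{le:3.3}.
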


\begin{theorem}[\cite{Rudin:1987, ZygmundA:2002}]
\label{th:pw}
Suppose that
\begin{equation*}
f(z)=\sum_{n=0}^{\infty}c_{n}z^{n}, \hspace{0.4cm} \sum_{n=0}^{\infty}|c_{n}|<\infty,
\end{equation*}
and $f(z)\ne 0$ for every $|z|\le 1$. Then
\begin{equation*}
\frac{1}{f(z)}=\sum_{n=0}^{\infty}a_{n}z^{n}\hspace{0.2cm}\text{with}\hspace{0.1cm} \sum_{n=0}^{\infty}|a_{n}|<\infty.
\end{equation*}
\end{theorem}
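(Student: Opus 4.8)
The plan is to read this statement as the characterization of invertibility in the Wiener algebra and to prove it through Gelfand's theory of commutative Banach algebras. First I would introduce
\[
A_{+}=\Big\{f(z)=\sum_{n=0}^{\infty}c_{n}z^{n}:\ \|f\|:=\sum_{n=0}^{\infty}|c_{n}|<\infty\Big\},
\]
and observe that it is a commutative Banach algebra with unit $1$ under ordinary multiplication: the product of two such series has coefficient sequence equal to the convolution of the two coefficient sequences, and the submultiplicativity $\|fg\|\le\|f\|\,\|g\|$ is exactly the statement that $\ell^{1}$ is closed under convolution with the corresponding norm bound. In this language the theorem asserts precisely that $f$ is invertible in $A_{+}$ if and only if $f(z)\ne 0$ for all $|z|\le 1$, because an inverse is a $g=\sum a_{n}z^{n}\in A_{+}$ with $fg=1$, i.e. $1/f=g$ has absolutely summable coefficients.

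The core of the argument is to compute the maximal ideal space $\mathcal{M}$ of $A_{+}$ and to identify it with the closed disk $\overline{\mathbb{D}}=\{|z|\le 1\}$. Each $z_{0}\in\overline{\mathbb{D}}$ gives a nonzero complex homomorphism $\phi_{z_{0}}:f\mapsto f(z_{0})$, which is well defined and bounded since the series converges absolutely on $\overline{\mathbb{D}}$. Conversely, any complex homomorphism $\phi$ is continuous with norm at most $1$ and is determined by $z_{0}:=\phi(\mathrm{id})$, where $\mathrm{id}(z)=z$; from $\|\mathrm{id}^{n}\|=1$ one gets $|z_{0}|^{n}=|\phi(\mathrm{id}^{n})|\le 1$ for every $n$, forcing $z_{0}\in\overline{\mathbb{D}}$, and then $\phi=\phi_{z_{0}}$ first on polynomials and, by continuity and the density of polynomials in $A_{+}$, on all of $A_{+}$. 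Hence the Gelfand transform of $f$ is just the function $z\mapsto f(z)$ on $\overline{\mathbb{D}}$.

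Once the spectrum is identified, the conclusion follows at once from the standard theorem that an element of a commutative unital Banach algebra is invertible exactly when its Gelfand transform is nowhere zero on $\mathcal{M}$: the hypothesis $f(z)\ne 0$ on $\overline{\mathbb{D}}$ is precisely this condition, so $f^{-1}\in A_{+}$ and $1/f=\sum a_{n}z^{n}$ with $\sum|a_{n}|<\infty$, as claimed.

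I expect the genuine obstacle to be the surjective half of the spectrum computation, namely that every complex homomorphism is a point evaluation and that none of them leaves the closed disk; this is the real content of Wiener's theorem, and the estimate $|z_{0}|\le 1$ together with continuity is what closes it. If one prefers a self-contained route that avoids quoting the full Gelfand machinery, I would instead run Wiener's localization argument: starting from $\delta:=\min_{|z|\le 1}|f(z)|>0$, show that near each point $z_{0}$ the reciprocal agrees with an element of $A_{+}$ by splitting $f$ into a low-degree polynomial and an $A_{+}$-tail of norm less than $\delta$ and inverting the latter by a Neumann series, then patch the local inverses using an $A_{+}$ partition of unity and remove the remaining error by one more Neumann series. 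Either way the decisive step is converting pointwise non-vanishing into $\ell^{1}$ control of the reciprocal's coefficients; the bookkeeping (Banach-algebra axioms, boundedness of the evaluations, density of polynomials) I would treat as routine.
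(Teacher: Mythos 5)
Your main argument is correct, and there is in fact nothing to compare it against inside the paper: the paper does not prove this statement, it imports it with citations to Rudin and Zygmund. Your Gelfand-theoretic proof is essentially the proof in the cited Rudin reference --- realize the analytic Wiener algebra $A_{+}$ as a commutative unital Banach algebra under coefficient convolution, identify its maximal ideal space with $\overline{\mathbb{D}}$ via point evaluations (automatic continuity of characters, $|\phi(\mathrm{id})|^{n}=|\phi(\mathrm{id}^{n})|\le\|\mathrm{id}^{n}\|=1$, then density of polynomials), and invoke the standard criterion that an element is invertible iff its Gelfand transform vanishes nowhere. The steps you flag as routine are routine and correctly deployed, and you correctly isolate the surjectivity of the spectrum computation as the real content.

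One caveat concerning your proposed self-contained fallback: Wiener's localization argument with a partition of unity cannot be run verbatim inside $A_{+}$. Elements of $A_{+}$ are analytic in the open disk, and a nonzero analytic function cannot vanish on an open subset, so $A_{+}$ contains no partition of unity subordinate to a cover of $\overline{\mathbb{D}}$; the patching step as you describe it has no raw material to work with. The localization proof lives naturally in the full Wiener algebra $W$ of the circle (which does contain functions supported on small arcs --- this is the setting of the Zygmund citation). To recover the $A_{+}$ statement along that route you would invert $f$ in $W$, using only nonvanishing on $|z|=1$, and then argue separately that the negative Fourier coefficients of $1/f$ vanish, e.g.\ because $f$ is zero-free on $|z|<1$ (this is exactly where the hypothesis on the open disk enters) so that $1/f$ is holomorphic there and continuous up to the boundary. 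This defect is repairable and does not affect your primary route, which is complete as written.
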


\begin{theorem}[\cite{AkhiezerN:1965, ShohatJJ:1970}]
\label{th:cm}
For the moment problem
\begin{equation*}
s_{k}=\int_{0}^{1}u^{k}\mathrm{d}\sigma(u),\hspace{0.5cm} k=0, 1, \cdots
\end{equation*}
to be soluble within the class of non-decreasing functions iff the inequalities
\begin{equation*}
(-1)^{m}\nabla^{m}s_{k}\ge 0
\end{equation*}
hold for $k\ge m$.
\end{theorem}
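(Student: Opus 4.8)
The statement is the classical characterization of Hausdorff moment sequences, and I would prove the two implications separately. The necessity (the ``only if'' direction) is the short one: assuming $s_{k}=\int_{0}^{1}u^{k}\,\mathrm{d}\sigma(u)$ with $\sigma$ non-decreasing, I would note that the backward difference acts on the index $k$ through $\nabla u^{k}=u^{k-1}(u-1)$, so that iterating gives $\nabla^{m}u^{k}=u^{k-m}(u-1)^{m}$ and hence
\begin{equation*}
(-1)^{m}\nabla^{m}s_{k}=\int_{0}^{1}u^{k-m}(1-u)^{m}\,\mathrm{d}\sigma(u)\ge 0,\qquad k\ge m,
\end{equation*}
because each factor in the integrand is nonnegative on $[0,1]$ and $\mathrm{d}\sigma\ge 0$. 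This is essentially the sign computation already carried out in Lemma \ref{le:Ijqr} and Lemma \ref{le:1.5}, where complete monotonicity was deduced from an integral representation.

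The sufficiency direction is where the real work lies, and the plan is the classical constructive argument. Given a sequence satisfying $(-1)^{m}\nabla^{m}s_{k}\ge 0$ for all $k\ge m$, I would introduce, for each $n\ge 1$, the candidate point masses
\begin{equation*}
\lambda_{n,k}=\binom{n}{k}(-1)^{n-k}\nabla^{n-k}s_{n},\qquad k=0,1,\dots,n,
\end{equation*}
which are nonnegative precisely by the hypothesis (take $m=n-k$, index $n\ge m$). I would place mass $\lambda_{n,k}$ at the node $k/n\in[0,1]$ and let $\sigma_{n}$ be the associated non-decreasing step function. Two facts must then be verified by elementary binomial-identity computations: first, that the total mass $\sum_{k=0}^{n}\lambda_{n,k}$ equals $s_{0}$, so the family $\{\sigma_{n}\}$ is uniformly bounded; and second, that the $m$-th moment $\sum_{k=0}^{n}(k/n)^{m}\lambda_{n,k}$ of $\sigma_{n}$ converges to $s_{m}$ as $n\to\infty$ for each fixed $m$.

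With the approximants in hand, I would invoke Helly's selection theorem: the uniformly bounded non-decreasing functions $\sigma_{n}$ admit a subsequence $\sigma_{n_{l}}$ converging, at every continuity point of the limit, to a bounded non-decreasing function $\sigma$. Since $u\mapsto u^{m}$ is continuous on the compact interval $[0,1]$, the Helly--Bray (weak convergence) lemma yields $\int_{0}^{1}u^{m}\,\mathrm{d}\sigma_{n_{l}}\to\int_{0}^{1}u^{m}\,\mathrm{d}\sigma$, and comparing with the moment convergence above gives $s_{m}=\int_{0}^{1}u^{m}\,\mathrm{d}\sigma$, which is exactly solvability within the non-decreasing class.

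The main obstacle is the second verification in the construction, namely that the discrete moments $\sum_{k}(k/n)^{m}\lambda_{n,k}$ genuinely tend to $s_{m}$. This is the only place where a real computation is unavoidable: the sum must be evaluated intrinsically from $\{s_{k}\}$ alone (expressing it through falling factorials / Stirling numbers and checking that the spurious lower-order terms vanish as $n\to\infty$), since one cannot yet invoke a representing measure. It is precisely this approximate rather than exact matching of moments that forces the passage to a limit. An alternative route isolating the same difficulty is the functional-analytic one: define $L(u^{m})=s_{m}$ on polynomials, use the identity $L\bigl(u^{i}(1-u)^{j}\bigr)=(-1)^{j}\nabla^{j}s_{i+j}\ge 0$ together with uniform convergence $B_{n}f\to f$ of Bernstein polynomials to show $L$ is positive and bounded by $s_{0}\|\cdot\|_{\infty}$, extend it to $C[0,1]$, and apply the Riesz representation theorem; the verification that the represented moments equal $s_{m}$ then reduces to the very same limit.
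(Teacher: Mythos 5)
The paper contains no proof of Theorem \ref{th:cm}: it is imported verbatim from \cite{AkhiezerN:1965, ShohatJJ:1970} and used as a black box to convert the completely monotone sequences of Lemma \ref{le:Ijqr} and Lemma \ref{le:1.5} into moment representations such as \eqref{eq:ln}. Your proposal is therefore not competing with an internal argument but with the classical one in the cited monographs, and it reproduces that argument correctly: the necessity computation $(-1)^{m}\nabla^{m}s_{k}=\int_{0}^{1}u^{k-m}(1-u)^{m}\,\mathrm{d}\sigma(u)$ is right for the paper's backward-difference convention, and the sufficiency plan --- nonnegative masses $\lambda_{n,k}=\binom{n}{k}(-1)^{n-k}\nabla^{n-k}s_{n}$ at the nodes $k/n$, total mass $s_{0}$, then Helly selection and Helly--Bray --- is precisely Hausdorff's construction. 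One remark removes most of the ``real computation'' you flag as the main obstacle: the approximants match the factorial moments \emph{exactly},
\begin{equation*}
\sum_{k=m}^{n}\frac{\binom{k}{m}}{\binom{n}{m}}\,\lambda_{n,k}=s_{m},\qquad n\ge m,
\end{equation*}
since after expanding $\nabla^{n-k}s_{n}$ the coefficient of $s_{l}$ collapses, via $\binom{n}{k}\binom{n-k}{n-l}=\binom{n}{l}\binom{l}{k}$ and the alternating binomial sum, to $\delta_{lm}$; and because $\binom{k}{m}/\binom{n}{m}-(k/n)^{m}=O(1/n)$ uniformly in $0\le k\le n$ while the total mass stays fixed at $s_{0}$, convergence of the ordinary discrete moments to $s_{m}$ follows with no Stirling-number bookkeeping. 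A second small correction concerns your alternative Bernstein--Riesz route: there the represented moments equal $s_{m}$ exactly by construction, since the Riesz extension agrees with $L$ on polynomials and $L(u^{m})=s_{m}$ by definition; the limit $B_{n}f\to f$ enters only in establishing positivity of $L$ on polynomials, where one should note that $L(B_{n}f)=\sum_{k}f(k/n)\lambda_{n,k}\ge 0$ for $f\ge 0$ and that $B_{n}$ maps a polynomial of degree $d$ to a polynomial of degree at most $d$ whose coefficients converge to those of $f$, so that $L(B_{n}f)\to L(f)$ is legitimate even though $L$ is not yet known to be bounded. With these two adjustments your outline is a complete and correct proof, in substance the same as the one in the paper's cited sources.
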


\begin{lemma}
\label{le:3.1}
The coefficient sequences of series $g^{(k,i)}(\xi)$ converge to zero.
\end{lemma}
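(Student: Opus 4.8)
The plan is to reduce the assertion to a statement about the tail behaviour of the individual weights. By \eqref{eq:fps}, the coefficient of $\xi^{n}$ in $g^{(k,i)}(\xi)$ equals $-\sum_{j=0}^{k-1}u_{j}\big(w_{n+k,j}^{(k,i)}+\omega_{n+k-j}^{(k,i)}\big)$, a finite linear combination with the fixed starting values $u_{0},\dots,u_{k-1}$ as coefficients. Bounding it by $\max_{j}|u_{j}|\sum_{j=0}^{k-1}\big(|w_{n+k,j}^{(k,i)}|+|\omega_{n+k-j}^{(k,i)}|\big)$, I see it suffices to prove that $w_{m,j}^{(k,i)}\to 0$ and $\omega_{m}^{(k,i)}\to 0$ as $m\to\infty$ for each fixed $j$.

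First I would record that $I_{n,q}^{r}\to 0$ as $n\to\infty$ for every fixed $q,r$. This is immediate from \eqref{ljqr}: the differential $\mathrm{d}\binom{s-q+r-1}{r}$ has bounded total variation on $[0,1]$ since $\binom{s-q+r-1}{r}$ is a fixed polynomial in $s$, while $(n+1-s)^{-\alpha}\le n^{-\alpha}$ uniformly for $s\in[0,1]$, so $|I_{n,q}^{r}|\le C_{q,r}\,n^{-\alpha}/\Gamma(1-\alpha)\to 0$ (this also follows from the monotonicity representations established in Lemma \ref{le:Ijqr}). Consequently every fixed backward difference $\nabla^{p}I_{n,q}^{r}$, being a finite combination of the $I_{n-\ell,q}^{r}$, tends to zero as well. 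Since each $\omega_{m}^{(k,i)}$ and each $w_{m,j}^{(k,i)}$ is a finite linear combination---with coefficients independent of $m$---of terms $I_{m+c,q}^{r}$ and $\nabla^{p}I_{m+c,q}^{r}$ (explicit in \eqref{eq:ki31}--\eqref{eq:ki33} for $k\le 3$, and of the same shape in general), it follows at once that $\omega_{m}^{(k,i)}\to 0$ and $w_{m,j}^{(k,i)}\to 0$, which completes the reduction above.

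The step I expect to be the main obstacle is justifying the uniform ``finite combination'' structure of the weights across the whole range $1\le i\le k\le 6$, since the explicit tables are displayed only for $k\le 3$. I would handle this structurally rather than by computing general formulae. The quantity $w_{m,j}^{(k,i)}+\omega_{m-j}^{(k,i)}$ is exactly the coefficient of the fixed nodal value $u_{j}$ in $(\Delta t)^{\alpha}D_{k,i}^{\alpha}u_{m}$, and from the construction \eqref{eq:Pik} the Lagrange stencil used on a subinterval $I_{j'}$ consists of $k+1$ consecutive nodes (or the $k$ nodes $\{0,\dots,k-1\}$ on the starting subintervals), all lying within distance $k$ of $j'$. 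Hence a fixed index $j\le k-1$ enters $P_{i}^{k}$ only on the boundedly many subintervals $I_{j'}$ with $|j'-j|\le k$, and on each of these its contribution to \eqref{Dki1} has the form $\frac{1}{\Gamma(1-\alpha)}\int_{0}^{1}(m-j'+1-s)^{-\alpha}\,\frac{\mathrm{d}}{\mathrm{d}s}L_{j}^{(j')}(s)\,\mathrm{d}s$, where $\frac{\mathrm{d}}{\mathrm{d}s}L_{j}^{(j')}$ is a bounded basis derivative on the reference interval. Because these finitely many indices $j'$ remain near the origin, the kernel factor satisfies $(m-j'+1-s)^{-\alpha}\le(m-j')^{-\alpha}\to 0$, so each of the finitely many contributions vanishes and $w_{m,j}^{(k,i)}+\omega_{m-j}^{(k,i)}\to 0$ uniformly over $1\le i\le k\le 6$. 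Tracing this back through the first paragraph then yields that the coefficient sequence of $g^{(k,i)}(\xi)$ converges to zero.
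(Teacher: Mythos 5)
Your proof is correct, and it reaches the conclusion by a route that overlaps with but is not identical to the paper's. For the decay of the building blocks, the paper reuses the iterated integral representations of $\nabla^{k}I_{n,q}^{r}$ established in Lemma \ref{le:Ijqr} (formulas \eqref{eq:nablakinqr} and \eqref{eq:nablakinqr1}) and passes to the limit $n\to\infty$ under the integral sign, whereas you bound $I_{n,q}^{r}$ directly from the definition \eqref{ljqr} by $C_{q,r}\,n^{-\alpha}/\Gamma(1-\alpha)$ and let finite differencing inherit the decay; your version is more elementary (it does not need the complete-monotonicity machinery) and even yields a rate $O(m^{-\alpha})$ rather than bare convergence to zero. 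The more substantive divergence is in the second step: the paper simply asserts that $g_{n}^{(k,i)}$ is ``the finite linear combination of $\nabla^{k}I_{j,q}^{r}$ for finite $k$,'' which is verifiable from the displayed weight tables only for $1\le i\le k\le 3$, while you correctly flag this as the gap for $4\le k\le 6$ and repair it structurally: you identify $w_{m,j}^{(k,i)}+\omega_{m-j}^{(k,i)}$ as the coefficient of $u_{j}$ in $(\Delta t)^{\alpha}D_{k,i}^{\alpha}u_{m}$, and use the stencil locality visible in \eqref{eq:Pik} (every piece $p_{j',q}^{k}$ interpolates at nodes within distance $k$ of its support $I_{j'}$, and the starting pieces use only the nodes $0,\dots,k-1$) to conclude that a fixed node $j\le k-1$ enters $P_{i}^{k}$ only on the boundedly many subintervals $I_{j'}$ with $j'\le 2k-1$, each contributing $O(m^{-\alpha})$ through the kernel factor in \eqref{Dki1}. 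This combined-coefficient argument is exactly what the lemma requires --- it never needs $w_{m,j}^{(k,i)}$ and $\omega_{m}^{(k,i)}$ to vanish separately, although both do --- and it covers the whole range $1\le i\le k\le 6$ uniformly, so your write-up is, if anything, more complete than the paper's at the one point where the paper argues by assertion; the paper's route, in exchange, is shorter in context, since the representations of Lemma \ref{le:Ijqr} are already available there.
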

\begin{proof}
 According to the expression of $\nabla^{k}I_{n,q}^{r}$ in Lemma \ref{le:Ijqr}, it yields that

\begin{equation*}
\lim_{n\to\infty}\nabla^{k}I_{n,q}^{r}=\frac{(-\alpha)_{k}}{\Gamma(1-\alpha)}\int_{[0,1]^{k+1}}\lim_{n\to\infty}\Big(\sum_{i=1}^{k}\xi_{i}+n-k+1-s\Big)^{-\alpha-k}\mathrm{d}^{k}\mathbf{\xi}\mathrm{d}\binom{s-q+r-1}{r}=0
\end{equation*}
or
\begin{equation*}
\lim_{n\to\infty}\nabla^{k}I_{n,q}^{r}=\frac{(-\alpha)_{k+1}}{\Gamma(1-\alpha)}\int_{[0, 1]^{k+1}}\lim_{n\to\infty}\Big(\sum_{i=1}^{k}\xi_{i}+n-k+1-s\Big)^{-\alpha-k-1}\binom{s-q+r-1}{r}\mathrm{d}^{k}\mathbf{\xi}\mathrm{d}s=0
\end{equation*}
for $k, q, r\in\mathbb{N}^{+}$ that are independent of $n$ and $\alpha>0$. Note that $g_{n}^{(k,i)}=-\sum\limits_{j=0}^{k-1}u_{j}(w_{n+k,j}^{(k,i)}+\omega_{n+k-j}^{(k,i)})$ is the finite linear combination of $\nabla^{k}I_{j,q}^{r}$ for finite $k$, thus it deduces $g_{n}^{(k,i)}\to 0$ as $n\to\infty$ if $\{u_{j}\}_{j=0}^{k-1}$ are bounded.
\end{proof}

\begin{lemma}
\label{le:3.2}
For $1\le i\le k\le 6$, the coefficient sequence of $\omega^{(k,i)}(\xi)$ belongs to $l^{1}$ space.
\end{lemma}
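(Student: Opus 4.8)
The plan is to reduce the claim to a telescoping estimate for each individual weight family $\nabla^{r}I_{n,q}^{r}$, exploiting the fixed-sign (complete monotonicity) property already established in Lemma \ref{le:Ijqr}. The starting point is the observation, visible in the explicit formulas \eqref{eq:ki31}, \eqref{eq:ki32}, \eqref{eq:ki33} for $k\le 3$ and valid by the same construction for $4\le k\le 6$, that the \emph{tail} of the convolution weights has the uniform shape
\begin{equation*}
\omega_{n}^{(k,i)}=\sum_{r=1}^{k}\nabla^{r}I_{n+i-1,\,i}^{\,r}
\end{equation*}
for all $n$ larger than some fixed index $n_{0}=n_{0}(k,i)$, while for $0\le n< n_{0}$ there are only finitely many weights, each a finite linear combination of finite quantities $I_{m,q}^{r}$, hence contributing a finite amount to the $l^{1}$ norm. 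Thus it suffices to show that for each fixed $r\in\{1,\dots,k\}$ and $q=i$ the series $\sum_{n\ge n_{0}}\bigl|\nabla^{r}I_{n+i-1,\,i}^{\,r}\bigr|$ converges; the full statement then follows by summing over the finitely many values of $r$ and adding the finitely many low-index contributions.

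Next I would remove the absolute value using Lemma \ref{le:Ijqr}. Applying that lemma with difference order and superscript both equal to $r$ and with $q=i$, one gets $(-1)^{2r+1}\nabla^{r}I_{n,i}^{r}\ge 0$ when $r\le i$ and $(-1)^{r+i+1}\nabla^{r}I_{n,i}^{r}\ge 0$ when $r>i$; in either case the sign of $\nabla^{r}I_{n,i}^{r}$ is constant in $n$. Consequently the sum of absolute values equals the absolute value of the sum, and the latter telescopes: writing $\nabla^{r}=\nabla\,\nabla^{r-1}$ gives
\begin{equation*}
\sum_{n=n_{0}}^{N}\nabla^{r}I_{n+i-1,\,i}^{\,r}=\nabla^{r-1}I_{N+i-1,\,i}^{\,r}-\nabla^{r-1}I_{n_{0}+i-2,\,i}^{\,r}.
\end{equation*}
By the vanishing limit $\lim_{m\to\infty}\nabla^{r-1}I_{m,i}^{r}=0$ already computed in the proof of Lemma \ref{le:3.1}, the first term tends to $0$ as $N\to\infty$, so the telescoped sum converges to the finite value $-\nabla^{r-1}I_{n_{0}+i-2,\,i}^{\,r}$. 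This yields $\sum_{n}\bigl|\nabla^{r}I_{n+i-1,\,i}^{\,r}\bigr|<\infty$ for each $r$, and hence $\sum_{n}|\omega_{n}^{(k,i)}|<\infty$, i.e. the coefficient sequence of $\omega^{(k,i)}(\xi)$ lies in $l^{1}$.

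The routine parts here are the telescoping and the limit, which are immediate from Lemma \ref{le:3.1} and the recursion defining $\nabla^{r}$. The genuine obstacle is the first step: justifying the uniform tail representation $\omega_{n}^{(k,i)}=\sum_{r=1}^{k}\nabla^{r}I_{n+i-1,\,i}^{\,r}$ for the range $4\le k\le 6$, for which the explicit coefficient formulas are not displayed in the text, together with checking that the pair $(r,q)=(r,i)$ indeed falls under one of the two sign regimes of Lemma \ref{le:Ijqr} for every relevant $r$. I expect this to require a careful bookkeeping of the construction \eqref{eq:Pik}: the summand $\sum_{j=k}^{n}p_{j-i+1,i}^{k}$ produces, after the integration-by-parts identity in \eqref{Dki1} and the backward-difference structure of \eqref{eq:pk}, exactly the shifted terms $\nabla^{r}I_{n+i-1,i}^{r}$ once $n$ exceeds the reach of the boundary blocks $\sum_{j=1}^{k-i}p_{j,k-j}^{k-1}$ and $\sum_{j=n-i+2}^{n}p_{j,n+1-j}^{k}$, which only perturb finitely many weights. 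Verifying that this reach is a fixed $n_{0}$ independent of $n$, and that no further $r$- or $q$-values intrude for larger $k$, is the step that must be handled with care; everything downstream is a fixed-sign telescoping argument.
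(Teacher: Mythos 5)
Your proposal is correct and follows essentially the same route as the paper: the paper's proof likewise splits off finitely many initial weights, writes the tail weights as $\sum_{m=1}^{k}\nabla^{m}I_{n',i}^{m}$ (with the shift $n'=n+i-1$ you identify), and uses the fixed sign from Lemma \ref{le:Ijqr} together with the vanishing limit from Lemma \ref{le:3.1} to obtain the telescoping identity $\sum_{n=p}^{\infty}|\nabla^{m}I_{n,q}^{r}|=|\nabla^{m-1}I_{p-1,q}^{r}|$ (its equation \eqref{eq:3.3a}). The bookkeeping for $4\le k\le 6$ that you flag as the delicate step is handled in the paper only implicitly, via the existence of a finite cutoff $M=M(k,i)$, so your version is if anything slightly more explicit about the same argument.
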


\begin{proof}
As indicated in Lemma \ref{le:Ijqr} and Lemma \ref{le:3.1}, the following relationship  
\begin{equation}
\label{eq:3.3a}
\sum_{n=p}^{\infty}|\nabla^{k}I_{n,q}^{r}|=|\sum_{n=p}^{\infty} (\nabla^{k-1}I_{n,q}^{r}-\nabla^{k-1}I_{n-1,q}^{r})|=|\nabla^{k-1}I_{p-1,q}^{r}|
\end{equation}
holds for $p\ge k\ge 1$. Therefore, according to the definition of sequence $\{\omega_{n}^{(k,i)}\}_{n=0}^{\infty}$, there exists finite positive integer $M=M(k,i)$, such that
\begin{equation*}
\begin{split}
\sum_{n=0}^{\infty}|\omega_{n}^{(k,i)}|&\le \sum_{n=0}^{M}|\omega_{n}^{(k,i)}|+\sum_{m=1}^{k}\sum_{n=m}^{\infty}|\nabla^{m}I_{n,i}^{m}| \\
&\le \sum_{n=0}^{M}|\omega_{n}^{(k,i)}|+\sum_{m=1}^{k}|\nabla^{m-1} I_{m-1,i}^{m}|,
\end{split}
\end{equation*} 
which implies the result.
\end{proof}


\begin{lemma}
\label{le:3.3}
For $1\le i\le k\le 6$ and $|\xi_{0}|\le 1$, the coefficient sequence of $(1-\xi)\frac{\omega^{(k,i)}(\xi)-\omega^{(k,i)}(\xi_{0})}{\xi-\xi_{0}}$ belongs to $l^{1}$ space.
\end{lemma}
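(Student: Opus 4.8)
The plan is to reduce the statement to a single weighted-summability estimate for the backward differences of the convolution weights, and then to dispatch that estimate with the telescoping mechanism already employed in Lemma~\ref{le:3.2}. Write $a(\xi)=\omega^{(k,i)}(\xi)$, which lies in $l^{1}$ by Lemma~\ref{le:3.2}, and set $\tilde{a}(\xi):=(1-\xi)a(\xi)$, whose $n$-th coefficient is $\tilde{a}_{n}=\nabla\omega_{n}^{(k,i)}$. The first step is to record the algebraic identity
\[
(1-\xi)\frac{a(\xi)-a(\xi_{0})}{\xi-\xi_{0}}=a(\xi_{0})+\frac{\tilde{a}(\xi)-\tilde{a}(\xi_{0})}{\xi-\xi_{0}},
\]
obtained by inserting $(1-\xi)a(\xi)=\tilde{a}(\xi)$ and $(1-\xi)a(\xi_{0})=\tilde{a}(\xi_{0})-(\xi-\xi_{0})a(\xi_{0})$ into the numerator and dividing by $\xi-\xi_{0}$. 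Since $a\in l^{1}$ and $|\xi_{0}|\le 1$, the number $a(\xi_{0})$ is finite and affects only the zeroth coefficient, so it is enough to show that the coefficient sequence of $\frac{\tilde{a}(\xi)-\tilde{a}(\xi_{0})}{\xi-\xi_{0}}$ belongs to $l^{1}$.

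Expanding the divided difference, the $m$-th coefficient of $\frac{\tilde{a}(\xi)-\tilde{a}(\xi_{0})}{\xi-\xi_{0}}$ is $d_{m}=\sum_{n>m}\tilde{a}_{n}\xi_{0}^{\,n-1-m}$, so $|d_{m}|\le\sum_{n>m}|\tilde{a}_{n}|$ because $|\xi_{0}|\le 1$. Interchanging the order of summation then gives
\[
\sum_{m\ge 0}|d_{m}|\le\sum_{n\ge 1}n\,|\tilde{a}_{n}|=\sum_{n\ge 1}n\,|\nabla\omega_{n}^{(k,i)}|,
\]
and the whole lemma is thereby reduced to the weighted bound $\sum_{n}n\,|\nabla\omega_{n}^{(k,i)}|<\infty$. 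It is worth stressing that Lemma~\ref{le:3.4} is not sufficient at this point: it would only deliver $d_{m}\to 0$, whereas the $(1-\xi)$ prefactor is precisely the ingredient that must be exploited to reach the stronger $l^{1}$ conclusion.

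To prove the weighted bound I would use that, beyond a finite index $M=M(k,i)$, the explicit formulae \eqref{eq:ki31}--\eqref{eq:ki33} and their higher-degree analogues exhibit the tail structure $\omega_{n}^{(k,i)}=\sum_{m=1}^{k}\nabla^{m}I_{n+i-1,\,i}^{m}$, whence $\nabla\omega_{n}^{(k,i)}=\sum_{m=1}^{k}\nabla^{m+1}I_{n+i-1,\,i}^{m}$. By Lemma~\ref{le:Ijqr} each sequence $n\mapsto\nabla^{m+1}I_{n,i}^{m}$ has a constant sign, which lets me telescope twice, exactly as in \eqref{eq:3.3a}. Using $\sum_{n\ge 1}n v_{n}=\sum_{l\ge 1}\sum_{n\ge l}v_{n}$ for the nonnegative sequence $v_{n}=|\nabla^{m+1}I_{n+i-1,i}^{m}|$, the inner sum telescopes to $\sum_{n\ge l}v_{n}=|\nabla^{m}I_{l+i-2,i}^{m}|$, and a second telescoping (legitimate because $\nabla^{m}I^{m}$ has constant sign and its lower differences vanish at infinity, cf.\ Lemma~\ref{le:3.1}) shows $\sum_{l}|\nabla^{m}I_{l+i-2,i}^{m}|<\infty$; the finitely many small indices below the validity threshold of \eqref{eq:3.3a} are harmless. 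Summing over $m=1,\dots,k$ and restoring the finitely many initial terms $n<M$ yields $\sum_{n}n\,|\nabla\omega_{n}^{(k,i)}|<\infty$, which finishes the argument.

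The main obstacle is conceptual rather than computational: one must identify why the factor $(1-\xi)$ upgrades ``coefficients tend to zero'' into genuine $l^{1}$ membership, and the identity in the first step is designed to isolate this effect, after which everything rests on the weighted sum $\sum_{n}n\,|\nabla\omega_{n}^{(k,i)}|$. As an alternative to the double telescoping, this weighted sum could also be settled through the decay estimate $\nabla\omega_{n}^{(k,i)}=O(n^{-\alpha-2})$ read off from the integral representation \eqref{eq:nablakinqr}, since then $\sum_{n}n\cdot O(n^{-\alpha-2})=\sum_{n}O(n^{-\alpha-1})<\infty$ for $\alpha>0$; but the telescoping route has the advantage of remaining entirely inside the complete-monotonicity framework already established, using no explicit rate estimate.
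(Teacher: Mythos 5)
Your proof is correct and follows essentially the same route as the paper: your identity-based decomposition reproduces, coefficient by coefficient, the expansion the paper obtains by directly rearranging $(1-\xi)\frac{\omega^{(k,i)}(\xi)-\omega^{(k,i)}(\xi_{0})}{\xi-\xi_{0}}=\sum_{n=0}^{\infty}\omega_{n+1}^{(k,i)}\xi_{0}^{n}+\sum_{m=1}^{\infty}\bigl(\sum_{n=0}^{\infty}\nabla\omega_{n+m+1}^{(k,i)}\xi_{0}^{n}\bigr)\xi^{m}$, and your weighted bound $\sum_{n}n\,|\nabla\omega_{n}^{(k,i)}|<\infty$ is exactly the paper's double sum $\sum_{m\ge 1}\sum_{n\ge 0}|\nabla\omega_{n+m+1}^{(k,i)}|$, established there by the same sign-based double telescoping of $\nabla^{p+1}I_{\cdot,i}^{p}$ in the spirit of \eqref{eq:3.3a}, with Lemma \ref{le:3.2} covering the constant term. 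The only cosmetic difference is that you isolate the effect of the $(1-\xi)$ prefactor through the identity involving $\tilde{a}(\xi)$ rather than by direct series rearrangement.
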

\begin{proof}
According to the expression of $\omega^{(k,i)}(\xi)$, the following series can be rewritten to
\begin{equation*}
\begin{split}
(1-\xi)\frac{\omega^{(k,i)}(\xi)-\omega^{(k,i)}(\xi_{0})}{\xi-\xi_{0}}&=(1-\xi)\sum_{n=0}^{\infty}\omega_{n}^{(k,i)}\frac{\xi^{n}-\xi_{0}^{n}}{\xi-\xi_{0}} \\
                                                                   &=(1-\xi)\sum_{n=1}^{\infty}\omega_{n}^{(k,i)}\sum_{m=0}^{n-1}\xi_{0}^{n-1-m}\xi^{m} \\
                                                                   &=(1-\xi)\sum_{m=0}^{\infty}\sum_{n=0}^{\infty}\omega_{n+m+1}^{(k,i)}\xi_{0}^{n}\xi^{m} \\
                                                                   &=\sum_{n=0}^{\infty}\omega_{n+1}^{(k,i)}\xi_{0}^{n}+\sum_{m=1}^{\infty}\big(\sum_{n=0}^{\infty}\nabla \omega_{n+m+1}^{(k,i)}\xi_{0}^{n}\big)\xi^{m}.
\end{split}
\end{equation*}
On one hand, from Lemma \ref{le:3.2}, we have
\begin{equation*}
|\sum_{n=0}^{\infty}\omega_{n+1}^{(k,i)}\xi_{0}^{n}|\le \sum_{n=0}^{\infty}|\omega_{n+1}^{(k,i)}||\xi_{0}|^{n}\le \sum_{n=0}^{\infty}|\omega_{n+1}^{(k,i)}|<+\infty.
\end{equation*}
On the other hand, by the definition of $\{\nabla^{k+1}I_{n,q}^{r}\}_{n=k+1}^{\infty}$ in Lemma \ref{le:Ijqr}, it can be verified that 
\begin{equation*}
\begin{split}
\sum_{m=p}^{\infty}\sum_{n=0}^{\infty}|\nabla^{k+1} I_{m+n+1,q}^{r}|&=|\sum_{m=p}^{\infty}\sum_{n=0}^{\infty}\big(\nabla^{k} I_{m+n+1,q}^{r}-\nabla^{k} I_{m+n,q}^{r}\big)|    \\
                                          &=|\sum_{m=p}^{\infty}\big(\nabla^{k-1} I_{m,q}^{r}-\nabla^{k-1} I_{m-1,q}^{r}\big)|        \\
                                          &=|\nabla^{k-1}I_{p-1,q}^{r}|
\end{split}
\end{equation*}
for $p\ge k\ge 1$. Therefore there exists $M_{1}=M_{1}(k,i)\ge 1$ and $M_{2}=M_{2}(k,i)\ge 0$ such that
\begin{equation*}
\begin{split}
\sum_{m=1}^{\infty}\sum_{n=0}^{\infty}|\nabla \omega_{n+m+1}^{(k,i)}|&\le \sum_{m=1}^{M_{1}}\sum_{n=0}^{M_{2}}|\nabla \omega_{n+m+1}^{(k,i)}|+\sum_{p=1}^{k}\sum_{m=p}^{\infty}\sum_{n=0}^{\infty}|\nabla^{p+1}I_{m+n+1,i}^{p}|   \\
           &\le \sum_{m=1}^{M_{1}}\sum_{n=0}^{M_{2}}|\nabla \omega_{n+m+1}^{(k,i)}|+\sum_{p=1}^{k}|\nabla^{p-1}I_{p-1,i}^{p}|.
\end{split}
\end{equation*}
Combining with 
\begin{equation*}
|\sum_{n=0}^{\infty}\omega_{n+1}^{(k,i)}\xi_{0}^{n}|+\sum_{m=1}^{\infty}|\sum_{n=0}^{\infty}\nabla \omega_{n+m+1}^{(k,i)}\xi_{0}^{n}|\le \sum_{n=0}^{\infty}|\omega_{n+1}^{(k,i)}|+\sum_{m=1}^{\infty}\sum_{n=0}^{\infty} |\nabla \omega_{n+m+1}^{(k,i)}|,
\end{equation*}
we arrive at the conclusion.
\end{proof}

\begin{corollary}
\label{co:1}
For any $|\xi_{0}|\le 1$ and $1\le i\le k\le 6$, it holds that the sequence 
\begin{equation*}
(1-\xi)(1-\xi_{0})\frac{\varphi^{(k,i)}(\xi)-\varphi^{(k,i)}(\xi_{0})}{\xi-\xi_{0}}
\end{equation*} 
belongs to $l^{1}$ space, where series $\varphi^{(k,i)}(\xi)$ is defined to satisfy the relation $
\omega^{(k,i)}(\xi)=(1-\xi)\varphi^{(k,i)}(\xi)$.
\end{corollary}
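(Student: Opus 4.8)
The plan is to reduce the statement to the two $l^1$ facts already in hand: Lemma \ref{le:3.2}, which says the coefficient sequence of $\omega^{(k,i)}(\xi)$ lies in $l^1$, and Lemma \ref{le:3.3}, which controls the weighted difference quotient $(1-\xi)\frac{\omega^{(k,i)}(\xi)-\omega^{(k,i)}(\xi_0)}{\xi-\xi_0}$. Since $l^1$ is closed under finite sums, the entire proof amounts to rewriting the target quantity as a sum of these two already-controlled pieces, so the only genuine work is an algebraic rearrangement, and the subtlety lies in choosing the right (asymmetric) split.

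First I would use the defining relation $\omega^{(k,i)}(\xi)=(1-\xi)\varphi^{(k,i)}(\xi)$ to write $\varphi^{(k,i)}(\xi)=\omega^{(k,i)}(\xi)/(1-\xi)$, and likewise at $\xi_0$. Forming the difference and clearing the two denominators gives
\[
(1-\xi)(1-\xi_0)\frac{\varphi^{(k,i)}(\xi)-\varphi^{(k,i)}(\xi_0)}{\xi-\xi_0}=\frac{(1-\xi_0)\omega^{(k,i)}(\xi)-(1-\xi)\omega^{(k,i)}(\xi_0)}{\xi-\xi_0}.
\]
The crux is then to split the numerator on the right not symmetrically but as
\[
(1-\xi_0)\omega^{(k,i)}(\xi)-(1-\xi)\omega^{(k,i)}(\xi_0)=(1-\xi)\big(\omega^{(k,i)}(\xi)-\omega^{(k,i)}(\xi_0)\big)+(\xi-\xi_0)\,\omega^{(k,i)}(\xi),
\]
which one checks by expanding and collecting the coefficient of $\omega^{(k,i)}(\xi)$, namely $(1-\xi)+(\xi-\xi_0)=1-\xi_0$. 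Dividing through by $\xi-\xi_0$ yields
\[
(1-\xi)(1-\xi_0)\frac{\varphi^{(k,i)}(\xi)-\varphi^{(k,i)}(\xi_0)}{\xi-\xi_0}=(1-\xi)\frac{\omega^{(k,i)}(\xi)-\omega^{(k,i)}(\xi_0)}{\xi-\xi_0}+\omega^{(k,i)}(\xi).
\]

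Finally I would conclude by observing that the first term on the right has an $l^1$ coefficient sequence by Lemma \ref{le:3.3} and the second by Lemma \ref{le:3.2}, so their sum, and hence the left-hand side, lies in $l^1$ as claimed. I expect the main (and essentially only) obstacle to be spotting this asymmetric split: the naive symmetric decomposition $(1-\xi_0)(\omega^{(k,i)}(\xi)-\omega^{(k,i)}(\xi_0))+(\xi-\xi_0)\omega^{(k,i)}(\xi_0)$ leaves behind the bare difference quotient $\frac{\omega^{(k,i)}(\xi)-\omega^{(k,i)}(\xi_0)}{\xi-\xi_0}$ with no multiplying factor $(1-\xi)$, and nothing in the excerpt guarantees that this quotient alone belongs to $l^1$ — only its $(1-\xi)$-weighted version is controlled, precisely in Lemma \ref{le:3.3}. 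Keeping the $(1-\xi)$ attached to the difference quotient is therefore the whole point of the rearrangement.
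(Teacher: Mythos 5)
Your proposal is correct and is essentially the paper's own argument: the identity you derive, $(1-\xi)(1-\xi_{0})\frac{\varphi^{(k,i)}(\xi)-\varphi^{(k,i)}(\xi_{0})}{\xi-\xi_{0}}=(1-\xi)\frac{\omega^{(k,i)}(\xi)-\omega^{(k,i)}(\xi_{0})}{\xi-\xi_{0}}+\omega^{(k,i)}(\xi)$, is exactly the rearrangement the paper uses, with the same appeal to Lemma \ref{le:3.3} for the weighted difference quotient and Lemma \ref{le:3.2} for $\omega^{(k,i)}(\xi)$. Your closing remark about why the $(1-\xi)$ factor must stay attached to the difference quotient correctly identifies the point of the asymmetric split, which the paper performs implicitly.
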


\begin{proof}
Based on the definition of $\varphi^{(k,i)}(\xi)$, it yields that 
\begin{equation*}
\begin{split}
(1-\xi)\frac{\omega^{(k,i)}(\xi)-\omega^{(k,i)}(\xi_{0})}{\xi-\xi_{0}}&=(1-\xi)\frac{(1-\xi)\varphi^{(k,i)}(\xi)-(1-\xi_{0})\varphi^{(k,i)}(\xi_{0})}{\xi-\xi_{0}}          \\
        &=(1-\xi)(1-\xi_{0})\frac{\varphi^{(k,i)}(\xi)-\varphi^{(k,i)}(\xi_{0})}{\xi-\xi_{0}}-\omega^{(k,i)}(\xi), 
\end{split}
\end{equation*} 
because of the absolute convergence of sequences $(1-\xi)\frac{\omega^{(k,i)}(\xi)-\omega^{(k,i)}(\xi_{0})}{\xi-\xi_{0}}$ and $\omega^{(k,i)}(\xi)$ shown in Lemma \ref{le:3.2} and Lemma \ref{le:3.3}, we shall arrive at the result.  
\end{proof}

\begin{theorem}
\label{the:stabilityr}
The stability region of method $D_{k,i}^{\alpha}u_{n}=\lambda u_{n}$ is $\mathrm{S}^{(k,i)}=\mathbb{C}\backslash\{\omega^{(k,i)}(\xi): |\xi|\le 1\}$ in the cases of $1\le i\le k\le 6$.
\end{theorem}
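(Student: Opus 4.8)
The plan is to read the numerical solution off the generating-function identity \eqref{DkiTest1}. Since the coefficient sequence of $\omega^{(k,i)}(\xi)$ lies in $l^{1}$ by Lemma \ref{le:3.2}, the series $\omega^{(k,i)}(\xi)-z$ (with $z=\lambda(\Delta t)^{\alpha}$) is a well-defined element of the Wiener algebra, and solving \eqref{DkiTest1} formally yields
\begin{equation*}
u(\xi)=\frac{g^{(k,i)}(\xi)}{\omega^{(k,i)}(\xi)-z}.
\end{equation*}
A point $z$ belongs to the stability region precisely when the coefficients $u_{n}$ tend to $0$ as $n\to\infty$ for every bounded choice of starting values $\{u_{j}\}_{j=0}^{k-1}$. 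Everything therefore reduces to locating the zeros of $\omega^{(k,i)}(\xi)-z$ relative to the closed unit disc $\{|\xi|\le 1\}$, and I would prove the two inclusions that define $\mathrm{S}^{(k,i)}=\mathbb{C}\backslash\{\omega^{(k,i)}(\xi):|\xi|\le 1\}$ separately.

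For the sufficiency direction I would take $z\notin\{\omega^{(k,i)}(\xi):|\xi|\le 1\}$, which is exactly the statement that $\omega^{(k,i)}(\xi)-z\ne 0$ for all $|\xi|\le 1$. Lemma \ref{le:3.2} supplies the absolute-convergence hypothesis of Theorem \ref{th:pw} (Wiener's lemma), so $1/(\omega^{(k,i)}(\xi)-z)$ again has an $l^{1}$ coefficient sequence. By Lemma \ref{le:3.1} the coefficients of $g^{(k,i)}(\xi)$ form a null sequence; since the convolution of an $l^{1}$ sequence with a null sequence is again a null sequence, the coefficients $u_{n}$ of $u(\xi)=g^{(k,i)}(\xi)\cdot\big(\omega^{(k,i)}(\xi)-z\big)^{-1}$ tend to $0$. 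Hence every such $z$ lies in the stability region.

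For the reverse inclusion I would assume $z=\omega^{(k,i)}(\xi_{0})$ for some $|\xi_{0}|\le 1$ and exhibit bounded starting values producing a non-decaying solution. When $|\xi_{0}|<1$ the denominator has a zero strictly inside the disc; choosing the finitely many $\{u_{j}\}$ so that $g^{(k,i)}(\xi_{0})\ne 0$ makes $\xi_{0}$ a genuine pole of $u(\xi)$, whence $u_{n}$ grows like $\xi_{0}^{-n}$ and cannot converge to $0$. The boundary case $|\xi_{0}|=1$ is the delicate one, since the zero now sits on the unit circle and the solution is only marginally damped. To treat it I would factor out the consistency zero through $\omega^{(k,i)}(\xi)=(1-\xi)\varphi^{(k,i)}(\xi)$ and write $\omega^{(k,i)}(\xi)-z=(\xi-\xi_{0})\,h(\xi)$; the sharper summability of Lemma \ref{le:3.3} and Corollary \ref{co:1}, combined with the null-sequence property of difference quotients from Lemma \ref{le:3.4}, would be used to control $h$ and its reciprocal on the closed disc, after which the factor $(\xi-\xi_{0})^{-1}$ with $|\xi_{0}|=1$ contributes coefficients of modulus $1$ and forces $u_{n}\not\to 0$. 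This excludes every boundary point and pins the set down exactly.

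The hard part will be precisely this boundary analysis at $|\xi_{0}|=1$: away from the circle both inclusions follow cleanly from Wiener's lemma and the $l^{1}$/null-sequence bookkeeping, but on $\{|\xi|=1\}$ neither $\omega^{(k,i)}(\xi)-z$ nor its difference quotient need be invertible in the Wiener algebra, so one must exploit the $(1-\xi)$ factorization and the refined absolute convergence of Lemma \ref{le:3.3} and Corollary \ref{co:1} to separate the single marginal zero from the genuinely decaying remainder and conclude non-decay.
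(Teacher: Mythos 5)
Your sufficiency direction coincides with the paper's: for $z\notin\{\omega^{(k,i)}(\xi):|\xi|\le 1\}$ you invoke Lemma \ref{le:3.2} and Wiener's lemma (Theorem \ref{th:pw}) to get an $l^{1}$ reciprocal, then convolve with the null sequence from Lemma \ref{le:3.1}; that is exactly the paper's argument (the paper's split into $|z|\le 1$ and $|z|>1$ is cosmetic). Your interior case $|\xi_{0}|<1$ of the reverse inclusion is also sound in spirit, though it silently requires starting data with $g^{(k,i)}(\xi_{0})\ne 0$; the paper's choice $u_{0}=\cdots=u_{k-1}\ne 0$ furnishes this, since exactness of the scheme on constants gives $g^{(k,i)}(\xi)=u_{0}\,\omega^{(k,i)}(\xi)/(1-\xi)$, whence $g^{(k,i)}(\xi_{0})=u_{0}z/(1-\xi_{0})\ne 0$ whenever $z\ne 0$, and the exceptional point $z=\omega^{(k,i)}(\xi_{0})=0$ (which your pole argument cannot reach, because of the cancellation) is excluded directly: there $u(\xi)=u_{0}/(1-\xi)$, i.e. $u_{n}\equiv u_{0}\not\to 0$.

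The genuine gap is your boundary case $|\xi_{0}|=1$. You factor $\omega^{(k,i)}(\xi)-z=(\xi-\xi_{0})h(\xi)$ with $h(\xi)=\frac{\omega^{(k,i)}(\xi)-\omega^{(k,i)}(\xi_{0})}{\xi-\xi_{0}}$ and propose to ``control $h$ and its reciprocal on the closed disc,'' but none of Lemma \ref{le:3.3}, Lemma \ref{le:3.4} or Corollary \ref{co:1} yields invertibility of $h$ in the Wiener algebra: Lemma \ref{le:3.3} only makes $(1-\xi)h(\xi)$ summable and Lemma \ref{le:3.4} only makes the coefficients of $h$ a null sequence, while $h$ may well vanish somewhere in $|\xi|\le 1$ (nothing in the paper rules out further zeros of $\omega^{(k,i)}-z$ for $k\le 6$). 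Without inverting $h$ you have no decomposition of $u(\xi)$ on which the factor $(\xi-\xi_{0})^{-1}$ can act, so the step ``modulus-one coefficients force $u_{n}\not\to 0$'' does not go through. The paper avoids inversion altogether: with the constant starting data above, it multiplies the relation $\bigl(\omega^{(k,i)}(\xi)-\omega^{(k,i)}(\xi_{0})\bigr)u(\xi)=g^{(k,i)}(\xi)$ into the form $(1-\xi)h(\xi)\,u(\xi)=u_{0}h(\xi)+u_{0}\,\omega^{(k,i)}(\xi_{0})/(\xi-\xi_{0})$, and compares coefficient asymptotics: if $u_{n}\to 0$, the left side is a convolution of an $l^{1}$ sequence (Lemma \ref{le:3.3}) with a null sequence, hence null; on the right, $h$ contributes a null sequence (Lemma \ref{le:3.4}) while $\omega^{(k,i)}(\xi_{0})/(\xi-\xi_{0})$ contributes coefficients $-u_{0}\,\omega^{(k,i)}(\xi_{0})\xi_{0}^{-n-1}$ of modulus bounded below for $|\xi_{0}|\le 1$ — a contradiction that handles the interior and boundary cases uniformly. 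You should either adopt this multiply-and-compare device or supply an actual Wiener-algebra invertibility proof for $h$, which the paper's lemmas do not provide.
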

\begin{remark}
The definition of stability region $\mathrm{S}^{(k,i)}$ of method $D_{k,i}^{\alpha}u_{n}=\lambda u_{n}$ is the set of $z=\lambda (\Delta t)^{\alpha}\in\mathbb{C}$ with $\Delta t>0$ for which there is $u_{n}\to 0$ as $n\to\infty$ whenever the starting values $u_{0}, \cdots, u_{k-1}$ are bounded.
\end{remark}
\begin{proof}
The provement of $S^{(k,i)}=\mathbb{C}\backslash\{\omega^{(k,i)}(\xi): |\xi|\le 1\}$ is equivalent with proving both $S^{(k,i)}\supseteq\mathbb{C}\backslash\{\omega^{(k,i)}(\xi): |\xi|\le 1\}$ and $S^{(k,i)}\subseteq\mathbb{C}\backslash\{\omega^{(k,i)}(\xi): |\xi|\le 1\}$, i.e., to prove that for any $z\in \mathbb{C}\backslash\{\omega^{(k,i)}(\xi): |\xi|\le 1\}$, there is $z\in S^{(k,i)}$ and for any $z\not \in \mathbb{C}\backslash\{\omega^{(k,i)}(\xi): |\xi|\le 1\}$, there is $z\not\in S^{(k,i)}$.

On one hand, if $z\in \mathbb{C}\backslash\{\omega^{(k,i)}(\xi): |\xi|\le 1\}$ and $|z|\le 1$, there is $z-\omega^{(k,i)}(\xi)\neq 0$ for $|\xi|\le 1$, thus according to Lemma \ref{le:3.1}, Lemma \ref{le:3.2} and Theorem \ref{th:pw}, it yields that the coefficient sequence of reciprocal of $z-\omega^{(k,i)}(\xi)$ is in $l^{1}$ and coefficient sequence of series $g^{(k,i)}(\xi)$ tends to zero.

If $|z|>1$, formula \eqref{DkiTest1} can be rewritten to
\begin{equation*}
u(\xi)=\frac{\frac{g^{(k,i)}(\xi)}{z}}{\frac{\omega^{(k,i)}(\xi)}{z}-1},
\end{equation*}
in which case the coefficient sequence of reciprocal of $\frac{\omega^{(k,i)}(\xi)}{z}-1$ is in $l^{1}$, and the coefficient sequence of series $\frac{g^{(k,i)}(\xi)}{z}$ converges to zero. In addition, assume that $\lim\limits_{n\to\infty}\sum\limits_{j=0}^{n}|l_{i}|=L<+\infty$ and $\lim\limits_{j\to\infty}c_{j}=0$, it holds that $\lim\limits_{n\to\infty}\sum\limits_{j=0}^{n}l_{n-j}c_{j}=0$, thus, implies that $u_{n}\to 0$ as $n\to\infty$.

On the other hand, assume that  for any $z=\omega^{(k,i)}(\xi_{0})$ with $|\xi_{0}|\le 1$, according to \eqref{DkiTest1} the solution satisfies that
\begin{equation}
\label{eq:3.3b}
\left(\omega^{(k,i)}(\xi)-\omega^{(k,i)}(\xi_{0})\right)u(\xi)=g^{(k,i)}(\xi).
\end{equation}
Note that method \eqref{Dki1} is exact for constant function, which leads to
\begin{equation*}
\sum_{j=0}^{k-1}w_{n,j}^{(k,i)}+\sum_{j=0}^{n}\omega_{n-j}^{(k,i)}=0,\hspace{0.5cm} n\ge k,
\end{equation*}
and a corresponding formal power series satisfies that
\begin{equation*}
\begin{split}
 &\sum_{n=k}^{\infty}\left(\sum_{j=0}^{k-1}w_{n,j}^{(k,i)}+\sum_{j=0}^{n}\omega_{n-j}^{(k,i)}\right)\xi^{n-k} \\
=&\sum_{n=0}^{\infty}\left(\sum_{j=0}^{k-1}w_{n+k,j}^{(k,i)}+\sum_{j=0}^{n+k}\omega_{n+k-j}^{(k,i)}\right)\xi^{n} \\
=&\sum_{n=0}^{\infty}\sum_{j=0}^{k-1}\left( w_{n+k,j}^{(k,i)}+\omega_{n+k-j}^{(k,i)} \right)\xi^{n}+\frac{\omega^{(k,i)}(\xi)}{1-\xi}=0.   \\
\end{split}
\end{equation*}
Assume that $u_{0}=\cdots=u_{k-1}\neq0$, then according to the expression of $g^{(k,i)}(\xi)$, it holds that
$
g^{(k,i)}(\xi)=u_{0}\frac{\omega^{(k,i)}(\xi)}{1-\xi}$. In the case of $\omega^{(k,i)}(\xi_{0})=0$, it yields that $u(\xi)=\frac{u_{0}}{1-\xi}$, which means that $u_{n}=u_{0}$ for any $n\in \mathbb{N}$. And for the rest case, there is
\begin{equation*}
u(\xi)(1-\xi)\frac{\omega^{(k,i)}(\xi)-\omega^{(k,i)}(\xi_{0})}{\xi-\xi_{0}}=u_{0}\frac{\omega^{(k,i)}(\xi)-\omega^{(k,i)}(\xi_{0})}{\xi-\xi_{0}}+u_{0}\frac{\omega^{(k,i)}(\xi_{0})}{\xi-\xi_{0}}.
\end{equation*}
If assume that $u_{n}\to0$ as $n\to\infty$, since according to Lemma \ref{le:3.3}, the coefficient  sequence of $(1-\xi)\frac{\omega^{(k,i)}(\xi)-\omega^{(k,i)}(\xi_{0})}{\xi-\xi_{0}}$ is in $l_{1}$,  which derives that the coefficient sequence of $u(\xi)(1-\xi)\frac{\omega^{(1,1)}(\xi)-\omega^{(1,1)}(\xi_{0})}{\xi-\xi_{0}}$ tends to zero, in addition, according to Lemma \ref{le:3.4}, it yields that the coefficient sequence of $\frac{\omega^{(k,i)}(\xi)-\omega^{(k,i)}(\xi_{0})}{\xi-\xi_{0}}$ converges to zero, however, the divergence of the coefficient sequence of $\frac{1}{\xi-\xi_{0}}$ for $|\xi_{0}|\le 1$ leads to the contradiction.  Thus, it holds that there exist some nonzero bounded initial values $\{u_{i}\}_{i=0}^{k-1}$ such that $u_{n} \not\to 0$ as $n\to\infty$, which indicates that $z\not\in \mathrm{S}^{(k,i)}$.
\end{proof}

According to the definition of $A(\theta)$-stability \cite{HairerWN:1991} in usual case, we define the $A(\theta)$-stability in the following sense of $0<\alpha<1$.
\begin{definition}
A method is said to be $A(\theta)$-stable for $\theta\in[0, \pi-\frac{\alpha\pi}{2})$, if the sector
\begin{equation*}
S_{\theta}=\{z: |\mathrm{arg}(-z)|\le \theta, \hspace{0.2cm}z\neq 0\}
\end{equation*}
is contained in the stability region.
\end{definition}

\begin{theorem}\label{th:Api}
The methods \eqref{DkiTest} are $A(\frac{\pi}{2})$-stable in the cases of $1\le i\le k\le 2$.
\end{theorem}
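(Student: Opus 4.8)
The plan is to combine the description of the stability region in Theorem~\ref{the:stabilityr} with the complete monotonicity of Lemma~\ref{le:Ijqr} and the Hausdorff moment representation of Theorem~\ref{th:cm}. Since $S_{\pi/2}=\{z:\operatorname{Re}z\le 0,\ z\ne 0\}$ and the stability region equals $\mathbb{C}\setminus\{\omega^{(k,i)}(\xi):|\xi|\le 1\}$, proving $A(\tfrac{\pi}{2})$-stability is equivalent to showing that the image of the closed unit disc under $\omega^{(k,i)}$ avoids the punctured closed left half-plane, i.e.
\begin{equation*}
\operatorname{Re}\omega^{(k,i)}(\xi)\ge 0 \quad\text{for } |\xi|\le 1,
\end{equation*}
together with the refinement that $\operatorname{Re}\omega^{(k,i)}(\xi)=0$ forces $\omega^{(k,i)}(\xi)=0$. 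By Lemma~\ref{le:3.2} the coefficients of $\omega^{(k,i)}$ lie in $l^1$, so $\omega^{(k,i)}$ is continuous on $\{|\xi|\le 1\}$ and analytic inside; hence $\operatorname{Re}\omega^{(k,i)}$ is harmonic, and by the minimum principle it attains its minimum on the circle $|\xi|=1$. It therefore suffices to prove $\operatorname{Re}\omega^{(k,i)}(e^{\mathrm{i}\theta})\ge 0$.

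For $(k,i)=(1,1)$ this is transparent: Lemma~\ref{le:Ijqr} gives $\omega_0^{(1,1)}=I_0>0$ and $\omega_n^{(1,1)}=\nabla I_n\le 0$ for $n\ge 1$, while exactness on constants (shown inside the proof of Theorem~\ref{the:stabilityr}) yields $\sum_{n\ge 0}\omega_n^{(1,1)}=\omega^{(1,1)}(1)=0$, so that $\operatorname{Re}\omega^{(1,1)}(e^{\mathrm{i}\theta})=\omega_0+\sum_{n\ge 1}\omega_n\cos n\theta\ge \omega^{(1,1)}(1)=0$. Representing the completely monotone sequence $\{I_n\}$ via Theorem~\ref{th:cm} as $I_n=\int_0^1 u^n\,\mathrm{d}\mu(u)$ with $\mu\ge 0$, this even factorizes as $\operatorname{Re}\omega^{(1,1)}(e^{\mathrm{i}\theta})=(1-\cos\theta)\int_0^1(1+u)\,\mathrm{d}\mu(u)$.

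For the harder cases $k=2$ I would push the same representation. From $\omega_n^{(2,1)}=\nabla I_n+\nabla^2 I_{n,1}^2$ one gets $\omega^{(2,1)}(\xi)=(1-\xi)J(\xi)+(1-\xi)^2K(\xi)$ with $J(\xi)=\sum_n I_n\xi^n$ and $K(\xi)=\sum_n I_{n,1}^2\xi^n$, both having completely monotone coefficients by Lemma~\ref{le:Ijqr}, hence $J=\int_0^1\frac{\mathrm{d}\mu_1}{1-u\xi}$ and $K=\int_0^1\frac{\mathrm{d}\mu_2}{1-u\xi}$ with $\mu_1,\mu_2\ge 0$ by Theorem~\ref{th:cm}. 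Computing the boundary values of the kernels, for $|\xi|=1$ and $c=\cos\theta$,
\begin{equation*}
\operatorname{Re}\frac{1-\xi}{1-u\xi}=(1+u)(1-c),\qquad
\operatorname{Re}\frac{(1-\xi)^2}{1-u\xi}=-2(1-c)(c-u),
\end{equation*}
so $\operatorname{Re}\omega^{(2,1)}(e^{\mathrm{i}\theta})=(1-c)\,B(c)$ with $B(c)=\int_0^1(1+u)\,\mathrm{d}\mu_1-2\int_0^1(c-u)\,\mathrm{d}\mu_2$ affine and nonincreasing in $c$. As $1-c\ge 0$, nonnegativity reduces to the scalar inequality $B(1)\ge 0$, that is $I_0+I_1\ge 2\,(I_{0,1}^2-I_{1,1}^2)$, which I would verify by evaluating the elementary integrals defining $I_0,I_1,I_{0,1}^2,I_{1,1}^2$; the inequality is strict for $\alpha\in(0,1)$ and degenerates to equality only as $\alpha\to 1$, in agreement with the $A$-stability of BDF2. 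The case $(2,2)$ follows the same pattern once the finitely many start-up weights $\omega_0,\omega_1,\omega_2$, which carry the correction terms and an index shift, are split off as a degree-two polynomial added to a completely monotone convolution tail.

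The imaginary axis is then excluded by the factor $1-c$: on $|\xi|=1$ the real part vanishes only at $\theta=0$, where $\omega^{(k,i)}(1)=0$; and an interior zero of $\operatorname{Re}\omega^{(k,i)}$ would force $\operatorname{Re}\omega^{(k,i)}\equiv 0$ by the minimum principle, hence $\omega^{(k,i)}$ a purely imaginary constant, which is impossible since its coefficients are real and not all zero. Thus the image avoids $\{\operatorname{Re}z\le 0\}\setminus\{0\}$ and $S_{\pi/2}$ lies in the stability region. I expect the genuine obstacle to be the $k=2$ boundary estimate: because the convolution weights are there not sign-definite (for instance $\omega_2^{(2,1)}>0$), the elementary summation used for $(1,1)$ breaks down, and one must isolate the dominant completely monotone part, pass to the moment representation, and dominate the indefinite $(1-\xi)^2$ contribution by proving the sharp scalar inequality $B(1)\ge 0$; the bookkeeping for $(2,2)$ is heavier still because of the start-up corrections and the index shift in its generating function.
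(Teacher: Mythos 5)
Your frame is sound, and parts of it are genuinely fine: reducing to the boundary circle via harmonicity of $\mathrm{Re}\,\omega^{(k,i)}$ and the minimum principle (legitimate since Lemma~\ref{le:3.2} gives $l^1$ coefficients, hence continuity on the closed disc), and your elementary $(1,1)$ argument ($\omega_0=I_0>0$, $\omega_n=\nabla I_n\le 0$, $\sum_n\omega_n=0$) is complete and arguably cleaner than the paper's route, which instead proves a strictly positive lower bound for $|\xi|<1$ and passes to the boundary by radial limits. But precisely at what you yourself identify as the genuine obstacle, the $k=2$ boundary estimate, your computation fails. On $|\xi|=1$ with $c=\cos\theta$ the correct kernel values are
\begin{equation*}
\mathrm{Re}\,\frac{1-\xi}{1-u\xi}=\frac{(1+u)(1-c)}{1-2uc+u^{2}},\qquad
\mathrm{Re}\,\frac{(1-\xi)^{2}}{1-u\xi}=\frac{-2(1-c)(c-u)}{1-2uc+u^{2}},
\end{equation*}
and you have dropped the denominators $|1-u\xi|^{2}=1-2uc+u^{2}$. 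Since these depend on the integration variable $u$, they cannot be pulled out of the moment integrals: your $B(c)$ is not affine in $c$, the reduction of nonnegativity to the single scalar inequality $B(1)\ge 0$ collapses, and the inequality you propose to verify, $I_0+I_1\ge 2(I_{0,1}^2-I_{1,1}^2)$, is an artifact of the missing factor — with the denominators restored, the analogue of $B(1)$ involves $\sum_n (n+1)I_n$-type sums and diverges, because $I_n\sim n^{-\alpha}/\Gamma(1-\alpha)$ is not summable. The sign-indefinite contribution of $\mathrm{Re}\,\frac{(1-\xi)^2}{1-u\xi}$, which is negative for $u<c$, is therefore never actually dominated, and nothing in your outline repairs this.

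What is needed — and what the paper does — is a different algebraic splitting so that only kernels with nonnegative real part on the closed disc appear. Writing $(1-\xi)=-2+(3-\xi)$ gives $\omega^{(2,1)}(\xi)=(1-\xi)\bigl(I(\xi)-2I_1^2(\xi)+(3-\xi)I_1^2(\xi)\bigr)$; the sequence $I_n-2I_{n,1}^2$ is completely monotone by Lemma~\ref{le:1.5} (its defining integrand is $2(1-s)\ge 0$), so Theorem~\ref{th:cm} applies to each piece separately, and positivity of the second kernel follows from the pointwise inequality $3-4|\xi|\cos\theta+|\xi|^{2}\cos 2\theta\ge 2(1-|\xi|\cos\theta)^{2}$. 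The $(2,2)$ case is not "the same pattern with start-up weights split off": it additionally requires the identity $I_n+I_{n,2}^2=I_{n,1}^2$ to reorganize the shifted series, producing the extra kernel $\frac{1-\xi^{2}}{1-r\xi}$ whose real-part positivity must be checked on its own. With such a decomposition in hand, your boundary-plus-minimum-principle reduction would indeed finish the proof; as written, however, the $k=2$ step contains a genuine gap.
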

\begin{proof}
In view of the definition of $A(\theta)$-stability, in particular, when $\theta=\frac{\pi}{2}$, it suffices to prove that $\mathrm{S}_{\frac{\pi}{2}}\subseteq S^{(k,i)}$ for $1\le i\le k\le 2$, i.e., to prove $\omega^{(k,i)}(\xi)=0$ for some $|\xi|\le 1$ and $\mathrm{Re}(\omega^{(k,i)}(\xi))>0$ otherwise.

First of all, it can be readily checked that $\omega^{(k,i)}(1)=0$, which implies $0\not \in \mathrm{S}_{\frac{\pi}{2}}$. In the case of $(k,i)=(1,1)$, resulting from the expression of $\omega^{(1,1)}(\xi)$, there is
\begin{equation}
\label{eq:omega11}
\omega^{(1,1)}(\xi)=I_{0}+\sum_{j=1}^{\infty}\nabla I_{j}\xi^{j}=(1-\xi)I(\xi),
\end{equation}
where $I(\xi)=\sum\limits_{n=0}^{\infty}I_{n}\xi^{n}$.
Since, according to Lemma \ref{le:Ijqr} and Theorem \ref{th:cm}, we have
\begin{equation}
\label{eq:ln}
I_{n}=\int_{0}^{1}r^{n}\mathrm{d}\sigma(r), \hspace{0.4cm} n\in \mathbb{N},
\end{equation}
where $\sigma(r)$ is a non-decreasing function,
then suppose that $|\xi|<1$, substituting \eqref{eq:ln} into \eqref{eq:omega11} yields that
\begin{equation*}
\mathrm{Re}\Big(\omega^{(1,1)}(\xi)\Big)=\mathrm{Re}\Big((1-\xi)\sum_{n=0}^{\infty}\int_{0}^{1}r^{n}\mathrm{d}\sigma(r)\xi^{n}\Big)=\int_{0}^{1}\mathrm{Re}\Big(\frac{1-\xi}{1-r\xi}\Big)\mathrm{d}\sigma(r).
\end{equation*}
Let $\xi=|\xi|(\cos\theta+i\sin\theta)$, there is
\begin{equation*}
\frac{1-\xi}{1-r\xi}
                           =\frac{\left(1-(r+1)|\xi|\cos\theta+r|\xi|^{2}\right)+i\left((r-1)|\xi|\sin\theta\right)}{(1-r|\xi|\cos\theta)^{2}+(r|\xi|\sin\theta)^{2}},
\end{equation*}
and for $0\le r\le1$ and $|\xi|<1$, it holds that
\begin{equation*}
\begin{split}
&1-(r+1)|\xi|\cos\theta+r|\xi|^{2}\ge \min\left((1-|\xi|\cos\theta)^{2},1-|\xi|\cos\theta\right), \\
&1-2r|\xi|\cos\theta+r^{2}|\xi|^{2}\le (1+r|\xi|)^{2}\le 4,
\end{split}
\end{equation*}
which arrives at
\begin{equation*}
\int_{0}^{1}\mathrm{Re}\left(\frac{1-\xi}{1-r\xi}\right)\mathrm{d}\sigma(r)
                                                    \ge\frac{\min\left((1-|\xi|\cos\theta)^{2},1-|\xi|\cos\theta\right)}{4}I_{0}.
\end{equation*}

In other case of $(k,i)=(2,1)$, from the definition of $\omega^{(2,1)}(\xi)$, it induces that
\begin{equation}\label{eq:fomega21}
\begin{split}
\omega^{(2,1)}(\xi)=&\sum_{n=0}^{\infty}\left(\nabla I_{n}+\nabla^{2}I_{n,1}^{2}\right)\xi^{n}  \\
                             =&(1-\xi)I(\xi)+(1-\xi)^{2}I_{1}^{2}(\xi) \\
                             =&(1-\xi)\left(I(\xi)-2I_{1}^{2}(\xi)+(3-\xi)I_{1}^{2}(\xi)\right),
\end{split}
\end{equation}
where
\begin{equation*}
I(\xi)=\sum_{n=0}^{\infty}I_{n}\xi^{n},\hspace{0.7cm} I_{1}^{2}(\xi)=\sum_{n=0}^{\infty}I_{n,1}^{2}\xi^{n}.
\end{equation*}
According to Lemma \ref{le:Ijqr}, Corollary \ref{co:s} and Theorem \ref{th:cm},
there exist non-decreasing functions $\upsilon$ and $\gamma$, respectively, such that
\begin{equation}
\label{eq:l12}
I_{n}-2I_{n,1}^{2}=\int_{0}^{1}r^{n}\mathrm{d}\upsilon(r), \hspace{0.3cm}n=0,1,\cdots,
\end{equation}
and
\begin{equation}
\label{eq:ln12}
I_{n,1}^{2}=\int_{0}^{1}r^{n}\mathrm{d}\gamma(r),\hspace{0.5cm}n=0,1,\cdots.
\end{equation}
Then for $|\xi|<1$, in place of $\omega^{(2,1)}(\xi)$, we can get
\begin{equation*}
\mathrm{Re}\Big(\omega^{(2,1)}(\xi)\Big)=\int_{0}^{1}\mathrm{Re}\Big(\frac{1-\xi}{1-r\xi}\Big)\mathrm{d}\upsilon(r)+\int_{0}^{1}\mathrm{Re}\Big(\frac{(1-\xi)(3-\xi)}{1-r\xi}\Big)\mathrm{d}\gamma(r).
\end{equation*}
Moreover, it indicates
\begin{equation*}
\begin{split}
\frac{(1-\xi)(3-\xi)}{1-r\xi}
                                       =&\frac{(3-4|\xi|\cos\theta+|\xi|^{2}\cos2\theta)(1-r|\xi|\cos\theta)+(4-2|\xi|\cos\theta)r|\xi|^{2}\sin^{2}\theta}{(1-r|\xi|\cos\theta)^{2}+(r|\xi|\sin\theta)^{2}} \\
                                       &\hspace{0.5cm}+i\frac{(3r-|\xi|^{2}r-4+2|\xi|\cos\theta)|\xi|\sin\theta}{(1-r|\xi|\cos\theta)^{2}+(r|\xi|\sin\theta)^{2}},
\end{split}
\end{equation*}
since
\begin{equation*}
\begin{split}
3-4|\xi|\cos\theta+|\xi|^{2}\cos2\theta=3-4|\xi|\cos\theta+2|\xi|^{2}\cos^{2}\theta-|\xi|^{2}\ge 2(1-|\xi|\cos\theta)^{2},
\end{split}
\end{equation*}
there is
\begin{equation*}
\int_{0}^{1}\mathrm{Re}\Big(\frac{(1-\xi)(3-\xi)}{1-r\xi}\Big)\mathrm{d}\gamma(r)
\ge\frac{\min\Big((1-|\xi|\cos\theta)^{3},(1-|\xi|\cos\theta)^{2}\Big)}{2}I_{0,1}^{2}.
\end{equation*}
For the rest case of $(k,i)=(2,2)$, we begin with the equivalent form of $\omega^{(2,2)}(\xi)$, which satisfies that
\begin{equation}\label{eq:fomega22}
\begin{split}
\omega^{(2,2)}(\xi)=&I_{0}(1-\xi)+I_{0,1}^{2}(1-\xi)^{2}+(1-\xi)\sum_{n=0}^{\infty}I_{n+1}\xi^{n}+(1-\xi)^{2}\sum_{n=0}^{\infty}I_{n+1,2}^{2}\xi^{n}         \\
                             =&I_{0,1}^{2}(1-\xi)(3-\xi)+(1-\xi^{2})\sum_{n=0}^{\infty}I_{n+1,1}^{2}\xi^{n}+(1-\xi)\left(I(\xi)-2I_{1}^{2}(\xi)\right),
\end{split}
\end{equation}
since for any $n\ge 0$, because of the relation $I_{n}+I_{n,2}^{2}=I_{n,1}^{2}$, there is
\begin{equation*}
\begin{split}
&(1-\xi)\sum_{n=0}^{\infty}I_{n+1}\xi^{n}+(1-\xi)^{2}\sum_{n=0}^{\infty}I_{n+1,2}^{2}\xi^{n} \\
=&(1-\xi)\Big(\sum_{n=0}^{\infty}I_{n+1,1}^{2}\xi^{n}-\xi\sum_{n=0}^{\infty}I_{n+1,2}^{2}\xi^{n}\Big)      \\
=&(1-\xi^{2})\sum_{n=0}^{\infty}I_{n+1,1}^{2}\xi^{n}+(1-\xi)\sum_{n=0}^{\infty}\left(I_{n+1}-2I_{n+1,1}^{2}\right)\xi^{n+1}\\
=&(1-\xi^{2})\sum_{n=0}^{\infty}I_{n+1,1}^{2}\xi^{n}+(1-\xi)\left(I(\xi)-2I_{1}^{2}(\xi)-(I_{0}-2I_{0,1}^{2})\right).
\end{split}
\end{equation*}
Consequently, suppose that $|\xi|<1$, substituting conclusions \eqref{eq:l12} and \eqref{eq:ln12} into $\omega^{(2,2)}(\xi)$, we have
\begin{equation*}
\begin{split}
\mathrm{Re}\Big(\omega^{(2,2)}(\xi)\Big)=&\int_{0}^{1}\mathrm{Re}\Big((1-\xi)(3-\xi)\Big)\mathrm{d}\gamma(r)    \\
                                                    &+\int_{0}^{1}r\mathrm{Re}\Big(\frac{1-\xi^{2}}{1-r\xi}\Big)\mathrm{d}\gamma(r)+\int_{0}^{1}\mathrm{Re}\Big(\frac{1-\xi}{1-r\xi}\Big)\mathrm{d}\upsilon(r),
\end{split}
\end{equation*}
furthermore, there is
\begin{equation*}
\begin{split}
\frac{1-\xi^{2}}{1-r\xi}
                                 =&\frac{(1-|\xi|^{2}\cos2\theta)(1-r|\xi|\cos\theta)+r|\xi|^{3}\sin\theta\sin2\theta}{(1-r|\xi|\cos\theta)^{2}+(r|\xi|\sin\theta)^{2}} \\
                                 &+i\frac{(1-|\xi|^{2}\cos2\theta)r|\xi|\sin\theta-(1-r|\xi|\cos\theta)\rho^{2}\sin2\theta}{(1-r|\xi|\cos\theta)^{2}+(r|\xi|\sin\theta)^{2}}.
\end{split}
\end{equation*}
Since for $0\le r\le 1$, it holds that
\begin{equation*}
\begin{split}
  &(1-|\xi|^{2}\cos2\theta)(1-r|\xi|\cos\theta)+r|\xi|^{3}\sin\theta\sin2\theta  \\
=&1-|\xi|^{2}\cos2\theta-r|\xi|\cos\theta+r|\xi|^{3}\cos\theta \\
\ge&(1-|\xi|^{2})(1-|\xi||\cos\theta|),
\end{split}
\end{equation*}
then, we may see that
\begin{equation*}
\begin{split}
\int_{0}^{1}r\mathrm{Re}\Big(\frac{1-\xi^{2}}{1-r\xi}\Big)\mathrm{d}\gamma(r)
\ge& \frac{(1-|\xi|^{2})(1-|\xi||\cos\theta|)}{4}\int_{0}^{1}r\mathrm{d}\gamma(r)    \\
 =&\frac{(1-|\xi|^{2})(1-|\xi||\cos\theta|)}{4}I_{1,1}^{2}.
\end{split}
\end{equation*}
As a result, for $1\le i\le k\le 2$, it demonstrates that
\begin{equation*}
\mathrm{Re}\Big(\omega^{(k,i)}(\xi)\Big)\ge\frac{\min\left((1-|\xi|\cos\theta)^{2}, 1-|\xi|\cos\theta\right)}{4}I_{0}>0,\hspace{0.6cm}|\xi|<1.
\end{equation*}
In addition, according to Lemma \ref{le:3.3}, there exists constant $M^{(k,i)}>0$ such that
\begin{equation*}
|\omega^{(k,i)}(\xi)-\omega^{(k,i)}(\xi_{0})| \le \frac{M^{(k,i)}}{|1-\xi |} |\xi-\xi_{0}|,\hspace{0.5cm} \xi\ne 1,
\end{equation*}
which yields the pointwise continuity of $\omega^{(k,i)}(\xi)$ for $|\xi|\le 1$ with the exception of $\xi=1$. Therefore for any fixed $\xi$ lying on the unit circle, the angle of which satisfying $\mathrm{arg}(\xi)=\theta_{\xi}\neq 0$, correspondingly, there exists a sequence $\xi_{n}=(1-\frac{1}{n})\xi$ with $|\xi_{n}|<1$ for any $n=1, 2, \cdots$, such that
\begin{equation*}
\mathrm{Re}\Big(\omega^{(k,i)}(\xi)\Big)=\lim\limits_{n\to\infty}\mathrm{Re}\Big(\omega^{(k,i)}(\xi_{n})\Big)\ge \frac{I_{0}}{4}\min\left( (1-\cos\theta_{\xi})^{2}, 1-\cos\theta_{\xi}\right)>0.
\end{equation*} 
\end{proof}

Theorem \ref{th:Api} shows that all the rest zeros of series $\omega^{(k,i)}(\xi)~(1\le i\le k\le2)$ are outside the unit disc besides $\xi=1$, followed by which we next consider the location of zeros of series $\omega^{(k,i)}(\xi)$ in the examples of $1\le i\le k\le 3$ and confirm that $\xi=1$ is a simple zero. The following result can be considered as a generalisation of the strong root condition.
\begin{theorem}
\label{coro:1}
 For $1\le i\le k\le 3$, the series $\omega^{(k,i)}(\xi)$ satisfies the following conditions: 
 \begin{description}
  \item[i).] $\omega^{(k,i)}(\xi)\ne 0$ within the unit circle $|\xi|\le 1$ and $\xi\ne 1$;
  \item[ii).] $\xi=1$ is the simple zero.
\end{description}
\end{theorem}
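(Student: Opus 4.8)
The plan is to reduce both assertions to the nonvanishing of the cofactor $\varphi^{(k,i)}$ in the factorisation $\omega^{(k,i)}(\xi)=(1-\xi)\varphi^{(k,i)}(\xi)$ of Corollary~\ref{co:1}. In the range $1\le i\le k\le 2$ part i) requires nothing new: Theorem~\ref{th:Api} already gives $\mathrm{Re}\,\omega^{(k,i)}(\xi)>0$ for $|\xi|\le 1$, $\xi\ne 1$, and a function with strictly positive real part cannot vanish. The genuinely new case is $k=3$, which I would attack by the same real-part technique, treating $(3,1)$, $(3,2)$, $(3,3)$ in turn. Using the generating identities $\sum_{n}\nabla^{m}I_{n,q}^{m}\,\xi^{n}=(1-\xi)^{m}I_{q}^{m}(\xi)$ one obtains, for instance,
\begin{equation*}
\omega^{(3,1)}(\xi)=(1-\xi)\Big(I(\xi)+(1-\xi)I_{1}^{2}(\xi)+(1-\xi)^{2}I_{1}^{3}(\xi)\Big),
\end{equation*}
and by Lemma~\ref{le:Ijqr} together with the moment characterisation of Theorem~\ref{th:cm} each of $I(\xi)$, $I_{1}^{2}(\xi)$, $I_{1}^{3}(\xi)$ is an integral $\int_{0}^{1}(1-r\xi)^{-1}\,\mathrm{d}\nu(r)$ against a non-negative measure.

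Collecting these representations turns $\mathrm{Re}\,\omega^{(3,i)}(\xi)$ into a sum of integrals $\int_{0}^{1}\mathrm{Re}\big(P(\xi)/(1-r\xi)\big)\,\mathrm{d}\nu(r)$. Since $\mathrm{Re}\big((1-\xi)/(1-r\xi)\big)\ge (1-|\xi|)(1-r|\xi|)/|1-r\xi|^{2}>0$ for $|\xi|<1$, $0\le r\le 1$, the first-order pieces are harmless; the difficulty is that the multipliers $(1-\xi)^{2}$ and $(1-\xi)^{3}$ produced by the higher backward differences have sign-indefinite real part, so the naive termwise estimate breaks down. I would therefore repeat the regrouping device of the $(2,1)$ case, where $I_{n}-2I_{n,1}^{2}$ was shown to be a moment sequence: one looks for constants making combinations such as $I_{n}-aI_{n,1}^{2}+bI_{n,1}^{3}$ and $I_{n,1}^{2}-cI_{n,1}^{3}$ completely monotone, so that $\omega^{(3,i)}(\xi)$ may be rewritten as $\sum_{\ell}\int_{0}^{1}Q_{\ell}(\xi)(1-r\xi)^{-1}\,\mathrm{d}\nu_{\ell}(r)$ with non-negative measures $\nu_{\ell}$ and with each $Q_{\ell}$ of the form $(1-\xi)\times(\text{factor})$, the admissible factors being of type $1$, $3-\xi$, and their quadratic analogues for which $\mathrm{Re}\big(Q_{\ell}(\xi)/(1-r\xi)\big)\ge 0$. \emph{Verifying that these regrouped measures are genuinely non-negative for every $0<\alpha<1$ is the main obstacle}; this is precisely where the $k=3$ bookkeeping is heavier than $k=2$. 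Once $\mathrm{Re}\,\omega^{(3,i)}(\xi)>0$ is secured on $|\xi|<1$, I would push the inequality to the punctured boundary $|\xi|=1$, $\xi\ne 1$ by the Lipschitz bound $|\omega^{(k,i)}(\xi)-\omega^{(k,i)}(\xi_{0})|\le M|\xi-\xi_{0}|/|1-\xi|$ of Lemma~\ref{le:3.3} and the limiting argument $\xi_{n}=(1-\tfrac1n)\xi$ already used in Theorem~\ref{th:Api}.

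For part ii) I would show that the cofactor does not vanish at $\xi=1$, which together with part i) forces $(1-\xi)$ to divide $\omega^{(k,i)}$ exactly once. For $|\xi|\le 1$ with $\xi\ne 1$ the identity $\varphi^{(k,i)}=\omega^{(k,i)}/(1-\xi)$ and part i) give $\varphi^{(k,i)}(\xi)\ne 0$ at once. At $\xi=1$ I would use the asymptotics of the moment sequences: from $I_{n}=\big((n+1)^{1-\alpha}-n^{1-\alpha}\big)/\Gamma(2-\alpha)\sim n^{-\alpha}/\Gamma(1-\alpha)$ the dominant series $I(\xi)=\sum_{n}I_{n}\xi^{n}$ diverges as $\xi\to 1^{-}$, indeed $I(\xi)\sim(1-\xi)^{\alpha-1}$, whereas the correction series $I_{1}^{2}(\xi)$ and $I_{1}^{3}(\xi)$ have coefficients of order $n^{-\alpha-1}$ and $n^{-\alpha-2}$, hence converge at $\xi=1$ and are killed by the factors $(1-\xi)$ and $(1-\xi)^{2}$. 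Thus $\varphi^{(k,i)}(\xi)\to+\infty$ as $\xi\to 1^{-}$, the cofactor cannot vanish there, and $\omega^{(k,i)}(\xi)\sim c(1-\xi)^{\alpha}$ carries the factor $(1-\xi)$ to first order only; this is the sense in which $\xi=1$ is the simple, and by part i) the unique, zero in $|\xi|\le 1$. The only extra care is to check that, after the initial-index corrections appearing in the $(3,2)$ and $(3,3)$ formulae, the leading divergent contribution is still governed by the $I$-series, which holds because $I_{n,q}^{1}=I_{n}$ is independent of the shift $q$.
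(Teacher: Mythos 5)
Your roadmap coincides with the paper's own strategy at every structural point: factor $\omega^{(k,i)}(\xi)=(1-\xi)\varphi^{(k,i)}(\xi)$, represent the coefficient sequences as Hausdorff moments via complete monotonicity (Lemma~\ref{le:Ijqr}, Lemma~\ref{le:1.5}, Theorem~\ref{th:cm}), prove $\mathrm{Re}\,\varphi^{(k,i)}(\xi)>0$ on $|\xi|<1$, push to the punctured boundary by the Lipschitz bound of Lemma~\ref{le:3.3} (via Corollary~\ref{co:1}) with $\xi_n=(1-\tfrac1n)\xi$, and obtain simplicity of $\xi=1$ from the divergence of $I(\xi)$ against an absolutely summable remainder $l^{(k,i)}(\xi)$ --- all of which is how the paper argues, and your $k\le 2$ shortcut through Theorem~\ref{th:Api} is legitimate. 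But there is a genuine gap: for $k=3$, which you yourself identify as ``the main obstacle,'' you do not carry out the verification that is the actual content of the paper's proof. The paper produces the required completely monotone regroupings explicitly: $I_n-3I_{n,1}^3$ is completely monotone by Lemma~\ref{le:1.5} with $\varphi(s)=\tfrac32(1-s^2)$, and $I_{n,1}^2-2I_{n,1}^3+\tfrac16 I_n$ with $\varphi(s)=s(1-s)$, yielding for $(3,1)$ and $(3,2)$ decompositions with numerators $1$, $1-\xi$, $4-2\xi+\xi^2$, $\tfrac56+\tfrac16\xi$, $1-\xi^2$, $(3-\xi)(1-\xi)$, each of whose real parts must be (and is) bounded below separately, e.g.\ via $\tfrac32-2|\xi|\cos\theta+|\xi|^2\cos2\theta=\tfrac12(1-2|\xi|\cos\theta)^2+(1-|\xi|^2)\ge0$. ``One looks for constants'' leaves exactly this --- the existence of the constants, the complete monotonicity for every $0<\alpha<1$, and the positivity of the resulting real parts --- unproved, so part i) for $k=3$ is asserted rather than established.

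Moreover, your anticipated normal form --- each term $\int_0^1 Q_\ell(\xi)(1-r\xi)^{-1}\,\mathrm{d}\nu_\ell(r)$ with a \emph{fixed} admissible polynomial factor of type $1$, $3-\xi$, or a quadratic analogue --- does not cover the hardest case $(3,3)$. There the paper must use the moment measure $\vartheta$ of the completely monotone sequence $(-I_{n,2}^3)$ and control
\begin{equation*}
\mathrm{Re}\left(\frac{(3r-r^{2})-4r\xi+(r^{2}+r)\xi^{2}}{1-r\xi}\right),
\end{equation*}
whose numerator depends on the integration variable $r$; its nonnegativity is not a termwise estimate but a genuine two-variable calculus argument (monotonicity in $\theta$ of an auxiliary function $g$, then monotonicity in $|\xi|$, with the boundary identities $g(r,1,0)=0$ and $f(r,1,0)=0$ supplying the sharp constants). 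Nothing in your sketch anticipates or replaces this step. A minor further slip: your claimed decay $I_{n,1}^3=O(n^{-\alpha-2})$ is incorrect --- since $\int_0^1\binom{s+1}{3}\mathrm{d}s=-\tfrac1{24}\ne0$, one integration by parts in \eqref{ljqr} gives only $I_{n,1}^3\asymp n^{-\alpha-1}$ --- but this is harmless for your part ii), where only summability of the correction coefficients is needed, and that part of your argument is sound and matches the paper's (the divergence of $\sum_n I_n$ forces $\varphi^{(k,i)}(1)\ne0$, so $(1-\xi)$ divides $\omega^{(k,i)}$ exactly once).
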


\begin{proof}
It can be easily checked that $\omega^{(k,i)}(1)=0$, yielding that $\xi=1$ is a zero, if we rewrite the series $\omega^{(k,i)}(\xi)$ in the form of
 \begin{equation}\label{eq:fomega}
 \omega^{(k,i)}(\xi)=(1-\xi)\varphi^{(k,i)}(\xi), 
 \end{equation}
 it remains to prove that $\varphi^{(k,i)}(\xi)\ne 0$ for $|\xi|\le 1$, which is suffice to prove that $\mathrm{Re}(\varphi^{(k,i)}(\xi))>0$ for all $|\xi|\le 1$. First of all, assuming $\xi=|\xi|e^{i\theta}$
with $|\xi|<1$. Then in the case of $(k,i)=(1,1)$, the rewritten form \eqref{eq:ln} deduces that
\begin{equation*}
\varphi^{(1,1)}(\xi)=I(\xi)=\int_{0}^{1}\frac{1}{1-r\xi}\mathrm{d\sigma(r)},
\end{equation*}
and furthermore,
\begin{equation*}
\mathrm{Re}\left(\varphi^{(1,1)}(\xi)\right)=\int_{0}^{1}\mathrm{Re}\left(\frac{1}{1-r\xi}\right)\mathrm{d\sigma(r)}:=\int_{0}^{1}f(r,|\xi|,\theta)\mathrm{d\sigma(r)},
\end{equation*}
where $f(r,|\xi|,\theta)=\frac{1-r|\xi|\cos\theta}{1-2r|\xi|\cos\theta+r^{2}|\xi|^{2}}$. Since a calculation deduces
\begin{equation*}
\frac{\partial f}{\partial \theta}(r,|\xi|,\theta)=\frac{-r|\xi|\sin\theta (1-r^{2}|\xi|^{2})}{(1-2r|\xi|\cos\theta+r^{2}|\xi|^{2})^{2}}
\end{equation*}
possesses the same sign as $(-\sin\theta)$ for $0\le r\le 1$ and $0\le |\xi|<1$, it hence follows that $0<f(r,|\xi|,\pi)\le f(r,|\xi|,\theta)\le f(r,|\xi|,0)$ and thus
\begin{equation*}
\mathrm{Re}\left(\varphi^{(1,1)}(\xi)\right)\ge \int_{0}^{1}\frac{1}{1+r|\xi|}\mathrm{d\sigma(r)}\ge \frac{I_{0}}{2}.
\end{equation*}
In the case of $(k,i)=(2,1)$, according to formulae \eqref{eq:fomega21}, \eqref{eq:l12} and \eqref{eq:ln12}, one obtains 
\begin{equation*}
\begin{split}
\varphi^{(2,1)}(\xi)=&I(\xi)-2I_{1}^{2}(\xi)+(3-\xi)I_{1}^{2}(\xi) \\
                            =&\int_{0}^{1}\frac{1}{1-r\xi}\mathrm{d}\upsilon(r)+\int_{0}^{1}\frac{3-\xi}{1-r\xi}\mathrm{d}\gamma(r)      
\end{split}
\end{equation*}
and thus
\begin{equation*}
\begin{split}
\mathrm{Re}\left(\varphi^{(2,1)}(\xi)\right)=&\int_{0}^{1}\mathrm{Re}\left(\frac{1}{1-r\xi}\right)\mathrm{d}\upsilon(r)+\int_{0}^{1}\mathrm{Re}\left(\frac{3-\xi}{1-r\xi}\right)\mathrm{d}\gamma(r) \\
                                                                 \ge&\int_{0}^{1}\frac{1}{1+r|\xi|}\mathrm{d}\upsilon(r)+2\int_{0}^{1}\frac{1}{1+r|\xi|}\mathrm{d}\gamma(r)+\int_{0}^{1}\mathrm{Re}\left(\frac{1-\xi}{1-r\xi}\right)\mathrm{d}\gamma(r) >\frac{I_{0}}{2}.                                                                  
\end{split}
\end{equation*}
In the case of $(k,i)=(2,2)$, it can be obtained from \eqref{eq:fomega22} that
\begin{equation*}
\varphi^{(2,2)}(\xi)=I_{0,1}^{2}(3-\xi)+(1+\xi)\sum_{n=0}^{\infty}I_{n+1,1}^{2}\xi^{n}+I(\xi)-2I_{1}^{2}(\xi),
\end{equation*}
and consequently, the real part of the series can be expressed by
\begin{equation*}
\mathrm{Re}\left(\varphi^{(2,2)}(\xi)\right)=I_{0,1}^{2}\mathrm{Re}(3-\xi)+\int_{0}^{1}r\mathrm{Re}\left(\frac{1+\xi}{1-r\xi}\right)\mathrm{d}\gamma(r)+\int_{0}^{1}\mathrm{Re}\left(\frac{1}{1-r\xi}\right)\mathrm{d}\upsilon(r)\ge\frac{I_{0}}{2}+I_{0,1}^{2}.
\end{equation*}
Since
\begin{equation*}
\mathrm{Re}\left(\frac{1+\xi}{1-r\xi}\right)=\frac{1-r|\xi|\cos\theta+|\xi|\cos\theta-r|\xi|^{2}}{1-2r|\xi|\cos\theta+r^{2}|\xi|^{2}}:=f(r,|\xi|,\theta),
\end{equation*}
the relation $\frac{\partial f}{\partial \theta}(r,|\xi|,\theta)=\frac{-|\xi|\sin\theta(r+1)(1-r^{2}|\xi|^{2})}{(1-2r|\xi|\cos\theta+r^{2}|\xi|^{2})^{2}}$ induces that $f(r,|\xi|,\theta)\ge f(r,|\xi|,\pi)>0$ for $0\le r\le 1$ and $0\le |\xi|<1$.

In the case of $(k,i)=(3,1)$, based on \eqref{eq:ki31}, the rewritten form of series $\omega^{(3,1)}(\xi)$ is
 \begin{equation*}
 \omega^{(3,1)}(\xi)=(1-\xi)I(\xi)+(1-\xi)^{2}I_{1}^{2}(\xi)+(1-\xi)^{3}I_{1}^{3}(\xi),
\end{equation*}
and consequently $\varphi^{(3,1)}(\xi)$ is given by
\begin{equation*}
\begin{split}
\varphi^{(3,1)}(\xi)&=I(\xi)+(1-\xi)I_{1}^{2}(\xi)+(1-\xi)^{2}I_{1}^{3}(\xi) \\
                            &=I(\xi)-3I_{1}^{3}(\xi)+(1-\xi)I_{1}^{2}(\xi)+(4-2\xi+\xi^{2})I_{1}^{3}(\xi). \\
\end{split}
\end{equation*}
According to Lemma \ref{le:1.5}, it yields that
\begin{equation*}
\begin{split}
I_{n}-3I_{n,1}^{3}&=\frac{3}{2}\frac{1}{\Gamma(1-\alpha)}\int_{0}^{1}(n+1-s)^{-\alpha}(1-s^{2})\mathrm{d}s,  \hspace{0.5cm} n\ge 0\\
\end{split}
\end{equation*}
is a completely monotonic sequence, from Theorem \ref{th:cm} there exists a non-decreasing function $\eta$ such that
\begin{equation*}
I_{n}-3I_{n,1}^{3}=\int_{0}^{1}r^{n}\mathrm{d}\eta(r), \hspace{0.5cm} n=0,1,\cdots.
\end{equation*}
In addition, we have from Lemma \ref{le:Ijqr} that sequence $\{I_{n,1}^{3}\}_{n=0}^{\infty}$ is a complete monotonic sequence, therefore it can be represented by
 \begin{equation}\label{eq:ln13}
 I_{n,1}^{3}=\int_{0}^{1}r^{n}\mathrm{d}\beta(r),   \hspace{0.5cm} n=0,1,\cdots,
 \end{equation}
where the function $\beta(r)$ is non-decreasing on $[0,1]$. We thus represent the series into the integral form and take the real part,
 \begin{equation*}
 \begin{split}
 \mathrm{Re}\left(\varphi^{(3,1)}(\xi)\right)=&
 \int_{0}^{1}\mathrm{Re}\left(\frac{1}{1-r\xi}\right)\mathrm{d}\eta(r)+\int_{0}^{1}\mathrm{Re}\left(\frac{1-\xi}{1-r\xi}\right)\mathrm{d}\gamma(r)+\int_{0}^{1}\mathrm{Re}\left(\frac{4-2\xi+\xi^{2}}{1-r\xi}\right)\mathrm{d}\beta(r)    \\
 \ge&\int_{0}^{1}\frac{1}{1+r|\xi|}\mathrm{d}\eta(r)+\frac{5}{2}\int_{0}^{1}\frac{1}{1+r|\xi|}\mathrm{d}\beta(r)
 \ge\frac{1}{2}I_{0}-\frac{1}{4}I_{0,1}^{3},
 \end{split}
 \end{equation*}
 since there holds following estimate 
 \begin{equation*}
 \mathrm{Re}\left(\frac{\frac{3}{2}-2\xi+\xi^{2}}{1-r\xi}\right)=\frac{(1-r|\xi|\cos\theta)(\frac{3}{2}-2|\xi|\cos \theta+|\xi|^{2}\cos2\theta)+2r|\xi|^{2}\sin^{2}\theta(1-|\xi|\cos\theta)}{1-2r|\xi|\cos\theta+r^{2}|\xi|^{2}},                      
 \end{equation*}
 and there is
 \begin{equation*}
\frac{3}{2}-2|\xi|\cos \theta+|\xi|^{2}\cos2\theta=\frac{1}{2}(1-2|\xi|\cos\theta)^{2}+(1-|\xi|^{2})\ge 0
\end{equation*}
for $|\xi|\le 1$ and $\theta\in\mathbb{R}$. 

In the case of $(k,i)=(3,2)$, according to the representation of $\{\omega_{n}^{(2,2)}\}_{n=0}^{\infty}$ in \eqref{eq:ki32}, we derive the expression of $\varphi^{(3,2)}(\xi)$ by 
\begin{equation}\label{eq:phi32}
\begin{split}
\varphi^{(3,2)}(\xi)=&I_{0}+\sum_{j=0}^{\infty}I_{j+1}\xi^{j}+I_{0,1}^{2}(1-\xi)+(1-\xi)\sum_{j=0}^{\infty}I_{j+1,2}^{2}\xi^{j}+(1-\xi)^{2}I_{0,1}^{3}+(1-\xi)^{2}\sum_{j=0}^{\infty}I_{j+1,2}^{3}\xi^{j},
\end{split}
\end{equation}
Substituting the relations $
I_{n,2}^{2}=I_{n,1}^{2}-I_{n}$ and $
I_{n,2}^{3}=I_{n,1}^{3}-I_{n,1}^{2}$ into \eqref{eq:phi32} deduces that  
\begin{equation*}
\begin{split}
\varphi^{(3,2)}(\xi)
=&I(\xi)+(1-\xi)I_{1}^{2}(\xi)+(1-\xi^{2})\sum_{j=0}^{\infty}
I_{j+1,1}^{3}\xi^{j}   \\
&-2(1-\xi)I_{1}^{3}(\xi)+(3-\xi)(1-\xi)I_{0,1}^{3}  \\
=&(\frac{5}{6}+\frac{1}{6}\xi)I(\xi)+(1-\xi)(I_{1}^{2}(\xi)-2I_{1}^{3}(\xi)+\frac{1}{6}I(\xi))
\\
&+(1-\xi^{2})\sum_{j=0}^{\infty}I_{j+1,1}^{3}\xi^{j}+(3-\xi)(1-\xi)I_{0,1}^{3}.
\end{split}
\end{equation*}
Based on Lemma \ref{le:1.5}, one hence obtains that
\begin{equation*}
I_{n,1}^{2}-2I_{n,1}^{3}+\frac{1}{6}I_{n}=\frac{1}{\Gamma(1-\alpha)}\int_{0}^{1}(n+1-s)^{-\alpha}s(1-s)\mathrm{d}s, \hspace{0.5cm}n\ge 0
\end{equation*}
is a completely monotonic sequence. Therefore the sequence can be expressed by
\begin{equation}\label{eq:lnmix32}
I_{n,1}^{2}-2I_{n,1}^{3}+\frac{1}{6}I_{n}=\int_{0}^{1}r^{n}\mathrm{d}\mu(r),\hspace{0.5cm} n=0,1,\cdots
\end{equation}
together with the function $\mu(r)$ being non-decreasing on interval $[0, 1]$. Therefore, from formulae \eqref{eq:ln}, \eqref{eq:lnmix32} and \eqref{eq:ln13}, it follows
\begin{equation*}
\begin{split}
\mathrm{Re}(\varphi^{(3,2)}(\xi))=&\int_{0}^{1}\mathrm{Re}(\frac{\frac{5}{6}+\frac{1}{6}\xi}{1-r\xi})\mathrm{d}\sigma(r)
+\int_{0}^{1}\mathrm{Re}(\frac{1-\xi}{1-r\xi})\mathrm{d}\mu(r) \\
+&\int_{0}^{1}r\mathrm{Re}(\frac{1-\xi^{2}}{1-r\xi})\mathrm{d}\beta(r)+\mathrm{Re}((3-\xi)(1-\xi))I_{0,1}^{3} \\
\ge& \frac{2}{3}\int_{0}^{1}\frac{1}{1+r|\xi|}\mathrm{d}\sigma(r)\ge \frac{I_{0}}{3}.
\end{split}
\end{equation*}
In the case of $(k,i)=(3,3)$, the definition of series $\omega^{(3,3)}(\xi)$ coincides with
\begin{equation}
\begin{split}
\omega^{(3,3)}(\xi)&=(1-\xi)(I_{0}+I_{1})+(1-\xi)\sum_{j=0}^{\infty}I_{j+2}\xi^{j}+(1-\xi)^{2}(I_{0,1}^{2}+I_{1,2}^{2}) \\
                   &+(1-\xi)^{2}\sum_{j=0}^{\infty}I_{j+2,3}^{2}\xi^{j}+(1-\xi)^{3}(I_{0,1}^{3}+I_{1,2}^{3})+(1-\xi)^{3}\sum_{j=0}^{\infty}I_{j+2,3}^{3}\xi^{j},
\end{split}
\end{equation}
therefore it follows that
\begin{equation}
\label{eq:phi33}
\begin{split}
\varphi^{(3,3)}(\xi)=&I_{0}+I_{1}+\sum_{j=0}^{\infty}I_{j+2}\xi^{j}+(1-\xi)\left(I_{0,1}^{2}+I_{1,2}^{2}\right)+(1-\xi)\sum_{j=0}^{\infty}I_{j+2,3}^{2}\xi^{j}  \\
                               &+(1-\xi)^{2}\left(I_{0,1}^{3}+I_{1,2}^{3}\right)+(1-\xi)^{2}\sum_{j=0}^{\infty}I_{j+2,3}^{3}\xi^{j}.      \\
\end{split}
\end{equation}
In addition, substituting the relations 
\begin{equation*}
\begin{split}
&I_{n,3}^{2}=I_{n,2}^{2}-I_{n}=I_{n,1}^{2}-2I_{n},  \\
&I_{n,3}^{3}=I_{n,2}^{3}-I_{n,2}^{2}=I_{n,1}^{3}-2I_{n,1}^{2}+I_{n}, \hspace{0.5cm} n\ge 0
\end{split}
\end{equation*}
into \eqref{eq:phi33} yields that
\begin{equation*}
\begin{split}
\varphi^{(3,3)}(\xi)
=&I_{0}+I_{1}+\sum_{j=0}^{\infty}I_{j+2}\xi^{j}+(1-\xi)\left(I_{0,1}^{2}+I_{1,1}^{2}-I_{1}\right)+(1-\xi)\sum_{j=0}^{\infty}\left(I_{j+2,1}^{2}-2I_{j+2}\right)\xi^{j}  \\
                            &+(1-\xi)^{2}\left(I_{0,1}^{3}+I_{1,
                            1}^{3}-I_{1,1}^{2}\right)+(1-\xi)^{2}\sum_{j=0}^{\infty}\left(I_{j+2,1}^{3}-2I_{j+2,1}^{2}+I_{j+2}\right)\xi^{j}      \\
                            =&I(\xi)+(1-\xi)I_{1}^{2}(\xi)-2(1-\xi)I_{1}^{3}(\xi)+(1-\xi^{2})\sum_{j=0}^{\infty}I_{j+2,2}^{3}\xi^{j}-(3-4\xi+\xi^{2})\sum_{j=0}^{\infty}I_{j+1,2}^{3}\xi^{j}     \\
                            &+(3-\xi)(1-\xi)\left(I_{0,1}^{3}+I_{1,2}^{3}\right)+\left(1-\xi^{2}\right)\sum_{j=0}^{\infty}I_{j+1,1}^{3}\xi^{j}.
\end{split}
\end{equation*}
In view of Lemma \ref{le:Ijqr}, sequence $(-I_{n,2}^{3})_{n=0}^{\infty}$ is completely monotonic, thus there exists a non-decreasing function $\vartheta(r)$ on $[0, 1]$ such that 
\begin{equation*}
-I_{n,2}^{3}=\int_{0}^{1}r^{n}\mathrm{d}\vartheta(r), \hspace{0.5cm} n=0, 1, \cdots,
\end{equation*}
which yields 
\begin{equation*}
(1-\xi^{2})\sum_{j=0}^{\infty}I_{j+2,2}^{3}\xi^{j}-(3-4\xi+\xi^{2})\sum_{j=0}^{\infty}I_{j+1,2}^{3}\xi^{j} 
=\int_{0}^{1}\frac{(3r-r^{2})-4r\xi+(r^{2}+r)\xi^{2}}{1-r\xi}\mathrm{d}\vartheta(r)   
\end{equation*}
for $|\xi|<1$. It follows
\begin{equation*}
\begin{split}
&\mathrm{Re}\left(\frac{(3r-r^{2})-4r\xi+(r^{2}+r)\xi^{2}}{1-r\xi}\right) \\
=&\frac{(3r-r^{2})(1-r|\xi|\cos\theta)+4r^{2}|\xi|^{2}-4r|\xi|\cos\theta+(r^{2}+r)|\xi|^{2}\cos2\theta-(r^{3}+r^{2})|\xi|^{3}\cos\theta}{1-2r|\xi|\cos\theta+r^{2}|\xi|^{2}}  \\
:=&f(r,|\xi|,\theta).
\end{split}
\end{equation*}
A calculation yields that
\begin{equation}\label{eq:pf}
\begin{split}
\frac{\partial f}{\partial \theta}(r,|\xi|,\theta)
&=\frac{r|\xi|\sin\theta}{(1-2r|\xi|\cos\theta+r^{2}|\xi|^{2})^{2}}g(r,|\xi|,\theta),
\end{split}
\end{equation}
where
\begin{equation*}
\begin{split}
g(r,|\xi|,\theta)=&\left(4+3r-r^{2}-4(r+1)|\xi|\cos\theta+(r^{2}+r)|\xi|^{2}\right)(1-2r|\xi|\cos\theta+r^{2}|\xi|^{2}) \\
                         -&2\left((3r-r^{2})(1-r|\xi|\cos\theta)+4r^{2}|\xi|^{2}-4r|\xi|\cos\theta+(r^{2}+r)|\xi|^{2}\cos2\theta-(r^{3}+r^{2})|\xi|^{3}\cos\theta\right),
\end{split}
\end{equation*}
and
\begin{equation*}
\begin{split}
\frac{\partial g}{\partial \theta}(r,|\xi|,\theta)=4|\xi|\sin\theta(r+1)(1-2r|\xi|\cos\theta+r^{2}|\xi|^{2}).
\end{split}
\end{equation*}
We thus know that $g(r,|\xi|,0)\le g(r,|\xi|,\theta)\le g(r,|\xi|,\pi)$. In addition, there is
\begin{equation*}
g(r,|\xi|,0)=(4-3r+r^{2})-4(1+r)|\xi|+(7r+3r^{2}+3r^{3}-r^{4})|\xi|^{2}-4(r^{3}+r^{2})|\xi|^{3}+(r^{3}+r^{4})|\xi|^{4},
\end{equation*}
and 
\begin{equation}\label{eq:pg}
\frac{\partial g}{\partial |\xi|}(r,|\xi|,0)=-4(1+r)+2(7r+3r^{2}+3r^{3}-r^{4})|\xi|-12(r^{3}+r^{2})|\xi|^{2}+4(r^{3}+r^{4})|\xi|^{3}.
\end{equation}
In view of
\begin{equation*}
\frac{\partial^{2} g}{\partial |\xi|^{2}}(r,|\xi|,0)=r\left(12(r+1)(r|\xi|-1)^{2}+2(1-r)^{3}\right)\ge 0
\end{equation*}
for all $0\le r\le 1$ and $0\le|\xi|<1$, we can therefore obtain that $\frac{\partial g}{\partial |\xi|}(r,|\xi|,0)< \frac{\partial g}{\partial |\xi|}(r,1,0)$. In addition, formula \eqref{eq:pg} shows that 
\begin{equation*}
\frac{\partial g}{\partial |\xi|}(r,1,0)=2(r^{3}-3r+2)(r-1)\le 0   
\end{equation*}
for all $0\le r\le 1$, it derives that $\frac{\partial g}{\partial |\xi|}(r,|\xi|,0)<\frac{\partial g}{\partial |\xi|}(r,1,0)\le 0$ for all $0\le r\le 1$ and $0\le|\xi|<1$. We can finally get that 
\begin{equation*}
g(r,|\xi|,0)>g(r,1,0)=0,\hspace{0.5cm}  0\le r\le 1, \hspace{0.2cm} 0\le|\xi|<1.
\end{equation*}
Hence, it holds that $g(r, |\xi|, \theta)\ge g(r,|\xi|,0)>0$ in the cases of $0\le r\le 1$ and $0\le |\xi|<1$. According to formula \eqref{eq:pf}, we have that $f(r, |\xi|, 0)\le f(r, |\xi|, \theta)\le f(r, |\xi|, \pi)$ for all $0\le r\le 1$ and $0\le |\xi|<1$. The definition of $f(r,|\xi|,\theta)$ states
\begin{equation*}
f(r,|\xi|,0)=\frac{3r-r^{2}-4r|\xi|+r^{2}|\xi|^{2}+r|\xi|^{2}}{1-r|\xi|}.
\end{equation*}
Taking the derivative with respect to $|\xi|$ obtains
\begin{equation*}
\frac{\partial f}{\partial |\xi|}(r,|\xi|,0)=\frac{r}{(1-r|\xi|)^{2}}\left(-4+3r-r^{2}+2(r+1)|\xi|-(r^{2}+r)|\xi|^{2}\right)=\frac{r}{(1-r|\xi|)^{2}}h(r,|\xi|).
\end{equation*}
It can be easily checked that $\frac{\partial h}{\partial |\xi|}(r,|\xi|)\ge 2(1-r^{2})\ge 0$ for $0\le r\le 1$, in combination with $h(r,1)=-2(1-r)^{2}\le 0$, we have that $h(r,|\xi|)\le h(r,1)\le 0$ for $0\le r\le 1$. And consequently, result $\frac{\partial f}{\partial |\xi|}(r,|\xi|,0)\le 0$ suggests that $f(r,|\xi|,0)\ge f(r,1,0)$ for 
$0\le r\le 1$ and $0\le |\xi|<1$. In combination with $f(r,1,0)=0$, we can obtain that
\begin{equation*}
f(r,|\xi|,\theta)\ge f(r,|\xi|,0)\ge 0, \hspace{0.5cm}\forall~ 0\le r\le 1, \hspace{0.2cm} |\xi|<1,\hspace{0.2cm} \theta\in\mathbb{R}.
\end{equation*}
Therefore, it follows
\begin{equation*}
\begin{split}
\mathrm{Re}(\varphi^{(3,3)}(\xi))=&\int_{0}^{1}\mathrm{Re}(\frac{\frac{5}{6}+\frac{1}{6}\xi}{1-r\xi})\mathrm{d}\sigma(r)
+\int_{0}^{1}\mathrm{Re}(\frac{1-\xi}{1-r\xi})\mathrm{d}\mu(r)+\int_{0}^{1}r\mathrm{Re}(\frac{1-\xi^{2}}{1-r\xi})\mathrm{d}\beta(r) \\
+&\int_{0}^{1}\mathrm{Re}\left(\frac{(3r-r^{2})-4r\xi+(r^{2}+r)\xi^{2}}{1-r\xi}\right)\mathrm{d}\vartheta(r)+\mathrm{Re}\left((3-\xi)(1-\xi)\right)(I_{0,1}^{3}
+I_{1,2}^{3}) \\
\ge& \frac{2}{3}\int_{0}^{1}\frac{1}{1+r|\xi|}\mathrm{d}\sigma(r)\ge \frac{I_{0}}{3},
\end{split}
\end{equation*}
since there holds that $I_{0,1}^{3}+I_{1,2}^{3}=\frac{2^{1-\alpha}(\alpha^{2}+\alpha)}{3\Gamma(4-\alpha)}\ge0$ for all $0\le \alpha\le 1$.

In addition, for $\xi=1$, assume that $\varphi^{(k,i)}(1)=0$, we know from the definition of $\varphi^{(k,i)}(\xi)$ that
\begin{equation}
\label{eq:id1.39}
\varphi^{(k,i)}(\xi)=I(\xi)+l^{(k,i)}(\xi),
\end{equation}
where the coefficients of series $l^{(k,i)}(\xi)$ is absolutely convergent. The definition of coefficients of $I(\xi)$ yields that $\sum_{i=0}^{n}I_{i}$ is arbitrary large as increasing $n$. However, the boundedness of $l^{(k,i)}(1)$ contradicts identity \eqref{eq:id1.39} for $\xi=1$, which obtains $\varphi^{(k,i)}(1)\ne 0$.

In the rest case of $|\xi|=1$ and $\xi\ne 1$, we can get from Corollary \ref{co:1} that 
series $\varphi^{(k,i)}(\xi)$ is pointwise continuous on $|\xi|\le 1$ except $\xi=1$, then for the sequence $\xi_{n}=(1-\frac{1}{n})\xi$ satisfying $|\xi_{n}|<1$ for all $n\in \mathbb{N}^{+}$, $\varphi^{(k,i)}(\xi)$ is the limit point of sequence $\varphi^{(k,i)}(\xi_{n})$, thus
\begin{equation*}
\mathrm{Re}( \varphi^{(k,i)}(\xi))=\lim_{n\to +\infty}\mathrm{Re}(\varphi^{(k,i)}(\xi_{n}))\ge c^{(k,i)}>0,
\end{equation*}
where constants $c^{(k,i)}$ are independent of $n$.
\end{proof}

\section{Convergence analysis}\label{conversection}
In this section, we consider the global error estimation for the problem \eqref{eq:nolinfode} when the numerical approximations \eqref{eq:nnonliode} are employed. Assume that $u(t_{n})$ is the exact solution of \eqref{eq:nolinfode} at $t=t_{n}$, then it satisfies
\begin{equation}\label{eq:exac}
D_{k,i}^{\alpha}u(t_{n})=f(t_{n}, u(t_{n}))+\tau_{n}^{(k,i)},\hspace{0.618cm} k\le n\le N,
\end{equation}
where the difference operator $D_{k,i}^{\alpha}$ is defined by \eqref{Dki1} and the local truncation error $\tau_{n}^{(k,i)}$ is denoted by \eqref{eq:trunc}. Suppose that $u_{n}^{(k,i)}$ is the solution of \eqref{eq:nnonliode} for each pair of $(k,i)$, we denote the global error by 
\begin{equation}
e_{n}^{(k,i)}=u(t_{n})-u_{n}^{(k,i)} \hspace{0.618cm}\text{for} \hspace{0.2cm}0\le n\le N,
\end{equation}
where $e_{0}^{(k,i)}=0$. Thus subtracting \eqref{eq:nnonliode} by \eqref{eq:exac} implies that
\begin{equation}\label{eq:Dkie}
D_{k,i}^{\alpha}e_{n}^{(k,i)}=\delta f_{n}^{(k,i)}+\tau_{n}^{(k,i)},\hspace{0.618cm} k\le n\le N,
\end{equation}
where the notation $\delta f_{n}^{(k,i)}$ is denoted by $f(t_{n}, u(t_{n}))-f(t_{n}, u_{n}^{(k,i)})$. In addition, substituting \eqref{Dki1} into \eqref{eq:Dkie} yields
\begin{equation}\label{eq:node}
\sum_{m=0}^{k-1}w_{n,m}^{(k,i)}e_{m}^{(k,i)}+\sum_{j=0}^{n}\omega_{n-j}^{(k,i)}e_{j}^{(k,i)}
=(\Delta t)^{\alpha}\delta f_{n}^{(k,i)}+(\Delta t)^{\alpha}\tau_{n}^{(k,i)},\hspace{0.618cm} k\le n\le N.
\end{equation}
Multiplying $\xi^{n-k}$ on both sides of \eqref{eq:node} and summing up for all $n\ge k$, one obtains
\begin{equation*}
\begin{split}
\sum_{n=0}^{\infty}\sum_{m=0}^{k-1}&\left(w_{n+k,m}^{(k,i)}+\omega_{n+k-m}^{(k,i)}\right)e_{m}^{(k,i)}\xi^{n}+\sum_{n=0}^{\infty}\sum_{j=k}^{n+k}\omega_{n+k-j}^{(k,i)}e_{j}^{(k,i)}\xi^{n}\\
&=(\Delta t)^{\alpha}\sum_{n=0}^{\infty}\delta f_{n+k}^{(k,i)}\xi^{n}+(\Delta t)^{\alpha}\sum_{n=0}^{\infty}\tau_{n+k}^{(k,i)}\xi^{n},
\end{split}
\end{equation*}
since 
\begin{equation*}
\sum_{n=0}^{\infty}\sum_{j=k}^{n+k}\omega_{n+k-j}^{(k,i)}e_{j}^{(k,i)}\xi^{n}=
\sum_{n=0}^{\infty}\sum_{j=0}^{n}\omega_{n-j}^{(k,i)}e_{j+k}^{(k,i)}\xi^{n}
=\sum_{j=0}^{\infty}e_{j+k}^{(k,i)}\xi^{j}\sum_{n=0}^{\infty}\omega_{n}^{(k,i)}\xi^{n},
\end{equation*}
it follows
\begin{equation}\label{eq:omegakie}
\omega^{(k,i)}(\xi)e^{(k,i)}(\xi)
=\sum_{m=0}^{k-1}e_{m}^{(k,i)}s_{m}^{(k,i)}(\xi)+(\Delta t)^{\alpha}\delta f^{(k,i)}(\xi)+(\Delta t)^{\alpha}\tau^{(k,i)}(\xi),
\end{equation}
where
\begin{equation}\label{eq:denote1}
\begin{split}
&s_{m}^{(k,i)}(\xi):=\sum_{n=0}^{\infty}s_{n,m}^{(k,i)}\xi^{n}=-\sum_{n=0}^{\infty}\left(w_{n+k,m}^{(k,i)}+\omega_{n+k-m}^{(k,i)}\right)\xi^{n},\hspace{0.618cm} e^{(k,i)}(\xi)=\sum_{n=0}^{\infty}e_{n+k}^{(k,i)}\xi^{n}, \\
&\omega^{(k,i)}(\xi)=\sum_{n=0}^{\infty}\omega_{n}^{(k,i)}\xi^{n},\hspace{0.718cm}
\delta f^{(k,i)}(\xi)=\sum_{n=0}^{\infty}\delta f_{n+k}^{(k,i)}\xi^{n},\hspace{0.618cm}
\tau^{(k,i)}(\xi)=\sum_{n=0}^{\infty}\tau_{n+k}^{(k,i)}\xi^{n}.
\end{split}
\end{equation}
\begin{lemma}
For $1\le i\le k\le3$ and $0\le m\le k-1$, the coefficients $s_{n,m}^{(k,i)}$ are denoted by \eqref{eq:denote1}. Then for all $n \ge 0$, it holds that $s_{n,m}^{(k,i)}$ is bounded, and there exist some bounded constants $c_{m}^{(k,i)}>0$  , which are independent of $n$ and $\alpha$, such that
\begin{equation}\label{eq:ski}
 |s_{n,0}^{(k,i)}|\le \frac{c_{0}^{(k,i)}n^{-\alpha}}{\Gamma(1-\alpha)},\hspace{0.9cm}
 |s_{n,m}^{(k,i)}|\le \frac{c_{m}^{(k,i)}n^{-\alpha-1}}{|\Gamma(-\alpha)|}
\end{equation} 
for $n\ge 1$ and $m\ge 1$.
\end{lemma}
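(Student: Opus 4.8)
The plan is to exploit the fact that each coefficient
$s_{n,m}^{(k,i)}=-\bigl(w_{n+k,m}^{(k,i)}+\omega_{n+k-m}^{(k,i)}\bigr)$ from \eqref{eq:denote1} is a fixed finite integer combination of the integrals $I_{n',q}^{r}$ (and their backward differences), and then to read off the decay rate from two facts drawn from Lemma \ref{le:Ijqr}: the only building blocks that are \emph{not} automatically of order $n^{-\alpha-1}$ are the bare integrals $I_{n,q}^{r}$ with $r\le q$, which are $O(n^{-\alpha})$; every backward difference $\nabla^{j}I_{n,q}^{r}$ with $j\ge1$ and every $I_{n,q}^{r}$ with $r>q$ is $O(n^{-\alpha-1})$. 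The $n^{-\alpha}$ bound for $m=0$ and the $n^{-\alpha-1}$ bound for $m\ge1$ will both follow once I determine, for each weight, its net contribution of $n^{-\alpha}$ size.

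First I would record the uniform size estimates. From \eqref{eq:inqr}, for $r\le q$ one has $|I_{n,q}^{r}|\le \Gamma(1-\alpha)^{-1}n^{-\alpha}\max_{[0,1]}\bigl|\tfrac{d}{ds}\binom{s-q+r-1}{r}\bigr|$, since $(n+1-s)^{-\alpha}\le n^{-\alpha}$ on $[0,1]$. From \eqref{eq:inqr1}, for $r>q$ the integration by parts produces the factor $\tfrac{-\alpha}{\Gamma(1-\alpha)}=\tfrac{1}{\Gamma(-\alpha)}$, giving $|I_{n,q}^{r}|\le |\Gamma(-\alpha)|^{-1}n^{-\alpha-1}\max_{[0,1]}|\binom{s-q+r-1}{r}|$; the representation \eqref{eq:nablainqr} carries the same factor and the kernel bound $(\xi+n-s)^{-\alpha-1}\le(n-1)^{-\alpha-1}\le 4\,n^{-\alpha-1}$ for $n\ge2$, so that $|\nabla^{j}I_{n,q}^{r}|\le C_{j,q,r}|\Gamma(-\alpha)|^{-1}n^{-\alpha-1}$ for $j\ge1$. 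In each case the remaining constant depends only on $j,q,r\le3$ and is uniform in $\alpha\in(0,1)$ because the singular $\Gamma$-factor has been extracted. Since $s_{n,m}^{(k,i)}$ is a fixed finite combination of these quantities, boundedness for all $n\ge0$ is immediate, and only the rate remains.

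The mechanism for the rate is the reduction identity $I_{n,q+1}^{r}=I_{n,q}^{r}-I_{n,q}^{r-1}$, which follows from Pascal's rule $\binom{a}{r}-\binom{a-1}{r}=\binom{a-1}{r-1}$ applied inside \eqref{ljqr}; iterating it rewrites every $I_{n,q}^{r}$ as an integer multiple of $I_{n,1}^{1}=I_{n}$ plus a combination of $I_{n,1}^{r'}$ with $r'\ge2$ (the latter all $O(n^{-\alpha-1})$). Together with $I_{n+a}=I_{n}+O(n^{-\alpha-1})$, this reduces any weight to the normal form $(\text{integer})\cdot I_{n}+O(n^{-\alpha-1})$. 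Because $\omega^{(k,i)}(\xi)$ is assembled entirely from $\nabla^{j}I$-blocks with $j\ge1$, its $I_{n}$-coefficient vanishes and $|\omega_{n}^{(k,i)}|=O(n^{-\alpha-1})$; hence the rate of $s_{n,m}^{(k,i)}$ is governed by the $I_{n}$-coefficient of $w_{n+k,m}^{(k,i)}$. For $m=0$ this coefficient may be nonzero, producing the $n^{-\alpha}/\Gamma(1-\alpha)$ bound; for $m\ge1$ I must show it is exactly zero, which then pins the rate at $n^{-\alpha-1}/|\Gamma(-\alpha)|$.

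The main obstacle is precisely this cancellation, and I expect to settle it by direct computation for each of the six pairs $(k,i)$, $1\le i\le k\le3$, using \eqref{eq:ki31}--\eqref{eq:ki33} and the reduction identity. It is not visible from the raw formulas: for instance $w_{m,1}^{(3,3)}$ contains the $O(m^{-\alpha})$ blocks $-I_{m+1,3}^{2}+2I_{m,3}^{2}$ and $-I_{m+1,3}^{3}+3I_{m,3}^{3}$, and only after substituting $I_{n,3}^{2}=I_{n,1}^{2}-2I_{n}$ and $I_{n,3}^{3}=I_{n,1}^{3}-2I_{n,1}^{2}+I_{n}$ do the $I_{n}$-terms cancel against $-\nabla I_{m+1}$, leaving only $I_{\cdot,1}^{2}$ and $I_{\cdot,1}^{3}$ contributions of order $m^{-\alpha-1}$. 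I would therefore, for each case, reduce $w_{n,m}^{(k,i)}$ to normal form and verify that its $I_{n}$-coefficient is nonzero only for $m=0$; a uniform structural reason (the boundary weights for interior starting values decaying one order faster) would be worth seeking, but I would not rely on it and would complete the bookkeeping checks. The constants $c_{m}^{(k,i)}$ then arise as the finite, $\alpha$-independent sums of the block constants from the size estimates.
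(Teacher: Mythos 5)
Your proposal is correct and follows essentially the same route as the paper: the paper likewise extracts the $\Gamma$-factors to get $I_{n}=O\bigl(n^{-\alpha}/\Gamma(1-\alpha)\bigr)$ from \eqref{eq:inqr} and $I_{n,1}^{r}=O\bigl(n^{-\alpha-1}/\Gamma(-\alpha)\bigr)$, $\nabla^{p}I_{n,q}^{r}=O\bigl(n^{-\alpha-1}/\Gamma(-\alpha)\bigr)$ from \eqref{eq:inqr1}, \eqref{eq:nablainqr} and \eqref{eq:nablakinqr1}, and then bounds $s_{n,m}^{(k,i)}$ as a finite linear combination of these blocks. The one difference is that the paper simply asserts (``observe that'') that for $m\ge 1$ the combination contains only $\nabla I_{l}$, $I_{l,1}^{r}$ with $r\ge 2$, and higher differences — i.e.\ no bare $I_{l}$ term — whereas you correctly identify this cancellation as the crux and supply the missing mechanism, the Pascal-type identity $I_{n,q+1}^{r}=I_{n,q}^{r}-I_{n,q}^{r-1}$ (consistent with the relations $I_{n,3}^{2}=I_{n,1}^{2}-2I_{n}$, $I_{n,3}^{3}=I_{n,1}^{3}-2I_{n,1}^{2}+I_{n}$ the paper uses elsewhere) together with the case-by-case normal-form check, so your write-up makes rigorous precisely the step the paper leaves implicit.
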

\begin{proof}
It is known from \eqref{ljqr} that for any finite $q, r\in\mathbb{N}^{+}$, $I_{n,q}^{r}$ is bounded for all $n\in\mathbb{Z}$. Since the coefficients $s_{n,m}^{(k,i)}$ are denoted as the linear combinations of $I_{n,q}^{r}$, we can immediately obtain the boundedness of $s_{n,m}^{(k,i)}$ for all integer $n\ge 0$.  
Moreover, in the cases of $1\le i\le k\le 3$, each $s_{n,0}^{(k,i)}$ can be expressed as a linear combination of $I_{l}$ and $I_{l,1}^{r}$ with $l\ge n$ and $1\le r\le 3$. Based on formulae \eqref{eq:inqr} and \eqref{eq:inqr1}, it yields $I_{n}=O\left(\frac{n^{-\alpha}}{\Gamma(1-\alpha)}\right)$ and $I_{n,1}^{r}=O\left(\frac{n^{-\alpha-1}}{\Gamma(-\alpha)}\right)=o\left(\frac{n^{-\alpha}}{\Gamma(1-\alpha)}\right)$ for $r\ge 2$ and $n\ge 1$, respectively. Therefore, it implies that there is a uniform bound with respect to $n$ and $\alpha$, denoted by $c_{0}^{(k,i)}>0$, such that $|s_{n,0}^{(k,i)}|\le \frac{c_{0}^{(k,i)}n^{-\alpha}}{\Gamma(1-\alpha)}$ as $n\ge 1$. 
In terms of $m\ge 1$, observe that $s_{n,m}^{(k,i)}$ are the linear combinations of $\nabla I_{l}$, $I_{l,1}^{r}$ and $\nabla^{p}I_{l,1}^{r}$, for $l\ge n+1$, $r\ge 2$ and $1\le p\le 3$. According to formulae \eqref{eq:nablainqr} and \eqref{eq:nablakinqr1}, we know that
$\nabla I_{n}=O\left(\frac{(n-1)^{-\alpha-1}}{\Gamma(-\alpha)}\right)=O\left(\frac{n^{-\alpha-1}}{\Gamma(-\alpha)}\right)$ and 
$\nabla^{p}I_{n,1}^{r}=O\left(\frac{(n-p)^{-\alpha-p-1}}{\Gamma(-\alpha-p+1)}\right)=o\left(\frac{n^{-\alpha-1}}{\Gamma(-\alpha)}\right)$, therefore it holds $s_{n,m}^{(k,i)}=O\left(\frac{n^{-\alpha-1}}{\Gamma(-\alpha)}\right)$, and hence there exist constants $c_{m}^{(k,i)}>0$ such that the last inequality of \eqref{eq:ski} is satisfied.
\end{proof}

According to the definition of the series $\omega^{(k,i)}(\xi)$, it is important to notice the decompositions of the form   
\begin{equation}\label{eq:rela}
\omega^{(k,i)}(\xi)=(1-\xi)\varphi^{(k,i)}(\xi)=(1-\xi)^{\alpha}\psi^{(k,i)}(\xi),\hspace{0.5cm}\text{for}~~0<\alpha<1,
\end{equation}
where series $\varphi^{(k,i)}(\xi)$ is defined by \eqref{eq:fomega} and denote that
\begin{equation}\label{eq:psiki}
\psi^{(k,i)}(\xi)=(1-\xi)^{1-\alpha}\varphi^{(k,i)}(\xi).
\end{equation}
Formula \eqref{eq:rela} indicates a relationship between the proposed method and the fractional Euler method mentioned in \cite{LubichC:1986a}. 
In the following part, we would like to discuss some relevant properties of the series $\psi^{(k,i)}(\xi)$ as preliminaries.
\begin{lemma}\label{le:gn}
Assume that sequences $\{g_{n}^{(\beta)}\}_{n=0}^{\infty}$ are generated by  the power series $(1-\xi)^{\beta}$ for $\beta\in\mathbb{R}$, i.e.,
\begin{equation}\label{eq:gn}
(1-\xi)^{\beta}=\sum_{n=0}^{\infty}(-1)^{n}\binom{\beta}{n}\xi^{n}=\sum_{n=0}^{\infty}g_{n}^{(\beta)}\xi^{n}.
\end{equation}
Therefore, in the cases of $\beta\in(-1,1)$, based on \eqref{eq:gn}, there holds
\begin{equation}\label{eq:gn1}
\left\{
\begin{split}
\beta\in (-1,0): &\hspace{0.2cm}g_{0}^{(\beta)}=1,\hspace{0.4cm}g_{0}^{(\beta)}>g_{1}^{(\beta)}>\cdots>0,\\
             &\hspace{0.2cm}\sum_{i=0}^{n}g_{i}^{(\beta)}=g_{n}^{(\beta-1)}, \hspace{0.2cm} n\ge 0;      \\
\beta\in (0,1):\hspace{0.3cm} &\hspace{0.2cm}g_{0}^{(\beta)}=1,\hspace{0.4cm}g_{n}^{(\beta)}<0,\hspace{0.2cm} n\ge 1,        \\
                 &\hspace{0.2cm}1>|g_{1}^{(\beta)}|>|g_{2}^{(\beta)}|>\cdots>0, \\
                 &\hspace{0.2cm}\sum_{i=0}^{\infty}g_{i}^{(\beta)}=0,\hspace{0.4cm}\sum_{i=0}^{n}g_{i}^{(\beta)}=g_{n}^{(\beta-1)}, \hspace{0.2cm} n\ge 0. \\
\end{split}
\right.
\end{equation}
\end{lemma}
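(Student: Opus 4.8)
The plan is to work directly from the closed form $g_{n}^{(\beta)}=(-1)^{n}\binom{\beta}{n}$, obtained by matching \eqref{eq:gn} with the binomial expansion, and to control both sign and monotonicity through the consecutive ratio. A short computation gives
\[
\frac{g_{n+1}^{(\beta)}}{g_{n}^{(\beta)}}=-\frac{\binom{\beta}{n+1}}{\binom{\beta}{n}}=\frac{n-\beta}{n+1},
\]
which is the workhorse of the entire argument. First I would settle the signs by writing $\binom{\beta}{n}=\frac{1}{n!}\prod_{j=0}^{n-1}(\beta-j)$ and counting the negative factors. For $\beta\in(-1,0)$ every factor $\beta-j$ with $0\le j\le n-1$ is negative, so the product carries sign $(-1)^{n}$ and hence $g_{n}^{(\beta)}>0$ for all $n$. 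For $\beta\in(0,1)$ and $n\ge 1$ exactly one factor (the $j=0$ term) is positive and the remaining $n-1$ are negative, so the product has sign $(-1)^{n-1}$ and therefore $g_{n}^{(\beta)}<0$. In both cases $g_{0}^{(\beta)}=1$ trivially.

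Next I would obtain the monotonicity from the ratio. Since $|\beta|<1$, the quantity $\frac{n-\beta}{n+1}$ lies in $(0,1)$ for every admissible $n$: already at $n=0$ when $\beta\in(-1,0)$, and from $n=1$ onwards when $\beta\in(0,1)$, where moreover $g_{1}^{(\beta)}=-\beta$ gives $|g_{1}^{(\beta)}|=\beta<1$. A ratio in $(0,1)$ forces consecutive terms to keep the same sign and to decrease strictly in absolute value, which yields $g_{0}^{(\beta)}>g_{1}^{(\beta)}>\cdots>0$ in the first case and $1>|g_{1}^{(\beta)}|>|g_{2}^{(\beta)}|>\cdots>0$ in the second.

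The partial-sum identity $\sum_{i=0}^{n}g_{i}^{(\beta)}=g_{n}^{(\beta-1)}$ I would read off from the generating-function relation $(1-\xi)^{\beta-1}=(1-\xi)^{-1}(1-\xi)^{\beta}$: multiplication by $(1-\xi)^{-1}=\sum_{n\ge 0}\xi^{n}$ converts a coefficient sequence into its sequence of partial sums, so comparing the coefficients of $\xi^{n}$ on both sides gives the claim for every $\beta$. Finally, for $\beta\in(0,1)$ the value $\sum_{i=0}^{\infty}g_{i}^{(\beta)}=0$ follows by letting $n\to\infty$ in this identity: since $\beta-1\in(-1,0)$, the first case already shows that $\{g_{n}^{(\beta-1)}\}$ is positive and decreasing, and the standard asymptotic $(-1)^{n}\binom{\gamma}{n}\sim n^{-\gamma-1}/\Gamma(-\gamma)$ with $\gamma=\beta-1$ forces $g_{n}^{(\beta-1)}\to 0$. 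The only step demanding genuine care is this last limit --- establishing that the partial sums actually vanish rather than merely remain bounded --- which rests on the Gamma-function asymptotics of the binomial coefficients, equivalently on Abel's theorem applied to $(1-\xi)^{\beta}$ at $\xi=1$; everything else is a direct consequence of the ratio computation.
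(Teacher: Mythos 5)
Your proof is correct, and it is worth noting that the paper itself offers \emph{no} proof of Lemma \ref{le:gn} at all --- the statement is recorded as a known fact about the binomial coefficients of $(1-\xi)^{\beta}$, so your blind argument is not competing with an in-paper proof but supplying the missing one. Your route is sound on every point: the closed form $g_{n}^{(\beta)}=(-1)^{n}\binom{\beta}{n}$ with the ratio $g_{n+1}^{(\beta)}/g_{n}^{(\beta)}=\frac{n-\beta}{n+1}$ settles both the signs (by counting negative factors in $\prod_{j=0}^{n-1}(\beta-j)$) and the strict decrease of $|g_{n}^{(\beta)}|$, since the ratio lies in $(0,1)$ for all $n\ge 0$ when $\beta\in(-1,0)$ and for all $n\ge 1$ when $\beta\in(0,1)$; the partial-sum identity is exactly the Cauchy product against $(1-\xi)^{-1}$, valid by absolute convergence on $|\xi|<1$ (or as a formal power series identity); and you correctly isolate the one step requiring care, namely $g_{n}^{(\beta-1)}\to 0$, which you justify via the asymptotic $(-1)^{n}\binom{\gamma}{n}\sim n^{-\gamma-1}/\Gamma(-\gamma)$ --- the same relation the paper itself records as \eqref{eq:gammaasym} and uses in the proof of Lemma \ref{le:le1}, so your argument is fully consistent with the paper's toolkit. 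Two small remarks: for the Abel-theorem variant you should note explicitly that convergence of $\sum_{i}g_{i}^{(\beta)}$ is already guaranteed because its partial sums $g_{n}^{(\beta-1)}$ are positive and decreasing (Abel needs convergence as a hypothesis); and the limit can in fact be had even more cheaply, without Gamma asymptotics, from the telescoped product $g_{n}^{(\beta-1)}=\prod_{m=1}^{n}\frac{m-\beta}{m}$, whose logarithm $\sum_{m=1}^{n}\log\bigl(1-\frac{\beta}{m}\bigr)$ diverges to $-\infty$ with the harmonic series since $\beta>0$.
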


\begin{lemma}\label{le:le1}
In the cases of $1\le i\le k\le 6$, the coefficients of the power series $\psi^{(k,i)}(\xi)$ belong to $l_{1}$ space. 
\end{lemma}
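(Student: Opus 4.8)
The plan is to prove the stronger statement that $\psi^{(k,i)}$ lies in the Banach algebra $A$ of power series with absolutely summable coefficients, i.e. $\sum_{n}|\psi_{n}^{(k,i)}|<\infty$, exploiting that $A$ is closed under multiplication (the convolution of two $l_{1}$ sequences is again in $l_{1}$; this is the algebra underlying Theorem~\ref{th:pw}). Two elementary facts will be used repeatedly. First, $(1-\xi)^{\beta}\in A$ for every real $\beta>0$, since by \eqref{eq:gn} its coefficients satisfy $|g_{n}^{(\beta)}|=O(n^{-\beta-1})$ with $\beta+1>1$; in particular $(1-\xi)^{1-\alpha}\in A$. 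Second, by \eqref{eq:rela} we may write $\psi^{(k,i)}(\xi)=(1-\xi)^{1-\alpha}\varphi^{(k,i)}(\xi)$, so it suffices to control $\varphi^{(k,i)}$ after multiplication by $(1-\xi)^{1-\alpha}$.

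Next I would isolate the only non-summable ingredient of $\varphi^{(k,i)}$. Using the explicit representations derived in the proof of Theorem~\ref{coro:1} for $1\le i\le k\le 3$, and, for the remaining cases up to $k=6$, the recurrence $I_{n,q}^{r}=I_{n,q-1}^{r}-I_{n,q-1}^{r-1}$ that reduces every $I_{q}^{r}(\xi)=\sum_{n}I_{n,q}^{r}\xi^{n}$ to an integer combination of $I(\xi)$ and of the series $I_{1}^{s}(\xi)$ with $s\ge 2$, one sees that, apart from a polynomial multiple of $I(\xi)$, each $\varphi^{(k,i)}$ is a combination of the series $I_{1}^{s}(\xi)$ ($s\ge 2$) and of genuine polynomials. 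The coefficients of $I_{1}^{s}(\xi)$ satisfy $I_{n,1}^{s}=O(n^{-\alpha-1})$ by \eqref{eq:inqr1}, so these series lie in $A$, while the finitely many boundary corrections in \eqref{eq:Pik} only alter finitely many coefficients or shift the index by a power of $\xi$, which preserves $l_{1}$. Hence $\varphi^{(k,i)}(\xi)=P^{(k,i)}(\xi)\,I(\xi)+R^{(k,i)}(\xi)$ with $P^{(k,i)}$ a polynomial in $\xi$ and $\xi^{-1}$ and $R^{(k,i)}\in A$, giving $\psi^{(k,i)}=(1-\xi)^{1-\alpha}P^{(k,i)}(\xi)I(\xi)+(1-\xi)^{1-\alpha}R^{(k,i)}$, whose last summand is already in $A$.

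The heart of the proof is then to show $(1-\xi)^{1-\alpha}I(\xi)\in A$. Here I would use the closed form $I_{n}=\big((n+1)^{1-\alpha}-n^{1-\alpha}\big)/\Gamma(2-\alpha)$, which follows from \eqref{ljqr} with $q=r=1$, and compare it with the coefficients $g_{n}^{(\alpha-1)}=\Gamma(n+1-\alpha)/\big(n!\,\Gamma(1-\alpha)\big)$ of $(1-\xi)^{\alpha-1}$: both are asymptotic to $n^{-\alpha}/\Gamma(1-\alpha)$, so their difference is $O(n^{-\alpha-1})$ and therefore $I(\xi)-(1-\xi)^{\alpha-1}\in A$. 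Since $(1-\xi)^{1-\alpha}(1-\xi)^{\alpha-1}=1$, this yields the identity $(1-\xi)^{1-\alpha}I(\xi)=1+(1-\xi)^{1-\alpha}\big(I(\xi)-(1-\xi)^{\alpha-1}\big)\in A$. To absorb the factor $P^{(k,i)}$ I would expand it at $\xi=1$, writing $P^{(k,i)}(\xi)=P^{(k,i)}(1)+(1-\xi)Q(\xi)$ with $Q$ a polynomial in $\xi,\xi^{-1}$, whence $(1-\xi)^{1-\alpha}P^{(k,i)}I=P^{(k,i)}(1)\,(1-\xi)^{1-\alpha}I+(1-\xi)^{2-\alpha}Q\,I$; substituting $I=(1-\xi)^{\alpha-1}+\big(I-(1-\xi)^{\alpha-1}\big)$ in the second term turns it into $(1-\xi)Q+(1-\xi)^{2-\alpha}Q\big(I-(1-\xi)^{\alpha-1}\big)$, a polynomial plus a product of elements of $A$. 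Collecting everything yields $\psi^{(k,i)}\in A$.

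The main obstacle is precisely the cancellation $(1-\xi)^{1-\alpha}I(\xi)\in A$: taken separately, $I(\xi)$ behaves like $(1-\xi)^{\alpha-1}$ as $\xi\to 1$ and has non-summable coefficients $O(n^{-\alpha})$, while the companion factor $(1-\xi)^{-\alpha}$ in $\psi^{(k,i)}=(1-\xi)^{-\alpha}\omega^{(k,i)}$ is itself not in $A$, so no submultiplicative bound is available and the result cannot follow from Lemma~\ref{le:3.2} alone. Everything hinges on the quantitative matching of leading asymptotics $I_{n}\sim g_{n}^{(\alpha-1)}\sim n^{-\alpha}/\Gamma(1-\alpha)$ to order $O(n^{-\alpha-1})$, which makes the subtraction $I(\xi)-(1-\xi)^{\alpha-1}$ land in $A$. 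The remaining effort is the essentially mechanical verification, for each of the finitely many admissible pairs $1\le i\le k\le 6$, that after discarding the $A$-summands the surviving singular part is only a polynomial multiple of $I(\xi)$.
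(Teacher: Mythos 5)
Your proof is correct and takes essentially the same route as the paper: both rest on the decomposition $\varphi^{(k,i)}(\xi)=I(\xi)+\ell^{(k,i)}(\xi)$ with absolutely summable remainder, and on the identical key cancellation $I_{n}=g_{n}^{(\alpha-1)}+v_{n}$, $\sum_{n}|v_{n}|<\infty$, so that $(1-\xi)^{1-\alpha}I(\xi)=1+(1-\xi)^{1-\alpha}\bigl(I(\xi)-(1-\xi)^{\alpha-1}\bigr)$ lies in the Wiener algebra via $(1-\xi)^{1-\alpha}(1-\xi)^{\alpha-1}=1$. The differences are refinements rather than a new argument: you obtain $\sum_{n}|v_{n}|<\infty$ from the closed form $I_{n}=\bigl((n+1)^{1-\alpha}-n^{1-\alpha}\bigr)/\Gamma(2-\alpha)$ instead of the paper's direct integral estimate, and your expansion $P^{(k,i)}(\xi)=P^{(k,i)}(1)+(1-\xi)Q(\xi)$ together with the Pascal recurrence $I_{n,q}^{r}=I_{n,q-1}^{r}-I_{n,q-1}^{r-1}$ makes explicit, for the cases $3<k\le 6$, what the paper leaves implicit by citing only the representations of Theorem \ref{coro:1}.
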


\begin{proof}
According to the expression of $\varphi^{(k,i)}(\xi)$ presented in Theorem \ref{coro:1}, there holds  
\begin{equation}\label{rela:1}
\varphi^{(k,i)}(\xi)=I(\xi)+l^{(k,i)}(\xi), \hspace{0.9cm}\text{with}\hspace{0.3cm} \sum_{n=0}^{\infty}|l_{n}^{(k,i)}|<\infty.
\end{equation}
In addition, together with \eqref{eq:psiki}, it follows 
\begin{equation*}
\psi^{(k,i)}(\xi)=(1-\xi)^{(1-\alpha)}I(\xi)+(1-\xi)^{(1-\alpha)}l^{(k,i)}(\xi),
\end{equation*}
therefore, it suffices to prove that the coefficients of series $(1-\xi)^{1-\alpha}I(\xi)$ belong to $l_{1}$ space.

From the gamma function's definition of the form
\begin{equation*}
\Gamma(\beta)=\lim_{n\to\infty}\frac{n^{\beta}}{(-1)^{n}\binom{-\beta}{n}(n+\beta)},\hspace{0.618cm} \beta\ne 0, -1, -2, \cdots,
\end{equation*}
one obtains the asymptotically equal relation
\begin{equation}\label{eq:gammaasym}
\frac{n^{\beta-1}}{\Gamma(\beta)}\cong(-1)^{n}\binom{-
\beta}{n},\hspace{0.618cm}\text{as}\hspace{0.209cm}n\to\infty,
\end{equation}
where the notation $\cong$ means the ratio $\left(n^{\beta-1}/\Gamma(\beta)\right)\big/(-1)^{n}\binom{-
\beta}{n}\to 1$ as $n\to\infty$.
Furthermore, it is known from \cite{Erdelyi:1953, LubichC:1986b} that
 \begin{equation}\label{eq:asymptgamma}
 (-1)^{n}\binom{-\beta}{n}=\frac{n^{\beta-1}}{\Gamma(\beta)}\left(1+O\left(\frac{\beta-1}{n}\right)\right).
 \end{equation}
On the other hand, based on the definition of $I_{n}$, it yields that $
I_{n}\cong \frac{n^{-\alpha}}{\Gamma(1-\alpha)}$ as $n\to\infty$, furthermore,
\begin{equation}
\begin{split}
\sum_{n=1}^{\infty}\left|I_{n}-\frac{n^{-\alpha}}{\Gamma(1-\alpha)}\right|&=\frac{1}{\Gamma(1-\alpha)}\sum_{n=1}^{\infty}\int_{0}^{1}\left(n^{-\alpha}-(n+1-s)^{-\alpha}\right)\mathrm{d}s \\
&=\frac{\alpha}{\Gamma(1-\alpha)}\int_{0}^{1}\int_{0}^{1-s}\sum_{n=1}^{\infty}(n+t)^{-\alpha-1}\mathrm{d}t\mathrm{d}s \\
&\le\frac{\alpha}{\Gamma(1-\alpha)}\sum_{n=1}^{\infty}n^{-\alpha-1} \\
&\le \frac{\alpha}{\Gamma(1-\alpha)}\left(1+\int_{1}^{\infty}x^{-\alpha-1}\mathrm{d}x\right)\\
&=\frac{\alpha+1}{\Gamma(1-\alpha)}<+\infty.
\end{split}
\end{equation} 
Therefore, in combination with \eqref{eq:asymptgamma}, it holds that
\begin{equation}\label{rela:2}
I_{n}=g_{n}^{(\alpha-1)}+v_{n},\hspace{0.409cm}\text{with}\hspace{0.209cm}\sum_{n=0}^{\infty}|v_{n}|<\infty,
\end{equation}
hence one has
\begin{equation*}
(1-\xi)^{1-\alpha}I(\xi)=\sum_{n=0}^{\infty}\left(\sum_{k=0}^{n}g_{n-k}^{(1-\alpha)}I_{k}\right)\xi^{n} 
\end{equation*}
with the relationship that
\begin{equation*}
\begin{split}
 \sum_{n=0}^{\infty}|\sum_{k=0}^{n}g_{n-k}^{(1-\alpha)}I_{k}|
 &= \sum_{n=0}^{\infty}|\sum_{k=0}^{n}g_{n-k}^{(1-\alpha)}\left(g_{k}^{(\alpha-1)}+v_{k}\right)|\\
 &\le \sum_{n=0}^{\infty}|\sum_{k=0}^{n}g_{n-k}^{(1-\alpha)}g_{k}^{(\alpha-1)}|+\sum_{n=0}^{\infty}|\sum_{k=0}^{n}g_{n-k}^{(1-\alpha)}v_{k}| \\
 &\le 1+\sum_{n=0}^{\infty}|g_{n}^{(1-\alpha)}|\sum_{k=0}^{\infty}|v_{k}|<\infty,
 \end{split}
 \end{equation*}
 which yields the desired result.
\end{proof}

\begin{lemma}\label{le:le2}
In the cases of $1\le i\le k\le 3$, it holds that $\psi^{(k,i)}(\xi)\ne 0$ for any $|\xi|\le 1$. 
\end{lemma}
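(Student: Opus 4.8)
The plan is to use the absolute convergence from Lemma \ref{le:le1} to reduce the claim to a pointwise nonvanishing check on the closed unit disc, and then to treat the points $\xi\ne 1$ and the single point $\xi=1$ by entirely different arguments. By Lemma \ref{le:le1} the coefficient sequence of $\psi^{(k,i)}(\xi)$ lies in $l_1$, so the power series converges absolutely and uniformly for $|\xi|\le 1$ and defines a continuous function there; it therefore suffices to verify $\psi^{(k,i)}(\xi)\ne 0$ at every fixed $\xi$ with $|\xi|\le 1$.

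For $|\xi|\le 1$ with $\xi\ne 1$ I would argue directly from the factorization \eqref{eq:psiki}, namely $\psi^{(k,i)}(\xi)=(1-\xi)^{1-\alpha}\varphi^{(k,i)}(\xi)$. Since $1-\xi\ne 0$ and $0<1-\alpha<1$, the principal branch $(1-\xi)^{1-\alpha}$ is nonzero. By Theorem \ref{coro:1}, and more precisely the estimate $\mathrm{Re}(\varphi^{(k,i)}(\xi))>0$ established in its proof for $1\le i\le k\le 3$, the factor $\varphi^{(k,i)}(\xi)$ is a finite nonzero number for every $|\xi|\le 1$ with $\xi\ne 1$. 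Hence the product $\psi^{(k,i)}(\xi)$ is nonzero away from $\xi=1$.

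The delicate point, and the main obstacle, is $\xi=1$: there $\varphi^{(k,i)}$ is itself unbounded, since the series $I(\xi)=\sum_{n}I_{n}\xi^{n}$ diverges at $\xi=1$ because $I_{n}\cong n^{-\alpha}/\Gamma(1-\alpha)$, so the factorization gives the indeterminate form $0\cdot\infty$ and cannot be used naively. Here I would extract the exact singular behaviour using \eqref{rela:1} and \eqref{rela:2}: writing $\varphi^{(k,i)}(\xi)=I(\xi)+l^{(k,i)}(\xi)$ with $l^{(k,i)}\in l_1$, and $I_{n}=g_{n}^{(\alpha-1)}+v_{n}$ with $\sum_{n}|v_{n}|<\infty$, and recalling from \eqref{eq:gn} that $\sum_{n}g_{n}^{(\alpha-1)}\xi^{n}=(1-\xi)^{\alpha-1}$, one obtains $I(\xi)=(1-\xi)^{\alpha-1}+V(\xi)$ with $V(\xi)=\sum_{n}v_{n}\xi^{n}$ bounded on the closed disc. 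Substituting into \eqref{eq:psiki} then yields
\begin{equation*}
\psi^{(k,i)}(\xi)=(1-\xi)^{1-\alpha}\big[(1-\xi)^{\alpha-1}+V(\xi)+l^{(k,i)}(\xi)\big]=1+(1-\xi)^{1-\alpha}\big[V(\xi)+l^{(k,i)}(\xi)\big].
\end{equation*}
Since $1-\alpha>0$, the factor $(1-\xi)^{1-\alpha}$ tends to $0$ while $V(\xi)+l^{(k,i)}(\xi)$ remains bounded as $\xi\to 1$ inside the disc, so by the continuity established in the first step we get $\psi^{(k,i)}(1)=1\ne 0$. This simultaneously confirms that the singular part of $\varphi^{(k,i)}$ is exactly $(1-\xi)^{\alpha-1}$, independent of $(k,i)$, which is the crux of the argument; the remaining cases in the range $1\le i\le k\le 3$ require only the already-proven positivity of $\mathrm{Re}(\varphi^{(k,i)})$ and carry no additional difficulty.
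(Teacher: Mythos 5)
Your proof is correct and follows essentially the same route as the paper: factor $\psi^{(k,i)}(\xi)=(1-\xi)^{1-\alpha}\varphi^{(k,i)}(\xi)$, dispose of $\xi\ne 1$ via the nonvanishing of $\varphi^{(k,i)}$ from Theorem \ref{coro:1}, and use the decomposition \eqref{rela:1}--\eqref{rela:2} to show $\psi^{(k,i)}(1)=1$. The only cosmetic difference is that you evaluate $\psi^{(k,i)}(1)$ as a radial limit using the continuity supplied by Lemma \ref{le:le1}, whereas the paper sums the Cauchy-product coefficients directly using $\sum_{n}g_{n}^{(1-\alpha)}=0$ from Lemma \ref{le:gn} --- two phrasings of the same Abel-type argument.
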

\begin{proof}
In the provement process of the Theorem \ref{coro:1}, it implies that  $\varphi^{(k,i)}(\xi)\ne 0$ for all $|\xi|\le 1$ and $1\le i\le k\le 3$. The corresponding complex numbers $(1-\xi)^{1-\alpha}$ for any prescribed $|\xi|\le 1$ are located within the sector $S_{\alpha}=\{z: \hspace{0.2cm}|\mathrm{arg}(z)|\le \frac{(1-\alpha)\pi}{2}\}$. In addition, notice that $(1-\xi)^{1-\alpha}=0$ if and only if $\xi=1$. Thus, it remains to confirm the 
value of the series $(1-\xi)^{1-\alpha}\varphi^{(k,i)}(\xi)$ at $\xi=1$. In fact, according 
to the formulae \eqref{rela:1} and \eqref{rela:2}, it follows 
 \begin{equation*}
\begin{split}
 \sum_{n=0}^{\infty}\sum_{l=0}^{n}g_{n-l}^{(1-\alpha)}\varphi_{l}^{(k,i)}
 &= \sum_{n=0}^{\infty}\sum_{l=0}^{n}g_{n-l}^{(1-\alpha)}\left(g_{l}^{(\alpha-1)}+v_{l}+l_{l}^{(k,i)}\right)\\
 &= \sum_{n=0}^{\infty}\sum_{l=0}^{n}g_{n-l}^{(1-\alpha)}g_{l}^{(\alpha-1)}+\sum_{n=0}^{\infty}\sum_{l=0}^{n}g_{n-l}^{(1-\alpha)}\left(v_{l}+l_{l}^{(k,i)}\right) \\
 &= 1+\sum_{n=0}^{\infty}g_{n}^{(1-\alpha)}\sum_{l=0}^{\infty}\left(v_{l}+l_{l}^{(k,i)}\right)=1,
 \end{split}
 \end{equation*}
 where the last equality holds in view of the Lemma \ref{le:gn}.
\end{proof}

Therefore, according to the statements from Theorem \ref{th:pw}, Lemma \ref{le:le1} and Lemma \ref{le:le2}, we can immediately obtain the following result.
\begin{proposition}\label{pro:1}
For $1\le i\le k\le 3$ and $0<\alpha<1$, let  
\begin{equation}\label{eq:rnki}
\frac{1}{\psi^{(k,i)}(\xi)}=r^{(k,i)}(\xi)=\sum_{n=0}^{\infty}r_{n}^{(k,i)}\xi^{n}, 
\end{equation}
then there exist bounded positive constants, denoted by $M_{\alpha}^{(k,i)}$, such that $\sum\limits_{n=0}^{\infty}
|r_{n}^{(k,i)}|=M_{\alpha}^{(k,i)}$ holds for each $k, i$.
\end{proposition}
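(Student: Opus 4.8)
The plan is to obtain Proposition \ref{pro:1} as an immediate application of the Wiener-type inversion theorem recorded in Theorem \ref{th:pw}, feeding it the two structural facts about $\psi^{(k,i)}(\xi)$ that the preceding lemmas have already established. Recall that Theorem \ref{th:pw} guarantees: if a power series $f(z)=\sum_{n} c_{n} z^{n}$ has absolutely summable coefficients and never vanishes on the closed unit disc $|z|\le 1$, then $1/f(z)$ again expands in a power series with absolutely summable coefficients. The whole task therefore reduces to checking that $\psi^{(k,i)}$ satisfies both hypotheses on the range $1\le i\le k\le 3$.

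First I would confirm the absolute-summability hypothesis. By Lemma \ref{le:le1}, the coefficients of $\psi^{(k,i)}(\xi)$ lie in $l_{1}$ for all $1\le i\le k\le 6$, hence in particular throughout the range $1\le i\le k\le 3$ of interest here. This supplies exactly the condition $\sum_{n}|c_{n}|<\infty$ with $c_{n}$ the $n$-th coefficient of $\psi^{(k,i)}$. Next I would invoke the non-vanishing hypothesis: Lemma \ref{le:le2} states precisely that $\psi^{(k,i)}(\xi)\ne 0$ for every $|\xi|\le 1$ when $1\le i\le k\le 3$, which is the closed-disc condition required by Theorem \ref{th:pw}.

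With both hypotheses verified, applying Theorem \ref{th:pw} to $f=\psi^{(k,i)}$ yields that $r^{(k,i)}(\xi)=1/\psi^{(k,i)}(\xi)=\sum_{n} r_{n}^{(k,i)}\xi^{n}$ with $\sum_{n}|r_{n}^{(k,i)}|<\infty$, as asserted in \eqref{eq:rnki}. I would then set $M_{\alpha}^{(k,i)}:=\sum_{n}|r_{n}^{(k,i)}|$, which is finite by the above; it is strictly positive because $M_{\alpha}^{(k,i)}\ge |r_{0}^{(k,i)}|=1/|\psi^{(k,i)}(0)|>0$, the denominator being nonzero by Lemma \ref{le:le2} evaluated at $\xi=0$. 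This establishes the existence of the bounded positive constants $M_{\alpha}^{(k,i)}$ claimed in the statement.

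There is no genuine analytic obstacle left to overcome, since Lemmas \ref{le:le1} and \ref{le:le2} carry the substantive weight (the $l_{1}$ bound through the decomposition of $\psi^{(k,i)}$ into $(1-\xi)^{1-\alpha}I(\xi)$ plus an absolutely convergent remainder, and the non-vanishing through the real-part positivity computed in Theorem \ref{coro:1}). The only point demanding care is to observe why the conclusion is stated only for $1\le i\le k\le 3$: Lemma \ref{le:le1} supplies $l_{1}$ summability up to $k\le 6$, but the non-vanishing in Lemma \ref{le:le2} is proved only up to $k\le 3$, so $1\le i\le k\le 3$ is exactly the intersection on which Theorem \ref{th:pw} can presently be applied.
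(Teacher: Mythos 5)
Your proposal is correct and coincides with the paper's own argument: the paper obtains Proposition \ref{pro:1} immediately by applying the Wiener inversion theorem (Theorem \ref{th:pw}) to $\psi^{(k,i)}$, with the $l^{1}$ hypothesis supplied by Lemma \ref{le:le1} and the non-vanishing on $|\xi|\le 1$ by Lemma \ref{le:le2}, exactly as you do. Your added remarks on the positivity of $M_{\alpha}^{(k,i)}$ and on why the range is restricted to $1\le i\le k\le 3$ are accurate but not needed beyond what the paper states.
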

\begin{theorem}
Let $u(t)$ and $\{u_{n}\}_{n=k}^{N}$ be the solutions of equations \eqref{eq:nolinfode} and \eqref{eq:node}, respectively. The function $f(t, u(t))$ in \eqref{eq:nolinfode} is assumed to satisfy the Lipschitz continuous condition with respect to the second variable $u$, and chosen properly such that the solution of \eqref{eq:nolinfode} is sufficiently smooth.  Then  
\begin{description}
  \item [i)] in the cases of $1\le k\le 3$, it holds
\begin{equation}
|e_{n}^{(k,k)}|\le C^{(k,k)}\left(\sum_{m=0}^{k-1}|e_{m}^{(k,k)}|+\left(\Delta t\right)^{k+1-\alpha}t_{n-1}^{\alpha}\right),\hspace{0.618cm} k\le n\le N
\end{equation}
  \item [ii)]in the cases of $1\le i<k\le 3$, it holds 
  \begin{equation}
  |e_{n}^{(k,i)}|\le C^{(k,i)}\left(\sum_{m=0}^{k-1}|e_{m}^{(k,i)}|+\left(\Delta t\right)^{k}+\left(\Delta t\right)^{k+1-\alpha}t_{n-1}^{\alpha}\right),\hspace{0.618cm} k\le n\le N
  \end{equation}
\end{description}
for sufficiently small $\Delta t>0$, 
where $N\Delta t=T$ is fixed and constant $C^{(k,i)}>0$ is independent of $N$ and $n$.
\end{theorem}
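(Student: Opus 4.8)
The plan is to pass to generating functions, invert the convolution operator through the factorisation \eqref{eq:rela}, read off the coefficients of the error series, and close the argument with a discrete fractional Gronwall inequality. I would start from the identity \eqref{eq:omegakie}. Using $\omega^{(k,i)}(\xi)=(1-\xi)^{\alpha}\psi^{(k,i)}(\xi)$ from \eqref{eq:rela} together with Proposition \ref{pro:1}, which guarantees that $r^{(k,i)}(\xi)=1/\psi^{(k,i)}(\xi)$ has an $l^{1}$ coefficient sequence, I solve for the error series
\begin{equation*}
e^{(k,i)}(\xi)=(1-\xi)^{-\alpha}r^{(k,i)}(\xi)\Big(\sum_{m=0}^{k-1}e_{m}^{(k,i)}s_{m}^{(k,i)}(\xi)+(\Delta t)^{\alpha}\delta f^{(k,i)}(\xi)+(\Delta t)^{\alpha}\tau^{(k,i)}(\xi)\Big).
\end{equation*}
Writing $(1-\xi)^{-\alpha}r^{(k,i)}(\xi)=\sum_{n=0}^{\infty}p_{n}^{(k,i)}\xi^{n}$ with $p_{n}^{(k,i)}=\sum_{l=0}^{n}g_{n-l}^{(-\alpha)}r_{l}^{(k,i)}$, I read off $e_{n+k}^{(k,i)}$ as the convolution of the kernel $p^{(k,i)}$ against the three source terms, so that all subsequent work is estimating this convolution.

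The key quantitative input is the asymptotics of $p_{n}^{(k,i)}$. By Lemma \ref{le:gn} (case $\beta=-\alpha\in(-1,0)$) and the relation \eqref{eq:gammaasym}, the weights $g_{n}^{(-\alpha)}$ are nonnegative, decreasing and satisfy $g_{n}^{(-\alpha)}\cong n^{\alpha-1}/\Gamma(\alpha)$ with $\sum_{l=0}^{n}g_{l}^{(-\alpha)}=g_{n}^{(-\alpha-1)}=O(n^{\alpha})$; since $\{r_{l}^{(k,i)}\}\in l^{1}$ and $r^{(k,i)}(1)=1/\psi^{(k,i)}(1)=1$ (the latter value established in the proof of Lemma \ref{le:le2}), the convolution obeys $p_{n}^{(k,i)}=O(n^{\alpha-1})$ and hence $\sum_{l=0}^{n}|p_{l}^{(k,i)}|=O(n^{\alpha})$. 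Combined with the bounds \eqref{eq:ski}, two convolution estimates are then needed: using the Beta integral $B(\alpha,1-\alpha)=\Gamma(\alpha)\Gamma(1-\alpha)$ one finds $\sum_{l=0}^{n}|p_{n-l}^{(k,i)}|\,|s_{l,0}^{(k,i)}|=O(1)$, while the faster decay $|s_{l,m}^{(k,i)}|=O(l^{-\alpha-1})$ for $m\ge 1$ gives $\sum_{l=0}^{n}|p_{n-l}^{(k,i)}|\,|s_{l,m}^{(k,i)}|=O(n^{\alpha-1})$; together these control the starting-value contribution by $C^{(k,i)}\sum_{m=0}^{k-1}|e_{m}^{(k,i)}|$.

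For the truncation contribution I substitute Theorem \ref{th:errorestmat}, which applies since $f$ is chosen so that $u\in C^{k+1}[0,T]$. The uniform part $|\tau_{l+k}^{(k,i)}|\le C\Delta t^{k+1-\alpha}$ yields $(\Delta t)^{\alpha}\sum_{l=0}^{n}|p_{n-l}^{(k,i)}|\,C\Delta t^{k+1-\alpha}=O\big(\Delta t^{k+1-\alpha}(n\Delta t)^{\alpha}\big)=O(\Delta t^{k+1-\alpha}t_{n}^{\alpha})$, which after the index shift $n\mapsto n-k$ (so that $t_{n-k}^{\alpha}\le t_{n-1}^{\alpha}$) produces the term $\Delta t^{k+1-\alpha}t_{n-1}^{\alpha}$ present in both parts. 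When $i<k$ the extra singular part $C(t_{l+i})^{-\alpha-1}\Delta t^{k+1}$ contributes $C\Delta t^{k}\sum_{l}|p_{n-l}^{(k,i)}|(l+i)^{-\alpha-1}=O(\Delta t^{k}n^{\alpha-1})=O(\Delta t^{k})$, accounting for the additional $(\Delta t)^{k}$ term in part ii).

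Finally, the nonlinear source is handled by the Lipschitz hypothesis $|\delta f_{l+k}^{(k,i)}|\le L|e_{l+k}^{(k,i)}|$, which turns the remaining term into $L(\Delta t)^{\alpha}\sum_{l=0}^{n}|p_{n-l}^{(k,i)}|\,|e_{l+k}^{(k,i)}|$. Absorbing the diagonal term $L(\Delta t)^{\alpha}|p_{0}^{(k,i)}|\,|e_{n+k}^{(k,i)}|$ into the left-hand side for $\Delta t$ small, I obtain a self-referential inequality
\begin{equation*}
|e_{n+k}^{(k,i)}|\le a_{n}^{(k,i)}+B(\Delta t)^{\alpha}\sum_{l=0}^{n-1}(n-l)^{\alpha-1}|e_{l+k}^{(k,i)}|,
\end{equation*}
where $a_{n}^{(k,i)}$ collects the data and truncation bounds already derived. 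The main obstacle is closing this inequality: the kernel $(n-l)^{\alpha-1}$ is weakly singular, so an ordinary discrete Gronwall lemma fails, and I would instead invoke a discrete fractional Gronwall inequality whose applicability rests precisely on the uniform bound $(\Delta t)^{\alpha}\sum_{l=0}^{n}|p_{l}^{(k,i)}|\le C(n\Delta t)^{\alpha}\le CT^{\alpha}$ on the fixed interval $[0,T]$. This yields a constant $C^{(k,i)}$ independent of $n$ and $N$ multiplying $a_{n}^{(k,i)}$, which is exactly the asserted estimate.
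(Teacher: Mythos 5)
Your proposal follows the paper's architecture faithfully (generating functions, the factorisation \eqref{eq:rela}, inversion via Proposition \ref{pro:1}, the bounds \eqref{eq:ski}, Theorem \ref{th:errorestmat} for the truncation terms, then a weakly singular discrete Gronwall inequality), but it contains one genuine gap: the pointwise estimate $p_{n}^{(k,i)}=O(n^{\alpha-1})$ for your collapsed kernel $p^{(k,i)}=g^{(-\alpha)}\ast r^{(k,i)}$ does not follow from the two facts you cite, namely $\{r_{n}^{(k,i)}\}\in l^{1}$ and $r^{(k,i)}(1)=1$. Proposition \ref{pro:1} rests on Wiener's lemma (Theorem \ref{th:pw}), which yields absolute summability of $r_{n}^{(k,i)}$ but no decay rate whatsoever; if, say, a summable sequence $r$ carried mass $2^{-j}$ at the sparse indices $n=2^{2^{j}}$, then $p_{n}\ge g_{0}^{(-\alpha)}r_{n}=2^{-j}\gg n^{\alpha-1}$ along that subsequence, so the inference is invalid without extra information on $r$. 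This unproved bound is load-bearing at exactly two places in your argument: in the Beta-integral estimate $\sum_{l}|p_{n-l}^{(k,i)}|\,|s_{l,0}^{(k,i)}|=O(1)$, and, more seriously, in extracting the weakly singular kernel $(n-l)^{\alpha-1}$ of your final self-referential inequality, without which the discrete fractional Gronwall lemma cannot be invoked. Your cumulative bound $\sum_{l\le n}|p_{l}^{(k,i)}|=O(n^{\alpha})$ is correct, but it alone does not supply the kernel structure that the Gronwall step requires.

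The paper avoids this by never collapsing the two kernels: it keeps the double convolution $\sum_{j}|r_{n-k-j}^{(k,i)}|\sum_{l}g_{j-l}^{(-\alpha)}(\cdot)$ and exploits exact coefficient identities in the inner sum, namely $\sum_{i=0}^{j}g_{j-i}^{(-\alpha)}g_{i}^{(\alpha-1)}=1$ (from $(1-\xi)^{-\alpha}(1-\xi)^{\alpha-1}=(1-\xi)^{-1}$) for the $m=0$ starting term after dominating $|s_{n,0}^{(k,i)}|\le \tilde{c}\,g_{n}^{(\alpha-1)}$, and $\sum_{l=0}^{j}g_{j-l}^{(-\alpha)}|g_{l}^{(\alpha)}|=2g_{j}^{(-\alpha)}$ for the $m\ge 1$ terms and the singular part of $\tau^{(k,i)}$ when $i<k$; in this way only $\|r^{(k,i)}\|_{1}=M_{\alpha}^{(k,i)}$ enters, multiplicatively. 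In the Gronwall step the paper likewise runs the recursion with the kernel $g_{n-j}^{(-\alpha)}$, whose exact asymptotics $g_{n}^{(-\alpha)}\cong n^{\alpha-1}/\Gamma(\alpha)$ are available from \eqref{eq:gammaasym} (the binomial coefficients, unlike $r^{(k,i)}$, do have a known rate), absorbing $M_{\alpha}^{(k,i)}$ into the constant $\tilde{L}^{(k,i)}$ before applying the weakly singular discrete Gronwall inequality of \cite{DixonM:1986}. Your remaining steps --- the treatment of the uniform truncation part producing $\left(\Delta t\right)^{k+1-\alpha}t_{n-1}^{\alpha}$, the extra $\left(\Delta t\right)^{k}$ contribution for $i<k$, and the absorption of the diagonal term for small $\Delta t$ --- are sound and match the paper once the kernel is handled in this uncollapsed form.
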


\begin{proof}
Substituting formula \eqref{eq:rela} into \eqref{eq:omegakie} and using \eqref{eq:rnki}, one has 
\begin{equation}\label{eq:nonlinear11}
\begin{split}
e^{(k,i)}(\xi)=\frac{r^{(k,i)}(\xi)}{(1-\xi)^{\alpha}}\left(\sum_{m=0}^{k-1}e_{m}^{(k,i)}s_{m}^{(k,i)}(\xi)+(\Delta t)^{\alpha}\delta f^{(k,i)}(\xi)+(\Delta t)^{\alpha}\tau^{(k,i)}(\xi)\right),
\end{split}
\end{equation}
which can also be written into a matrix-vector form
\begin{equation}\label{eq:MD12}
\begin{split}
	\left[\begin{array}{l} e_{k}^{(k,i)} \\ e_{k+1}^{(k,i)} \\ \vdots  \\ e_{N}^{(k,i)}\end{array}\right]
	&=\left[\begin{array}{llll}
	    r_{0}^{(k,i)} &                    &  &     \\
	    r_{1}^{(k,i)} & r_{0}^{(k,i)} &  &    \\
        \vdots &  \ddots & \ddots  &    \\
	    r_{N-k}^{(k,i)} & \cdots  & r_{1}^{(k,i)}  & r_{0}^{(k,i)} 
	\end{array}\right]
	\left[\begin{array}{llll}
	    g_{0}^{(-\alpha)} &                    &  &     \\
	    g_{1}^{(-\alpha)} & g_{0}^{(-\alpha)} &  &    \\
        \vdots & \ddots  & \ddots  &    \\
	    g_{N-k}^{(-\alpha)} & \cdots  & g_{1}^{(-\alpha)}  & g_{0}^{(-\alpha)} 
	\end{array}\right] \\
&\left(e_{0}^{(k,i)}
	 \left[\begin{array}{l} s_{0,0}^{(k,i)} \\ s_{1,0}^{(k,i)} \\ \vdots  \\ s_{N-k,0}^{(k,i)}\end{array}\right]+\cdots
	 +e_{k-1}^{(k,i)}\left[\begin{array}{l} s_{0,k-1}^{(k,i)} \\ s_{1,k-1}^{(k,i)} \\ \vdots  \\ s_{N-k,k-1}^{(k,i)}\end{array}\right]
	 +(\Delta t)^{\alpha}\left[\begin{array}{l} \delta f_{k}^{(k,i)} \\ \delta f_{k+1}^{(k,i)} \\ \vdots \\   \delta f_{N}^{(k,i)}\end{array}\right]+(\Delta t)^{\alpha}\left[\begin{array}{l} \tau_{k}^{(k,i)} \\ \tau_{k+1}^{(k,i)} \\ \vdots \\   \tau_{N}^{(k,i)}\end{array}\right]\right)
\end{split}
\end{equation}
with arbitrary $N\in\mathbb{N}$. Therefore for any $k\le n\le N$, it holds that
\begin{equation}\label{eq:}
\begin{split}
e_{n}^{(k,i)}&=\sum_{m=0}^{k-1}e_{m}^{(k,i)}\sum_{j=0}^{n-k}r_{n-k-j}^{(k,i)}\sum_{i=0}^{j}g_{j-i}^{(-\alpha)}s_{i,m}^{(k,i)}+(\Delta t)^{\alpha}\sum_{j=0}^{n-k}r_{n-k-j}^{(k,i)}\sum_{i=0}^{j}g_{j-i}^{(-\alpha)}\delta f_{i+k}^{(k,i)}  \\
            &\hspace{0.918cm}+(\Delta t)^{\alpha}\sum_{j=0}^{n-k}r_{n-k-j}^{(k,i)}\sum_{i=0}^{j}g_{j-i}^{(-\alpha)}\tau_{i+k}^{(k,i)},
\end{split}
\end{equation}
where the coefficients $\{g_{n}^{(-\alpha)}\}$ are provided in
Lemma \ref{le:gn}. Since it is assumed that function $f(t, u(t))$ satisfies the Lipschitz continuous condition, there exists constant $L^{(k,i)}>0$ such that $|\delta f_{n}^{(k,i)}|\le L^{(k,i)}|e_{n}^{(k,i)}|$ for $k\le n\le N$. It follows
\begin{equation}\label{eq:enki}
\begin{split}
|e_{n}^{(k,i)}|&\le \sum_{m=0}^{k-1}|e_{m}^{(k,i)}|\sum_{j=0}^{n-k}|r_{n-k-j}^{(k,i)}|\sum_{i=0}^{j}g_{j-i}^{(-\alpha)}|s_{i,m}^{(k,i)}|  \\
               &+(\Delta t)^{\alpha}\sum_{j=0}^{n-k}g_{n-k-j}^{(-\alpha)}\sum_{i=0}^{j}|r_{j-i}^{(k,i)}|\left(L^{(k,i)}|e_{i+k}^{(k,i)}|+|\tau_{i+k}^{(k,i)}|\right). \\
\end{split}
\end{equation}
On one hand, based on the relations \eqref{eq:ski} and \eqref{rela:2}, there exist constant $\tilde{c}_{k,i}>0$, such that $
|s_{n,0}^{(k,i)}|\le c_{k,i}\frac{n^{-\alpha}}{\Gamma(1-\alpha)}\le \tilde{c}_{k,i}g_{n}^{(\alpha-1)}$, one hence obtains 
\begin{equation}\label{eq:ski0}
\begin{split}
\sum_{j=0}^{n-k}|r_{n-k-j}^{(k,i)}|\sum_{i=0}^{j}g_{j-i}^{(-\alpha)}|s_{i,0}^{(k,i)}|&\le \tilde{c}_{k,i}\sum_{j=0}^{n-k}|r_{n-k-j}^{(k,i)}|\sum_{i=0}^{j}g_{j-i}^{(-\alpha)}g_{i}^{(\alpha-1)} \\
&\le \tilde{c}_{k,i}\sum_{j=0}^{\infty}|r_{j}^{(k,i)}|=\tilde{c}_{k,i}M_{\alpha}^{(k,i)}, 
\end{split}
\end{equation}
where $\sum_{i=0}^{j}g_{j-i}^{(-\alpha)}g_{i}^{(\alpha-1)}=1$ for any $j\ge 0$ in view of the equality $(1-\xi)^{-\alpha}(1-\xi)^{\alpha-1}=(1-\xi)^{-1}$. On the other hand, there exist constant $\tilde{c}_{m}^{(k,i)}>0$, such that $|s_{n,m}^{(k,i)}|\le c_{m}^{(k,i)}\frac{n^{-\alpha-1}}{|\Gamma(-\alpha)|}\le \tilde{c}_{m}^{(k,i)}|g_{n}^{(\alpha)}|$, and for $m\ge 1$, it yields 
\begin{equation}\label{eq:skim}
\begin{split}
\sum_{j=0}^{n-k}|r_{n-k-j}^{(k,i)}|\sum_{l=0}^{j}g_{j-l}^{(-\alpha)}|s_{l,m}^{(k,i)}|&\le \tilde{c}_{m}^{(k,i)}\sum_{j=0}^{n-k}|r_{n-k-j}^{(k,i)}|\sum_{l=0}^{j}g_{j-l}^{(-\alpha)}|g_{l}^{(\alpha)}| \\
&\le 2\tilde{c}_{m}^{(k,i)}\sum_{j=0}^{n-k}|r_{n-k-j}^{(k,i)}|g_{j}^{(-\alpha)}, \\
\end{split}
\end{equation}
since it is known that $\sum_{l=0}^{j}g_{j-l}^{(-\alpha)}g_{l}^{(\alpha)}=0$ for any $j\ge 1$ 
according to the equality $(1-\xi)^{-\alpha}(1-\xi)^{\alpha}=1$, in combination with Lemma \ref{le:gn}, there yields $\sum_{l=0}^{j}g_{j-l}^{(-\alpha)}|g_{l}^{(\alpha)}|=g_{j}^{(-\alpha)}g_{0}^{(\alpha)}-\sum_{l=1}^{j}g_{j-l}^{(-\alpha)}g_{l}^{(\alpha)}=2g_{j}^{(-\alpha)}$. In addition, it is known that sequences $\{r_{n}^{(k,i)}\}$ belong to $l^{1}$ space, 
and $g_{n}^{(-\alpha)}\to 0$ as $n\to \infty$, therefore, we know that the sequences $\sum_{j=0}^{n-k}|r_{n-k-j}^{(k,i)}|g_{j}^{(-\alpha)}\to 0$ as $n\to \infty$. Then, at least, the sequences $\sum_{j=0}^{n-k}|r_{n-k-j}^{(k,i)}|\sum_{l=0}^{j}g_{j-l}^{(-\alpha)}|s_{l,m}^{(k,i)}|$ can be bounded by $2\tilde{c}_{m}^{(k,i)}M_{\alpha}^{(k,i)}$. 

In the cases of $1\le k\le 3$, recall that $|\tau_{n}^{(k,k)}|\le C_{\alpha}^{(k)}\left(\Delta t\right)^{k+1-\alpha}$ uniformly for $n\ge k$ in Theorem \ref{th:errorestmat}, it follows from \eqref{eq:gammaasym} that
\begin{equation}\label{eq:taukk}
\begin{split}
 (\Delta t)^{\alpha}\sum_{j=0}^{n-k}|r_{n-k-j}^{(k,k)}|\sum_{i=0}^{j}g_{j-i}^{(-\alpha)}|\tau_{i+k}^{(k,k)}|
 &=(\Delta t)^{\alpha}\sum_{j=0}^{n-k}g_{n-k-j}^{(-\alpha)}\sum_{i=0}^{j}|r_{j-i}^{(k,k)}||\tau_{i+k}^{(k,k)}|\\
 &\le\left(\Delta t\right)^{k+1}C_{\alpha}^{(k)}M_{\alpha}^{(k,k)}\sum_{j=0}^{n-k}g_{j}^{(-\alpha)} \\
&\le\left(\Delta t\right)^{k+1}C_{\alpha}^{(k)}M_{\alpha}^{(k,k)}\big(1+C\sum_{j=1}^{n-k}\frac{j^{\alpha-1}}{\Gamma(\alpha)}\big)  \\
&\le\left(\Delta t\right)^{k+1}C_{\alpha}^{(k)}M_{\alpha}^{(k,k)}\big(1+
\frac{C}{\Gamma(\alpha)}\int_{0}^{n-k}t^{\alpha-1}\mathrm{d}t\big) \\
&\le\tilde{C}_{\alpha}^{(k,k)}\left((\Delta t)^{k+1}+(\Delta t)^{k+1-\alpha}t_{n-k}^{\alpha}\right).
\end{split}
\end{equation}
In the other case of $1\le i<k\le 3$, according to Theorem \ref{th:errorestmat}, there exists constant $C_{\alpha}^{(k,i)}>0$, such that 
\begin{equation*}
|\tau_{n}^{(k,i)}|\le C_{\alpha}^{(k,i)}\left((\Delta t)^{k-\alpha}\frac{(n-k)^{-\alpha-1}}{|\Gamma(-\alpha)|}+\frac{(\Delta t)^{k+1-\alpha}}{\Gamma(1-\alpha)}\right),
\end{equation*}
if $n\ge k$, 
together with \eqref{eq:gammaasym}, it follows
\begin{equation}\label{eq:tauki}
\begin{split}
(\Delta t)^{\alpha}\sum_{j=0}^{n-k}|r_{n-k-j}^{(k,i)}|\sum_{l=0}^{j}g_{j-l}^{(-\alpha)}|\tau_{l+k}^{(k,i)}|&\le C_{\alpha}^{(k,i)}\Big((\Delta t)^{k}\tilde{c}_{\alpha}\sum_{j=0}^{n-k}|r_{n-k-j}^{(k,i)}|\sum_{l=0}^{j}g_{j-l}^{(-\alpha)}|g_{l}^{(\alpha)}| \\
&\hspace{1.236cm}+(\Delta t)^{k+1}\sum_{j=0}^{n-k}|r_{n-k-j}^{(k,i)}|\sum_{l=0}^{n-k}g_{l}^{(-\alpha)}\Big) \\
&\le C_{\alpha}^{(k,i)}\Big(2(\Delta t)^{k}\tilde{c}_{\alpha}\sum_{j=0}^{n-k}|r_{n-k-j}^{(k,i)}|g_{j}^{(-\alpha)} \\
&\hspace{1.236cm}+(\Delta t)^{k+1}\sum_{j=0}^{n-k}|r_{n-k-j}^{(k,i)}|g_{n-k}^{(-\alpha-1)}\Big)\\
&\le \tilde{C}_{\alpha}^{(k,i)}\left((\Delta t)^{k}+(\Delta t)^{k+1-\alpha}t_{n-k}^{\alpha}\right).
\end{split}
\end{equation}
Therefore formula \eqref{eq:enki} becomes 
\begin{equation*}
\begin{split}
             |e_{n}^{(k,i)}|&\le(\Delta t)^{\alpha}L^{(k,i)}\Big(\sum_{j=0}^{n-k-1}g_{n-k-j}^{(-\alpha)}\sum_{l=0}^{j}|r_{j-l}^{(k,i)}||e_{l+k}^{(k,i)}|+g_{0}^{(-\alpha)}\sum_{l=0}^{n-k-1}|r_{n-k-l}^{(k,i)}||e_{l+k}^{(k,i)}| \\
               &\hspace{2.618cm}+g_{0}^{(-\alpha)}|r_{0}^{(k,i)}||e_{n}^{(k,i)}|\Big)+\delta_{n}^{(k,i)}, \hspace{0.618cm}n\ge k. 
\end{split}
\end{equation*}
If the time step $\Delta t>0$ is chosen sufficiently small, there exists bounded constants $c_{k,i}^{*}$ such that $0<\frac{1}{1-(\Delta t)^{\alpha}L^{(k,i)}g_{0}^{(-\alpha)}|r_{0}^{(k,i)}|}\le c_{k,i}^{*}$, and it follows
\begin{equation}\label{eq:enki}
\left\{
\begin{split}
|e_{k}^{(k,i)}|\le&\tilde{\delta}_{k}^{(k,i)}, \\
|e_{n}^{(k,i)}|\le&\tilde{\delta}_{n}^{(k,i)}+(\Delta t)^{\alpha}c_{k,i}^{*}L^{(k,i)}\Big(\sum_{j=0}^{n-k-1}g_{n-k-j}^{(-\alpha)}\sum_{l=0}^{j}|r_{j-l}^{(k,i)}||e_{l+k}^{(k,i)}|\\
               &\hspace{0.618cm}+g_{0}^{(-\alpha)}\sum_{l=0}^{n-k-1}|r_{n-k-l}^{(k,i)}||e_{l+k}^{(k,i)}|\Big), \hspace{0.618cm}n\ge k+1,
\end{split}
\right.
\end{equation}
where denote by $\tilde{\delta}_{n}^{(k,i)}=c_{k,i}^{*}\delta_{n}^{(k,i)}$, and in the cases of $1\le k\le 3$, one has from \eqref{eq:ski0}, \eqref{eq:skim} and \eqref{eq:taukk} that 
\begin{equation*}
\delta_{n}^{(k,k)}=C_{\alpha}^{(k,k)}\left(\sum_{m=0}^{k-1}|e_{m}^{(k,k)}|+\left(\Delta t\right)^{k+1-\alpha}t_{n-1}^{\alpha}\right), \hspace{0.618cm}n\ge k,
\end{equation*} 
and in the cases of $1\le i<k\le 3$, from formulae \eqref{eq:ski0}, \eqref{eq:skim} and \eqref{eq:tauki}, it yields 
\begin{equation*}
\delta_{n}^{(k,i)}=C_{\alpha}^{(k,i)}\left(\sum_{m=0}^{k-1}|e_{m}^{(k,i)}|+\left(\Delta t\right)^{k}+\left(\Delta t\right)^{k+1-\alpha}t_{n-1}^{\alpha}\right),  \hspace{0.618cm}n\ge k.
\end{equation*}
The constants satisfy $C_{\alpha}^{(k,i)}=\max\{\tilde{c}_{k,i}M_{\alpha}^{(k,i)}, 2\tilde{c}_{m}^{(k,i)}M_{\alpha}^{(k,i)}, \tilde{C}_{\alpha}^{(k,i)} \}$. Then assume that the non-negative sequence $\{p_{n}^{(k,i)}\}_{n\ge 0}$ satisfies 
\begin{equation}\label{eq:pnki}
\left\{
\begin{split}
&p_{0}^{(k,i)}=\tilde{\delta}_{k}^{(k,i)}, \\
&p_{n}^{(k,i)}=\tilde{\delta}_{n+k}^{(k,i)}+\frac{(\Delta t)^{\alpha}\tilde{L}^{(k,i)}}{\Gamma(\alpha)}\sum_{j=0}^{n-1}(n-j)^{\alpha-1}p_{j}^{(k,i)}, \hspace{0.618cm}n\ge 1,
\end{split}
\right.
\end{equation} 
where the coefficient $\tilde{L}^{(k,i)}$ is chosen such that
\begin{equation*}
\tilde{L}^{(k,i)}=\max\{c_{k,i}^{*}L^{(k,i)}M_{\alpha}^{(k,i)}\left(1+\Gamma(\alpha)g_{1}^{(-\alpha)}\right), c_{k,i}^{*}L^{(k,i)}M_{\alpha}^{(k,i)}g_{n}^{(-\alpha)}n^{1-\alpha}\Gamma(\alpha)\}.
\end{equation*} 
Therefore, according to the weakly singular discrete Gronwall inequality shown in \cite{DixonM:1986}, the monotonic increasing property of sequence $\{\tilde{\delta}_{n}^{(k,i)}\}_{n\ge 0}$ yields the sequence $\{p_{n}^{(k,i)}\}_{n\ge 1}$ is monotonic increasing with respect to $n$ for each $1\le i\le k\le 3$, and 
correspondingly, it follows
\begin{equation*}
p_{n}^{(k,i)}\le \tilde{\delta}_{n+k}^{(k,i)}E_{\alpha}\left(\tilde{L}^{(k,i)}(n\Delta t)^{\alpha}\right), \hspace{0.618cm} n\ge 1,
\end{equation*}
where $E_{\alpha}(\cdot)$ is denoted by the Mittag-Leffler function. 
In addition, according to \eqref{eq:enki} and \eqref{eq:pnki}, an induction process yields that $|e_{n}^{(k,i)}|\le p_{n-k}^{(k,i)}$ as $n\ge k$, thus, one obtains that
\begin{equation*}
 |e_{n}^{(k,i)}|\le \tilde{\delta}_{n}^{(k,i)}E_{\alpha}\left(\tilde{L}^{(k,i)}(n-k)^{\alpha}\Delta t^{\alpha}\right)\le \tilde{\delta}_{n}^{(k,i)}E_{\alpha}\left(\tilde{L}^{(k,i)}T^{\alpha}\right)
\end{equation*} 
in the case of $k\le n\le N$.
\end{proof}
\begin{remark}
The provided convergence order is uniform for all $n\ge k$, especially suitable for the step $t_{n}$ near the origin. On the other hand, for those $t_{n}$ away from origin, the convergence result can be better. For example, it can be observed numerically if the computed starting values satisfy $u_{m}=u(t_{m})+O((\Delta t)^{k})$ for $1\le m\le k-1$, there holds that $|u(t_{M})-u_{M}|=O((\Delta t)^{k+1-\alpha})$ in the cases of $1\le i\le k\le3$.
\end{remark}

\section{Numerical experiments}\label{numexpsection}
In this section, we utilize formula \eqref{Dki1} to approximate the following equations in Example \ref{ex:4} and Example \ref{ex:5}, and we prescribe the starting values exactly. Since in practical computation, the starting values are normally obtained by numerical computation in advance. 
\begin{example}
\label{ex:4}
We consider the linear equation
\begin{equation}
\label{eq:4.1}
\begin{cases}
&{^C}D^{\alpha}u(t)=\lambda u(t)+f(t),  \hspace{0.3cm} t\in(0,1],  \\
&u(0)=u_{0}
\end{cases}
\end{equation}
with $0<\alpha<1$. The exact solution is $u(t)=e^{-t}\in C^{\infty}[0,1]$, and $
f(t)=-t^{1-\alpha}E_{1,2-\alpha}(-t)-\lambda e^{-t}\in C[0,1]\cap C^{\infty}(0,1]$, where the Mittag-Leffler function \cite{Podlubny:1999} is defined by
\begin{equation*}
E_{\alpha, \beta}(t)=\sum_{k=0}^{\infty}\frac{t^{k}}{\Gamma(\alpha k+\beta)}, \hspace{0.5cm} \alpha>0,\hspace{0.1cm}\beta>0.
\end{equation*}
\end{example}
\begin{figure}[ht!]
\centering
\subfigure[$\alpha=0.5$, $\lambda=-50$]{
\includegraphics[scale=0.3]{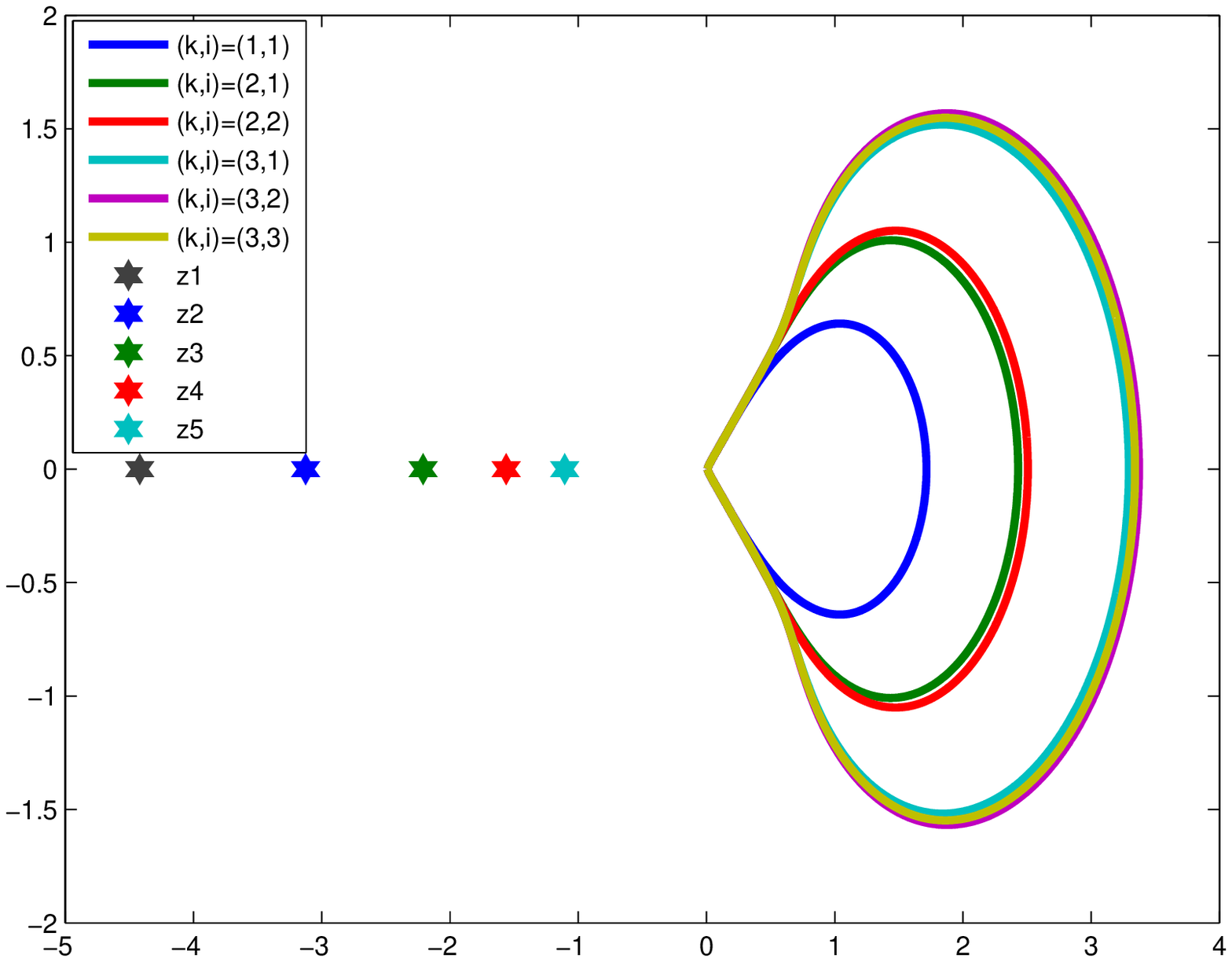}
    \label{F1:subfig1}
}
\subfigure[$\alpha=0.3$, $\lambda=20\times e^{\frac{i\pi\alpha}{2}}$]{
\includegraphics[scale=0.3]{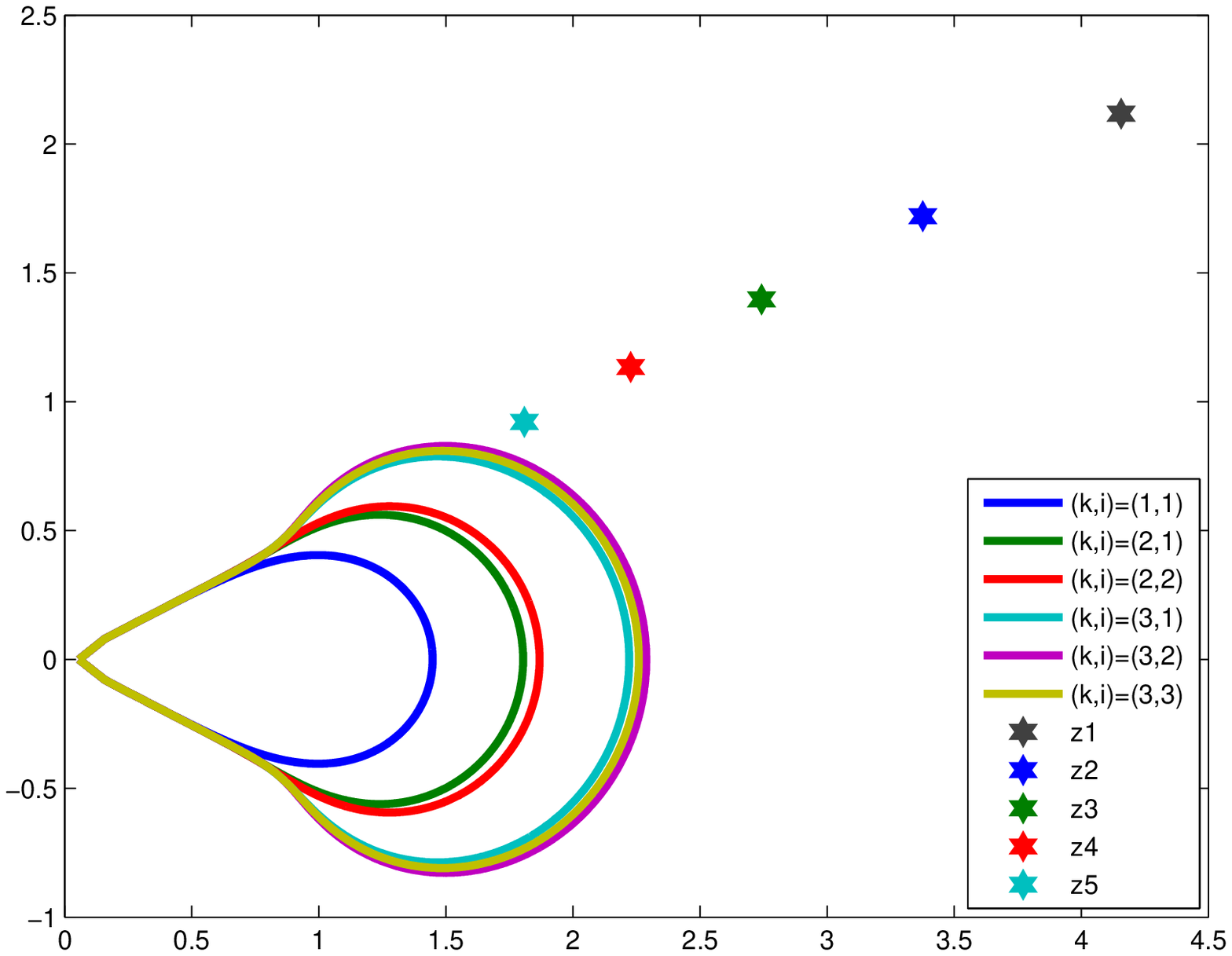}
    \label{F1:subfig2}
}
\subfigure[$\alpha=0.9$, $\lambda=1000\times e^{\frac{i\pi\alpha}{2}}$]{
\includegraphics[scale=0.3]{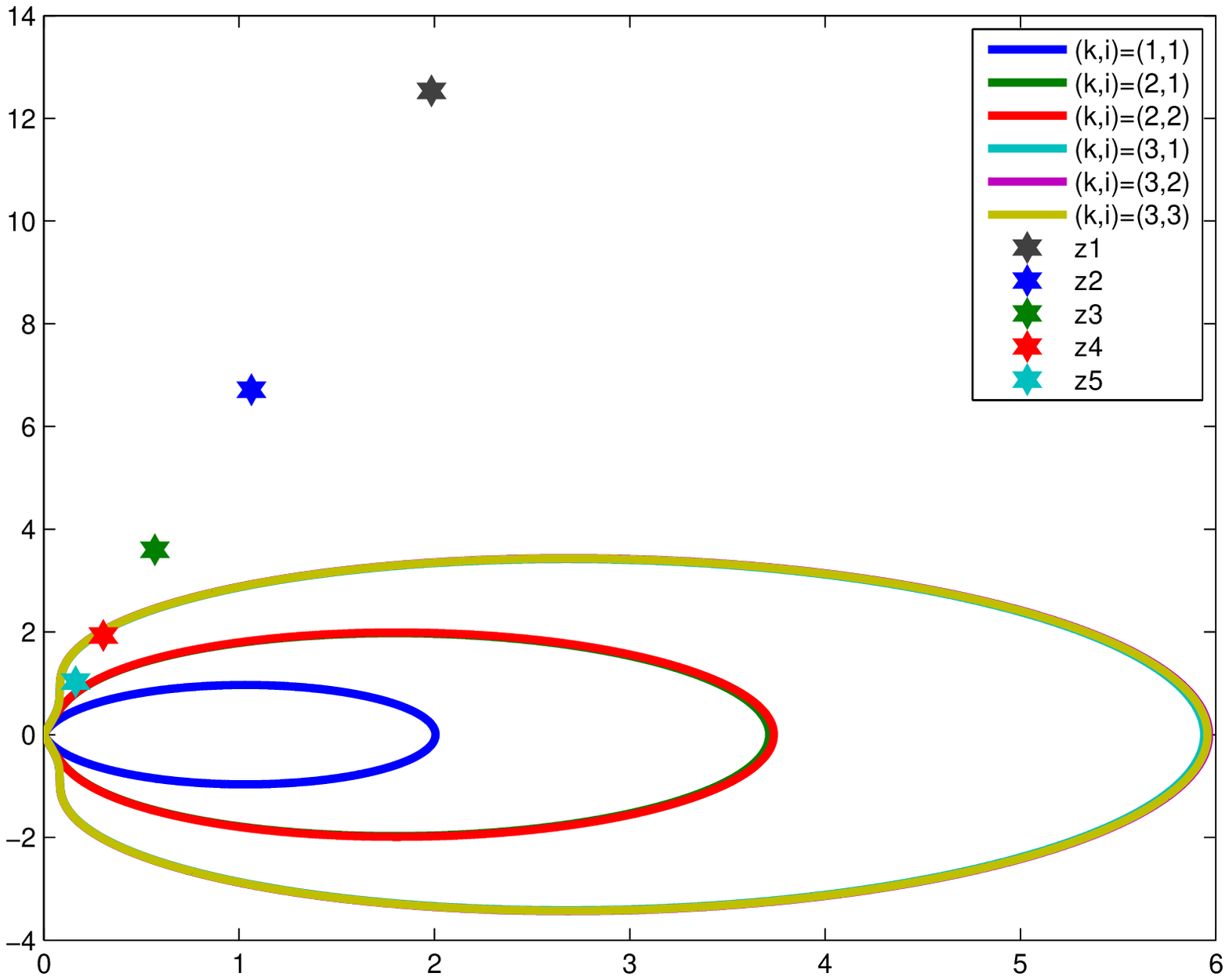}
    \label{F1:subfig3}
}
\subfigure[$\alpha=0.98$, $\lambda=500i$]{
\includegraphics[scale=0.3]{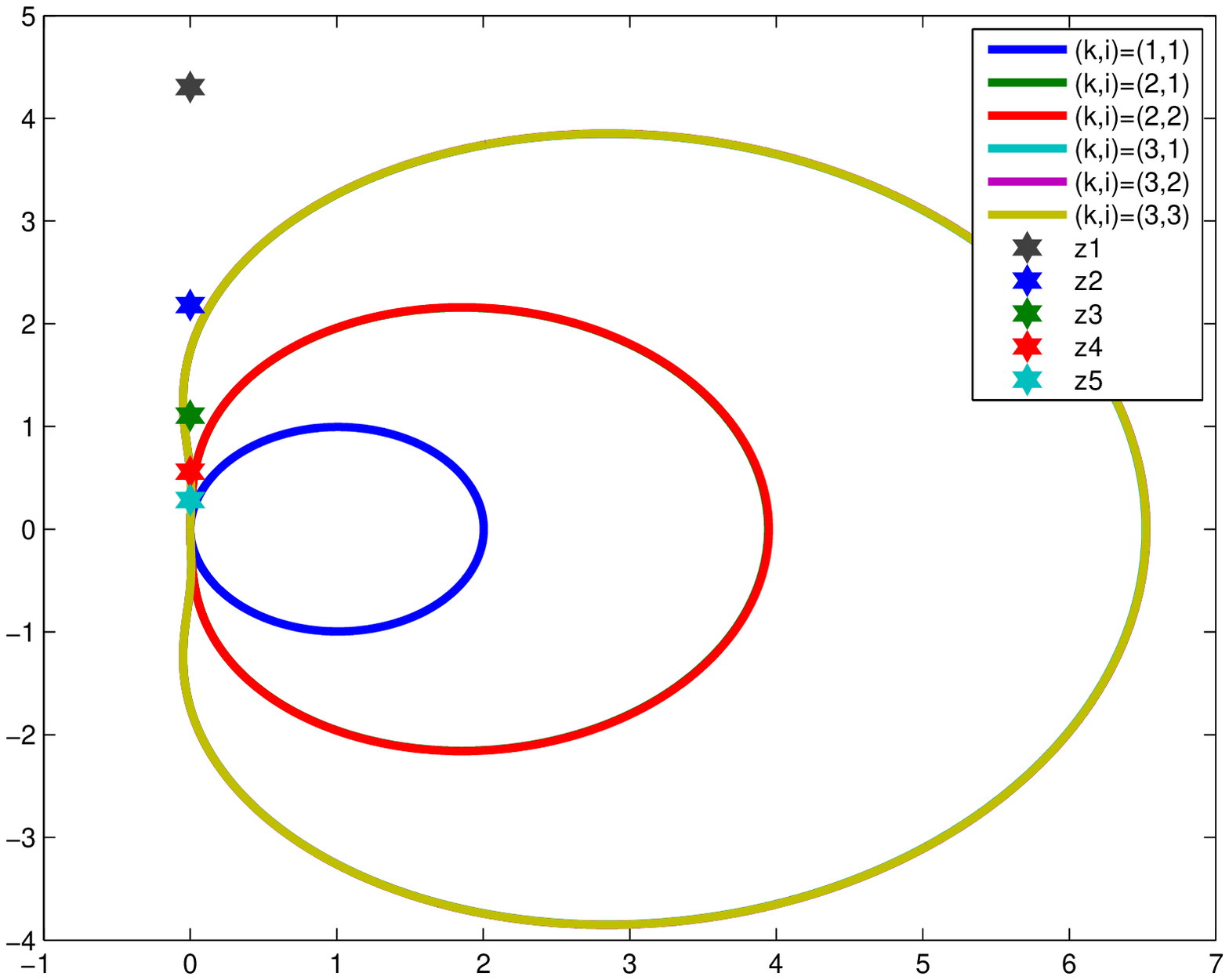}
    \label{F1:subfig4}
}
\caption{The boundary of the stability region for different $\alpha$ and $\lambda$.}
\label{fig:F1}
\end{figure}

\begin{table}[ht!]
\centering
\caption{The error accuracy and convergence rate of $|u(t_{M})-u_{M}|$ in Example \ref{ex:4} for different $\alpha$ and $\lambda$.}	
\label{ta:3} 
\footnotesize
\begin{tabular*}{\textwidth}{@{\extracolsep{0.1cm}}c@{\extracolsep{0.4cm}}c@{\extracolsep{0.5cm}}l@{\extracolsep{0.5cm}}l@{\extracolsep{0.8cm}}l@{\extracolsep{0.8cm}} l@{\extracolsep{0.8cm}}l @{\extracolsep{0.8cm}}l@{\extracolsep{0.8cm}}l@{\extracolsep{0.1cm}} }
\toprule
$\alpha$&$\lambda$ & $M$ &\multicolumn{2}{l}{$(k, i)=(1,1)$} & \multicolumn{2}{l}{$(k,i)=(2, 1)$}&\multicolumn{2}{l}{$(k,i)=(2, 2)$} \\
\cline{4-9}
&&&$|u(t_{M})-u_{M}|$&rate&$|u(t_{M})-u_{M}|$ &rate &$|u(t_{M})-u_{M}|$ &rate \\
\midrule
 0.5 & -1 & 128 & 1.59038E-04 & - & 1.60073E-07 & - & 1.34983E-07 & -   \\
  &  & 256 & 5.53407E-05 & 1.52 & 2.86635E-08 & 2.48 & 2.37399E-08 & 2.51  \\
  &  & 512 & 1.93502E-05 & 1.52 & 5.11315E-09 & 2.49 & 4.18230E-09 & 2.50  \\
  &  & 1024 & 6.78837E-06 & 1.51 & 9.09687E-10 & 2.49 & 7.37621E-10 & 2.50  \\
  &  & 2048 & 2.38698E-06 & 1.51 & 1.61541E-10 & 2.49 & 1.30187E-10 & 2.50  \\
\midrule
   0.3 & $20\times e^{\frac{i\pi\alpha}{2}}$ & 128 & 1.34901E-06 & - & 2.69029E-09 & - & 1.08970E-09 & -   \\
  &  & 256 & 3.73401E-07 & 1.85 & 4.44355E-10 & 2.60 & 1.62447E-10 & 2.75  \\
  &  & 512 & 1.04588E-07 & 1.84 & 7.14993E-11 & 2.64 & 2.44242E-11 & 2.73  \\
  &  & 1024 & 2.96205E-08 & 1.82 & 1.13426E-11 & 2.66 & 3.69238E-12 & 2.73  \\
  &  & 2048 & 8.47625E-09 & 1.81 & 1.78666E-12 & 2.67 & 5.57796E-13 & 2.73  \\
  \midrule
0.9 & $1000\times e^{\frac{i\pi\alpha}{2}}$ & 128 & 7.84215E-07 & - & 3.94997E-09 & - & 3.83098E-09 & -   \\
  &  & 256 & 3.63985E-07 & 1.11 & 9.18845E-10 & 2.10 & 8.91101E-10 & 2.10  \\
  &  & 512 & 1.69345E-07 & 1.10 & 2.14033E-10 & 2.10 & 2.07571E-10 & 2.10  \\
  &  & 1024 & 7.88889E-08 & 1.10 & 4.98901E-11 & 2.10 & 4.83852E-11 & 2.10  \\
  &  & 2048 & 3.67748E-08 & 1.10 & 1.16334E-11 & 2.10 & 1.12827E-11 & 2.10  \\
\midrule
0.98 & $500 i$ & 128 & 2.55224E-06 & - & 1.32455E-08 & - & 1.31778E-08 & -   \\
  &  & 256 & 1.25624E-06 & 1.02 & 3.25627E-09 & 2.02 & 3.23963E-09 & 2.02  \\
  &  & 512 & 6.18899E-07 & 1.02 & 8.01689E-10 & 2.02 & 7.97599E-10 & 2.02  \\
  &  & 1024 & 3.05047E-07 & 1.02 & 1.97518E-10 & 2.02 & 1.96512E-10 & 2.02  \\
  &  & 2048 & 1.50388E-07 & 1.02 & 4.86819E-11 & 2.02 & 4.84347E-11 & 2.02  \\
\bottomrule
\end{tabular*}
\end{table}

\begin{table}[ht!]
\centering
\caption{The error accuracy and convergence rate of $|u(t_{M})-u_{M}|$ in Example \ref{ex:4} for different $\alpha$ and $\lambda$.}	
\label{ta:4}
\footnotesize
\begin{tabular*}{\textwidth}{@{\extracolsep{\fill}}c@{\extracolsep{0.4cm}}c@{\extracolsep{0.5cm}}l@{\extracolsep{0.5cm}}l@{\extracolsep{0.7cm}}c@{\extracolsep{0.7cm}}l@{\extracolsep{0.7cm}}c @{\extracolsep{0.7cm}}l@{\extracolsep{0.7cm}}c@{\extracolsep{0.1cm}} }
\toprule
$\alpha$&$\lambda$ & $M$  &\multicolumn{2}{l}{$(k, i)=(3,1)$} & \multicolumn{2}{l}{$(k,i)=(3, 2)$}&\multicolumn{2}{l}{$(k,i)=(3, 3)$} \\
\cline{4-9}
&&&$|u(t_{M})-u_{M}|$&rate&$|u(t_{M})-u_{M}|$ &rate &$|u(t_{M})-u_{M}|$ &rate \\
\midrule
0.5 & -1 & 128 & 1.04028E-09 & - & 9.41107E-10 & - & 9.99698E-10 & -   \\
  &  & 256 & 9.23186E-11 & 3.49 & 8.25515E-11 & 3.51 & 8.86817E-11 & 3.49  \\
  &  & 512 & 8.18229E-12 & 3.50 & 7.25575E-12 & 3.51 & 7.85577E-12 & 3.50  \\
  &  & 1024 & 7.25420E-13 & 3.50 & 6.37490E-13 & 3.51 & 6.92002E-13 & 3.50  \\
  &  & 2048 & 6.82232E-14 & 3.41 & 5.34572E-14 & 3.58 & 5.85643E-14 & 3.56  \\
\midrule
0.3 & $20\times e^{\frac{i\pi\alpha}{2}}$& 128 & 1.73101E-11 & - & 1.03070E-11 & - & 1.53161E-11 & -   \\
  &  & 256 & 1.33740E-12 & 3.69 & 7.55245E-13 & 3.77 & 1.18503E-12 & 3.69  \\
  &  & 512 & 1.03673E-13 & 3.69 & 5.58325E-14 & 3.76 & 9.13054E-14 & 3.70  \\
  &  & 1024 & 6.13266E-15 & 4.08 & 4.40825E-15 & 3.66 & 7.52355E-15 & 3.60  \\
  &  & 2048 & 1.20505E-15 & 2.35 & 6.86635E-16 & 2.68 & 1.12983E-15 & 2.74  \\
\midrule
  0.9 & $1000\times e^{\frac{i\pi\alpha}{2}}$ & 128 & 2.28884E-11 & - & 2.24089E-11 & - & 2.26564E-11 & -   \\
  &  & 256 & 2.65645E-12 & 3.11 & 2.60061E-12 & 3.11 & 2.62969E-12 & 3.11  \\
  &  & 512 & 3.09069E-13 & 3.10 & 3.02593E-13 & 3.10 & 3.05970E-13 & 3.10  \\
  &  & 1024 & 9.71032E-07 & -21.58 & 4.20376E-06 & -23.73 & 4.42542E-07 & -20.46  \\
  &  & 2048 & 6.28199E+34 & -135.57 & 1.41894E+35 & -134.63 & 1.09390E+34 & -134.18  \\
 \midrule 
  0.98 & $500 i$ & 128 & 7.76488E-11 & - & 7.73754E-11 & - & 7.75095E-11 & -   \\
  &  & 256 & 9.52734E-12 & 3.03 & 9.49401E-12 & 3.03 & 9.51047E-12 & 3.03  \\
  &  & 512 & 9.64788E-07 & -16.63 & 1.77544E-06 & -17.51 & 1.37083E-06 & -17.14  \\
  &  & 1024 & 3.88686E-14 & 24.57 & 1.83022E-13 & 23.21 & 1.31363E-13 & 23.31  \\
  &  & 2048 & 1.75624E-14 & 1.15 & 1.70830E-14 & 3.42 & 1.66951E-14 & 2.98  \\
\bottomrule
\end{tabular*}
\end{table}
Figures \ref{F1:subfig1}-\ref{F1:subfig4} plot the truncated boundary locus curves $\sum_{n=0}^{6000}\omega_{n}^{(k,i)}e^{in\theta}~(0\le \theta\le 2\pi)$ in the cases of $1\le i\le k\le 3$ for different $\alpha\in (0,1)$. It is known from Theorem \ref{the:stabilityr} that the stability regions of methods \eqref{DkiTest} lie outside the corresponding curves. We introduce the points $z_{n}:=\lambda (\Delta t_{n})^{\alpha}$ for $1\le n\le 5$, where $\Delta t_{n}=1/2^{n+6}$ correspond to different time stepsize. Table \ref{ta:3} and Table \ref{ta:4} list the global error $e_{M}=u(t_{M})-u_{M}$ in Example \ref{ex:4},  where $t_{M}=M\Delta t=1$ is fixed and $M=2^{j}$ for $7\le j\le 11$, $u(t_{M})$ and $u_{M}$ are the exact solution and the computed solution, respectively. 
 
By the comparison of Figures \ref{F1:subfig1}-\ref{F1:subfig4} and Tables \ref{ta:3}-\ref{ta:4}, we can see the influence of the stability of the numerical methods on the error accuracy. In Figure \ref{F1:subfig1}, the points $z_{n}$ for $1\le n\le 5$ all lie in the stability regions in the case of $\alpha=0.5$ and $\lambda=-50$, in which situation the reliable accuracy is obtained, and it is observed that $|e_{M}|=O(\Delta t^{k+1-\alpha})$ in Table \ref{ta:3}-\ref{ta:4}. In Figure \ref{F1:subfig2}-\ref{F1:subfig3}, $z_{n}$ are choose on the half line with angle $\frac{\pi\alpha}{2}$ and different $\lambda$, it is observed that when all $\{z_{n}\}_{n=1}^{5}$ fall out of the instability region (cf. Figure \ref{F1:subfig2}), correspondingly, as shown in Table \ref{ta:3}-\ref{ta:4}, the the global error $e_{M}$ agrees with the expectation of the accuracy. On the other hand, due to points $z_{4}$ and $z_{5}$
outside the stability regions of $k=3$ (cf. Figure \ref{F1:subfig3}), the perturbation errors are magnified and accumulated significantly, which are shown in Table \ref{ta:3}-\ref{ta:4} as well. In Figure \ref{F1:subfig4}, $z_{n}$ are choose on the imaginary axis with imaginary number $\lambda$, according to Theorem \ref{th:Api}, all the $z_{n}$ belongs to the stability region of the method  methods \eqref{DkiTest} in the case of $k=1, 2$, and the error accuracy and the convergence order are obtained (cf. Table \ref{ta:3}). As a counter example, when $z_{3}$ doesn't belong to the stability region for $\alpha=0.98$ in Figure \ref{F1:subfig4}, the corresponding error $e_{M}$ shown in Table \ref{ta:4} can't ensure the desirable accuracy. In fact, it can be observed that for $k=3$, methods \eqref{DkiTest} don't possess $A(\frac{\pi}{2})$-stability when $\alpha$ tends to $1$, which appears to be predictable, since it is well known that BDF3 method for ODEs is not $A(\frac{\pi}{2})$-stable.

\begin{example}
	\label{ex:5}
	Consider the nonlinear equation 
	\begin{equation}\label{eq:4.2}
	\begin{cases}
	&{^C}D^{\alpha}u(t)=-u^{2}+f(t),  \hspace{0.3cm}t\in (0,1]  \\
	&u(0)=u_{0}
	\end{cases}
	\end{equation}
	with exact solution $u(t)=e^{\mu t}$ and source function 
	$f(t)=\mu t^{1-\alpha}E_{1,2-\alpha}(\mu t)+e^{2\mu t}$.
\end{example}
\begin{figure}[ht!]
\centering
\subfigure[$\alpha=0.1$]{
\includegraphics[scale=0.3]{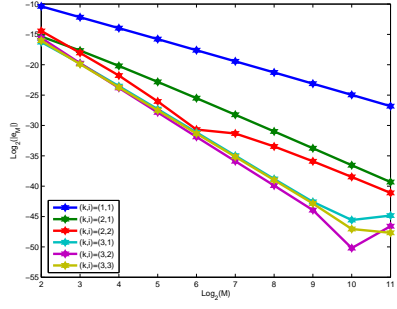}
    \label{F4:subfig1}
}
\subfigure[$\alpha=0.3$]{
\includegraphics[scale=0.3]{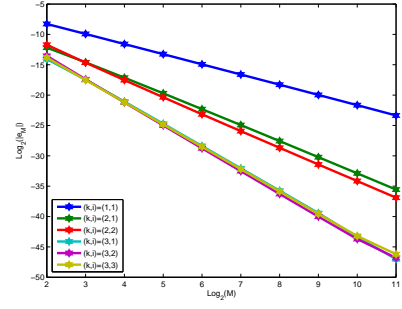}
    \label{F4:subfig2}
}
\subfigure[$\alpha=0.5$]{
\includegraphics[scale=0.3]{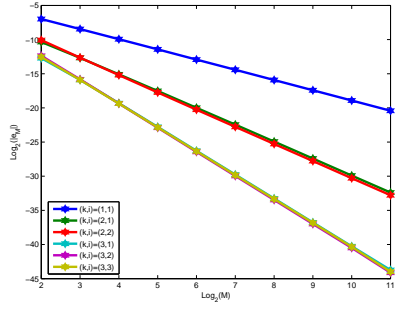}
\label{F4:subfig3}
}
\subfigure[$\alpha=0.7$]{
\includegraphics[scale=0.3]{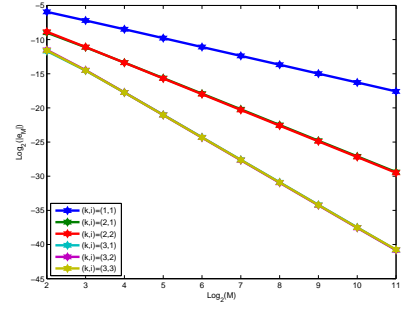}
\label{F4:subfig4}
}
\caption{Errors and convergence orders of $|e_{M}|$ for $\mu=-1$ in Example \ref{ex:5}}
\label{fig:F4}
\end{figure}

\begin{figure}[ht!]
\centering
\subfigure[$\alpha=0.1$]{
\includegraphics[scale=0.3]{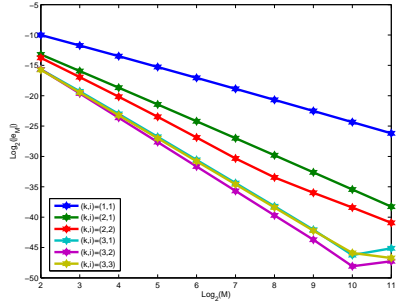}
    \label{F5:subfig1}
}
\subfigure[$\alpha=0.3$]{
\includegraphics[scale=0.3]{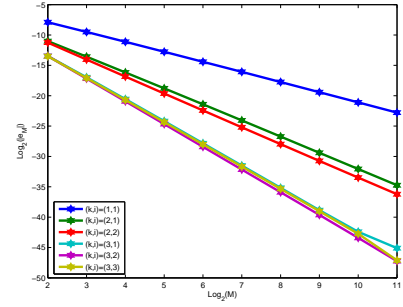}
    \label{F5:subfig2}
}
\subfigure[$\alpha=0.5$]{
\includegraphics[scale=0.3]{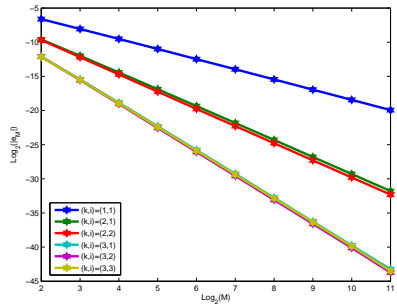}
\label{F5:subfig3}
}
\subfigure[$\alpha=0.7$]{
\includegraphics[scale=0.3]{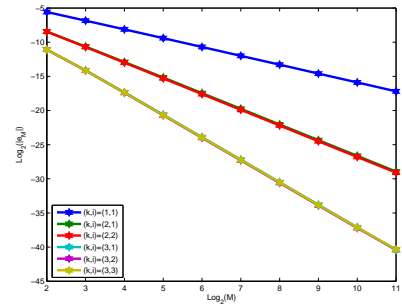}
\label{F5:subfig4}
}
\caption{Errors and convergence orders of $|e_{M}|$ for $\mu=i$ in Example \ref{ex:5}}
\label{fig:F5}
\end{figure}
Figure \ref{fig:F4} and \ref{fig:F5} plot the global error $|e_{M}|=|u(t_{M})-u_{M}|$ in Example \ref{ex:5} for different $\mu$ and $\alpha$, where $t_{M}=1$ is fixed and $\Delta t=1/M$ with $M=2^{j},~2\le j\le 11$. It is observed that $|e_{M}|=O(\Delta t^{k+1-\alpha})$ in the cases of $1\le i\le k\le 3$, when using discretisation formula \eqref{eq:nnonliode} in combination with Newton's method for the nonlinear equation behind the implicit method.

\section{Conclusions}\label{concl}
We have proposed a new higher-order approximation method for solving time-fractional initial value models of order $0<\alpha<1$. Furthermore, the local truncation error in terms of solution possessing sufficient smoothness is derived and demonstrated. Additionally, the stability and convergence analyses of the proposed method are provided in details, which also motivate the relevant research on applications to time-fractional partial differential equations.

\section*{Acknowledgements}
The authors would like to thank Jason Frank and WenYi Tian for helpful discussions.

\bibliographystyle{plain}
\bibliography{Reference4T}

\end{document}